\newtheorem{prethm}{{\bf Theorem}}[section]
\newenvironment{thm}{\begin{prethm}{\hspace{-0.5
em}{\bf.}}}{\end{prethm}}
\newtheorem{prepro}{{\bf Theorem}}
\newtheorem{precor}[prethm]{{\bf Corollary}}
\newenvironment{cor}{\begin{precor}{\hspace{-0.5
em}{\bf.}}}{\end{precor}}
\newtheorem{preconj}[prethm]{{\bf Conjecture}}
\newtheorem{preremark}[prethm]{{\bf Remark}}
\newenvironment{remark}{\begin{preremark}\em{\hspace{-0.5
em}{\bf.}}}{\end{preremark}}
\newtheorem{prelem}[prethm]{{\bf Lemma}}
\newenvironment{lem}{\begin{prelem}{\hspace{-0.5
em}{\bf.}}}{\end{prelem}}
\newtheorem{preque}[prethm]{{\bf Question}}
\newtheorem{preobserv}[prethm]{{\bf Observation}}
\newtheorem{predef}[prethm]{{\bf Definition}}
\newtheorem{preproposition}[prethm]{{\bf Proposition}}
\newenvironment{proposition}{\begin{preproposition}{\hspace{-0.5
em}{\bf.}}}{\end{preproposition}}
\newtheorem{preproof}{{\bf Proof.}}
\newtheorem{preprooff}{{\bf Proof}}
\newenvironment{proof}[1]{\begin{preproof}{\rm
#1}\hfill{$\Box$}}{\end{preproof}}
\newtheorem{preproofs}{{\bf The second proof of }}
\newtheorem{preprooft}{{\bf Third proof of }}
\newtheorem{preproofF}{{\bf Proof of}}
\title{\bf\Large 
Packing  spanning   rigid  subgraphs with restricted degrees
}
\author{{\normalsize{\sc Morteza Hasanvand${}$} }\vspace{3mm}
\\{\footnotesize{${}$\it Department of Mathematical
 Sciences, Sharif
University of Technology, Tehran, Iran}}
{\footnotesize{}}\\{\footnotesize{   $\mathsf{hasanvand@alum.sharif.edu  }$ }}}
\date{}
\def\rank   {\text{rank}}
\begin{document}
\maketitle
\begin{abstract}{
Let $G$ be a graph  and let $l$ be an  integer-valued  function on subsets of  $V(G)$. The graph $G$ is said to be  $l$-partition-connected, if for every partition $P$ of $V(G)$, $e_G(P)\ge \sum_{A\in P} l(A)-l(V(G))$, where $e_G(P)$ denotes the number of edges of $G$ joining different parts of $P$.  We say that $G$ is $l$-rigid,  if it contains a spanning $l$-partition-connected subgraph $H$ with $|E(H)|=\sum_{v\in V(H)} l(v)-l(V(H))$. In this paper, we   investigate  decomposition of  graphs into  spanning partition-connected and spanning rigid subgraphs. As a consequence, we improve a recent result due to Gu (2017) by  proving that every $(4kp-2p+2m)$-connected graph $G$ with  $k\ge 2$ has a spanning subgraph $H$ containing  a  packing  of $m$ spanning  trees and $p$ spanning  $(2k-1)$-edge-connected  subgraphs $H_1,\ldots, H_p$ such that for each vertex $v$, every  $H_i-v$ remains $(k-1)$-edge-connected and also  $d_H(v)\le \lceil \frac{d_G(v)}{2}\rceil +2kp-p+m$. From this result,  we  refine  a  result on arc-connected orientations of graphs.
\\
\\
\noindent {\small {\it Keywords}:
\\
Partition-connected;
rigid graph;
sparse graph;
supermodular;
edge-decomposition;
vertex degree.
}} {\small
}
\end{abstract}
%
%
%
%
%
%
%
%
%
%
%
%
%
\section{Introduction}
In this article, all graphs have  no  loop, but  multiple  edges are allowed and a simple graph is a graph without multiple edges.
 Let $G$ be a graph. 
The vertex set,  the edge set, and the minimum degree  of $G$ are denoted by $V(G)$, $E(G)$, and $\delta(G)$, respectively. 
The degree $d_G(v)$ of a vertex $v$ is the number of edges of $G$ incident to $v$.
For a set $X\subseteq V(G)$, we denote by $G[X]$ the induced  subgraph of $G$  with the vertex set $X$  containing
precisely those edges 
of $G$  whose ends lie in~$X$.
Let $A$ and $B$ be two subsets of $V(G)$.
This pair  is said to be {\bf intersecting}, if $A\cap B \neq \emptyset$.
Let $l$ be a real function on subsets of $V(G)$ with $l(\emptyset) =0$.
For notational simplicity, we write $l(G)$ for $l(V(G))$ and write $l(v)$ for $l(\{v\})$.
The function $l$ is said to be {\bf  supermodular}, if for all  vertex sets $A$ and $B$,
$l(A\cap B)+l(A\cup B)\ge l(A)+l(B)$.
Likewise, $l$ is said to be  $c$-intersecting supermodular,
 if for all  vertex sets $A$ and $B$  with $|A\cap B|\ge c$, the above-mentioned inequality holds.
When $c=1$, the set function $l$ is said to be  {\bf intersecting supermodular}.
The set function $l$ is called  (i) {\bf nonincreasing}, if  $l(A)\ge l(B)$, for all nonempty vertex sets $A,B$ with $A\subseteq B$,
(ii) {\bf subadditive}, if  $l(A) +l(B)\ge l(A\cup B)$, for any two disjoint   vertex sets $A$ and $B$,
and also  is called (iii) {\bf weakly subadditive}, if  $\sum_{v\in A}l(v)\ge l(A)$, for all  vertex sets $A$.
Note that several  results of  this paper can be hold for real functions $l$ such that $\sum_{v\in A}l(v)-l(A)$ is integer for every vertex set $A$.
For clarity of presentation, we  will  assume that $l$ is  integer-valued.
The graph $G$ is said to be {\bf $l$-edge-connected}, if for all nonempty proper vertex sets $A$,
 $d_G(A) \ge l(A)$, where
$d_G(A)$ denotes the number of edges of $G$ with exactly one end in $A$.
Likewise, the graph $G$ is called {\bf $l$-partition-connected}, 
if for every partition $P$ of $V(G)$,
$e_G(P)\ge \sum_{A\in P}l(A)-l(G),$
where $e_G(P)$  denotes the number of edges of $G$ joining different parts of $P$.
When $\mathcal{P}$ is an arbitrary collection of subsets of $V(G)$,
we denote by $e_G(\mathcal{P})$ the number of edges $e$ of $G$ such that  there is no a vertex set $A$ in $\mathcal{P}$ including both ends of $e$.
We say that a spanning  subgraph $F$  is {\bf $l$-sparse}, if for all vertex sets $A$,
$e_F(A)\le \sum_{v\in A} l(v)-l(A)$, where 
 $e_F(A)$ denotes  the number of edges of $F$ with both ends in $A$. 
Clearly, $1$-sparse graphs are forests.
Note that  all    maximal  spanning $l$-sparse  subgraphs  of $G$ form  the bases of a matroid, 
when $l$ is a $2$-intersecting supermodular weakly subadditive integer-valued  function on subsets of $V(G)$,
 see~\cite{MR0270945}.
Some basic tools in this paper for working with sparse  graphs can be obtained using matroid theory.
We say  that  $G$ is {\bf $l$-rigid}, 
if  it contains a spanning $l$-sparse subgraph $F$ with $|E(F)|=\sum_{v\in V(F)}l(v)-l(F)$.
It is easy to check that an $l$-rigid graph is also $l$-partition-connected.
It was shown that the converse is true, 
when $l$ is an intersecting supermodular weakly subadditive integer-valued function on subsets of $V(G)$~\cite{P}.
For convenience, we write the term `{\bf $k$-rigid}'  for  $l$-rigid, 
where $k$ is an integer and $l=l_{k,2k-1}$ 
where   $l_{m,n}$ denotes the set function that is $m$ on the vertices
 and is $n$  on the vertex sets with at least two vertices.
We say that the  graph $G$ is {\bf $\ell$-weakly $l$-connected},
if for any two disjoint vertex  sets $A$ and $B$ with $A\neq \emptyset$ and $A\cup B\subsetneq V(G)$, 
$d_{G-B}(A) \ge l(A\cup B)-\sum_{v\in B} \ell(v),$
where $\ell$  is a real function on subsets of $V(G)$.
When $G$ is $1$-weakly $l$-connected,  $G$  is said to be  {\bf $l$-connected}.
For every vertex set $A$ of a directed graph $G$,  we  denote  by $d^-_G(A)$  the number of edges   entering $A$ and denote by  $d^+_G(A)$ the number of edges  leaving  $A$.
An orientation of $G$ is called {\bf  $l$-arc-connected}, if for every vertex set $A$, $d_G^-(A)\ge l(A)$.
Likewise, an orientation of $G$ is called {\bf $r$-rooted $l$-arc-connected},
 if for every vertex set $A$, $d_G^-(A)\ge l(A)-\sum_{v\in A}r(v)$, where
 $r$ is a nonnegative integer-valued function on $V(G)$ with $l(G)=\sum_{v\in V(G)}r(v)$.
An orientation of $G$ is said to be {\bf smooth},
 if  for each vertex $v$,  $|d^+_G(v)-d^-_G(v)|\le 1$.
A {\bf packing} refers to a collection  of pairwise
edge-disjoint subgraphs.
Throughout this article, all set functions are zero on the empty set,  all  variables $k$, $p$, and $m$ are integer and nonnegative ($k$ is positive), unless otherwise  stated.
%
%
%
%
%
%
%
%
%
%
%
%
%

In 1961 Nash-Williams and Tutte obtained  a necessary and sufficient condition  for a graph to have  $m$ edge-disjoint spanning trees
 which contains the following result as a corollary.
\begin{thm}{\rm (\cite{MR0133253, MR0140438})}
{Every $2m$-edge-connected graph contains $m$ edge-disjoint spanning trees.
}\end{thm}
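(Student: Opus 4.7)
The plan is to deduce this directly from the full Nash--Williams/Tutte characterization of graphs with $m$ edge-disjoint spanning trees, which asserts that $G$ has such a packing if and only if for every partition $P$ of $V(G)$ one has $e_G(P)\ge m(|P|-1)$. Thus it suffices to verify this partition inequality under the hypothesis that $G$ is $2m$-edge-connected.

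To this end, let $P=\{V_1,\dots,V_t\}$ be an arbitrary partition of $V(G)$. If $t=1$ the required inequality is $e_G(P)\ge 0$, which is trivial. If $t\ge 2$, then each $V_i$ is a nonempty proper subset of $V(G)$, so the edge-connectivity hypothesis gives $d_G(V_i)\ge 2m$ for every $i$. Summing over $i$ and using the elementary identity $\sum_{i=1}^t d_G(V_i)=2\,e_G(P)$ (each crossing edge is counted from both of its endpoints' parts) yields
$$
2\,e_G(P)\;=\;\sum_{i=1}^t d_G(V_i)\;\ge\;2mt,
$$
so $e_G(P)\ge mt\ge m(t-1)$, which is exactly the Nash--Williams/Tutte partition condition.

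Applying the characterization theorem then delivers a packing of $m$ edge-disjoint spanning trees in $G$. In the language of the present paper, one may equivalently observe that taking $l$ to be the constant function $m$ on every nonempty vertex set turns the required inequality into the definition of $l$-partition-connectedness, and the double-counting step above is precisely the passage from edge-connectivity to partition-connectedness.

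The only potential obstacle is invoking the nontrivial Nash--Williams/Tutte theorem itself; once that is available, the argument collapses to the one-line double-counting bound, and there is essentially no further difficulty. The proof therefore has no genuine technical hurdle beyond quoting the packing theorem.
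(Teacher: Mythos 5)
Your proof is correct and takes essentially the same route the paper intends: the paper presents this statement precisely as a corollary of the Nash--Williams/Tutte partition characterization, and your double-counting verification that $2m$-edge-connectivity forces $e_G(P)\ge m(|P|-1)$ for every partition $P$ is the standard, correct way to fill in that deduction.
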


In 1982  Lov\'asz and Yemini~\cite{MR644960} showed that every $6$-connected graph  is $2$-rigid 
 and constructed a $5$-connected graph with   no   spanning minimally $2$-rigid  subgraphs.  
In 2005 Jord\'an~\cite{MR2171365}  extended  this result to a packing version by proving that
every  $6p$-connected graph has $p$ edge-disjoint spanning  $2$-rigid subgraphs.
In 2014 Cheriyan, Durand~de Gevigney, and Szigeti  established  the following  generalized version.
\begin{thm}{\rm (\cite{MR3171780})}\label{thm:Cheriyan,Gevigney,Szigeti}
{Every $(6p+2m)$-connected graph has a packing of $m$ spanning trees and  $p$ spanning $2$-rigid subgraphs.
}\end{thm}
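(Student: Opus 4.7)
The plan is to apply the matroid union theorem to the graphic matroid $M_g$ of $G$ (whose bases are the spanning trees) together with the two-dimensional generic rigidity matroid $M_r$ of $G$ (the $(2,3)$-count matroid, whose bases are the edge sets of the minimally $2$-rigid spanning subgraphs). A packing of $m$ spanning trees and $p$ spanning $2$-rigid subgraphs exists in $G$ if and only if the matroid union of $m$ copies of $M_g$ and $p$ copies of $M_r$ has a base of size $m(n-1)+p(2n-3)$, where $n=|V(G)|$. By the Edmonds--Nash-Williams matroid union theorem, this is equivalent to
\[ |E(G)\setminus F|+m\cdot r_g(F)+p\cdot r_r(F)\ \ge\ m(n-1)+p(2n-3)\qquad\text{for every }F\subseteq E(G). \]

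Next, I would invoke explicit rank formulas: $r_g(F)=n-c(F)$, where $c(F)$ is the number of connected components of $(V(G),F)$, and Lov\'asz's formula $r_r(F)=\min_{\mathcal{C}}\sum_{A\in\mathcal{C}}(2|A|-3)$, where $\mathcal{C}$ ranges over covers of $V(F)$ by vertex subsets of size at least $2$ whose induced edges collectively contain $F$. For a candidate extremal $F$, an uncrossing argument reduces the analysis to the case where $F$ is a flat of $mM_g\vee pM_r$; such an $F$ turns out to be the set of edges induced by a partition $P=\{A_1,\ldots,A_t\}$ of $V(G)$ whose parts are themselves rigidly induced. In that situation the matroid-union inequality reduces to the partition-type condition
\[ e_G(P)\ \ge\ m(t-1)+p(3t-s-3), \]
where $s$ denotes the number of singleton parts.

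Finally, I would verify this partition inequality using the $(6p+2m)$-connectivity of $G$. For $t\ge 2$ and each $A_i\subsetneq V(G)$ we have $d_G(A_i)\ge 6p+2m$, and summing yields $2e_G(P)=\sum_i d_G(A_i)\ge(6p+2m)t$, hence $e_G(P)\ge(3p+m)t$. A direct computation shows that $(3p+m)t$ exceeds $m(t-1)+p(3t-s-3)$ by the nonnegative quantity $m+3p+ps$. The main obstacle is the middle step: because the rigidity-matroid rank is a minimum over covers rather than partitions, the reduction to a clean partition inequality requires a careful uncrossing of the cover $\mathcal{C}$ witnessing $r_r(F)$. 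Small blocks (of size $2$ or $3$) are delicate here, and \emph{vertex}-connectivity---not merely edge-connectivity---is essential in order to rule out small vertex-cuts that would otherwise allow a non-partition cover to defeat the counting. This case analysis of small rigid blocks, building on the Lov\'asz--Yemini and Jord\'an arguments, is the technical heart of the proof.
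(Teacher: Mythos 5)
Your overall framework (matroid union of $m$ copies of the graphic matroid and $p$ copies of the $(2,3)$-count matroid, Edmonds' rank formula, Lov\'asz--Yemini's rank formula for the rigidity matroid) is the right skeleton and is essentially the route of the cited source; note that the paper itself does not prove this statement but quotes it from \cite{MR3171780} and instead derives stronger versions (Theorem 1.5 and the results of Sections 6 and 9) from a supermodular counting argument over the maximal $\ell$-rigid subgraphs of a maximum sparse subgraph. The genuine gap in your proposal is the claimed ``reduction to a clean partition inequality.'' The extremal sets $F$ for the rigidity-matroid rank are unions of rigid blocks that may pairwise share a single vertex (two rigid blocks glued at a vertex already give a closed set whose witnessing cover is not a partition and cannot be uncrossed into one), so the minimizer in Edmonds' formula is in general a \emph{cover} with singleton overlaps, not a partition of $V(G)$. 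Consequently the inequality $e_G(P)\ge m(t-1)+p(3t-s-3)$ that you verify is not the inequality that needs to be established, and your verification uses only $2e_G(P)=\sum_i d_G(A_i)$, i.e.\ only edge-connectivity --- but the statement is false under edge-connectivity alone (e.g.\ two large rigid graphs sharing one vertex are highly edge-connected yet not rigid). The actual work is to bound $\sum_{X}\bigl(2|X|-3\bigr)$ over a cover whose blocks overlap, which forces you to split each shared vertex's degree among the blocks containing it and to bound, for each block $X$ with shared part $X_B$ and private part $X_A$, the quantity $d_{G-X_B}(X_A)$ from below using \emph{vertex}-connectivity. This is exactly the bookkeeping the paper builds into the hypothesis $d_{G-B}(A)\ge 2\ell(A\cup B)-\sum_{v\in B}\ell(v)+\cdots$ of Theorem~\ref{thm:partition-connected-rigid} and into the $X_A/X_B$ decomposition in its proof. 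You correctly sense that this is ``the technical heart,'' but you have placed it inside a partition reduction that does not exist; as written, the counting you actually carry out proves nothing beyond what edge-connectivity gives, and the step that uses the hypothesis of the theorem is missing.
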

Recently, Gu (2017)  formulated  the following extension of Theorem~\ref{thm:Cheriyan,Gevigney,Szigeti}
 and  used  it to refine a result on arc-connected orientation of graphs.
\begin{thm}{\rm (\cite{MR3619513})}\label{thm:Gu}
{Every $(4kp-2p+2m)$-connected graph with $k\ge 2$ has a packing of $m$ spanning trees 
and $p$  
 spanning 
$k$-rigid 
subgraphs.
}\end{thm}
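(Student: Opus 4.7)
The plan is to reduce the packing problem to a Nash-Williams--Tutte type partition inequality via matroid union, and then verify that inequality from the vertex-connectivity hypothesis.

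First, I would observe that a spanning $k$-rigid subgraph of $G$ is exactly a spanning subgraph whose edge set is a base of the $(k,2k-1)$-count matroid $M_{k,2k-1}$ on $E(G)$, and a spanning tree is a base of the cycle matroid. Applying Edmonds' matroid union theorem to $p$ copies of $M_{k,2k-1}$ together with $m$ copies of the cycle matroid, and using the minimum-cover (Lorea) formula for the rank function of the count matroid to rewrite the resulting subset inequality as a partition inequality, the packing exists if and only if for every partition $P=\{A_1,\dots,A_q\}$ of $V(G)$,
\[
e_G(P) \;\ge\; p\Big(\sum_{i=1}^{q} l_{k,2k-1}(A_i) - l_{k,2k-1}(V(G))\Big) \;+\; m(q-1).
\]

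Next, I would verify this partition inequality using the connectivity hypothesis. Since $G$ is $(4kp-2p+2m)$-connected, hence $(4kp-2p+2m)$-edge-connected, every nontrivial $A_i$ satisfies $d_G(A_i)\ge 4kp-2p+2m$, so
\[
e_G(P) \;=\; \tfrac{1}{2}\sum_{i=1}^{q} d_G(A_i) \;\ge\; q(2kp-p+m).
\]
Writing $s$ for the number of singleton parts of $P$ and $t=q-s$ for the rest, one has $l_{k,2k-1}(V(G))=2k-1$ and $\sum_i l_{k,2k-1}(A_i)=sk+t(2k-1)$ when $|V(G)|\ge 2$, so the required inequality becomes
\[
q(2kp-p+m) \;\ge\; p\big(sk+(t-1)(2k-1)\big)+m(q-1),
\]
which after expansion collapses to $sp(k-1)+p(2k-1)+m\ge 0$, trivially true; the degenerate cases $q\le 1$ or $|V(G)|=1$ are immediate.

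The main obstacle is the reduction in the first step. For $k\ge 2$ the set function $l_{k,2k-1}$ fails to be intersecting supermodular (as the excerpt points out), so the clean equivalence between $l$-partition-connectedness and $l$-rigidity breaks down. Consequently one must work directly with the count matroid $M_{k,2k-1}$, converting the edge-subset condition supplied by matroid union into the partition form above by means of the rank-cover expression and a careful uncrossing argument for overlapping tight sets. This is exactly the range in which $(k,2k-1)$ defines a matroid and the cover formula is available; once the reduction is in place the cut bound and the arithmetic are routine, which is also reflected in the fact that plugging $k=2$, $p=1$, $m=0$ recovers the Lov\'asz--Yemini bound of $6$.
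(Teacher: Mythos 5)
There is a genuine gap, and it sits exactly where you placed the ``main obstacle'': the reduction in your first step is not merely delicate, it is false as stated. Edmonds' matroid union theorem gives a condition quantified over \emph{edge subsets} $F\subseteq E(G)$ involving the rank $r_{k,2k-1}(F)$, and for $k\ge 2$ the minimizing covers in the rank formula for the count matroid $M_{k,2k-1}$ are families of tight sets that may overlap in single vertices; they cannot be uncrossed into a partition precisely because $l_{k,2k-1}$ is not intersecting supermodular. Consequently the ``if and only if'' with the partition inequality fails in the direction you need. Concretely, take $k=2$, $p=1$, $m=0$ and let $G$ be two copies of $K_n$ ($n$ large) glued at one vertex: $G$ has three edge-disjoint spanning trees, hence is $3$-partition-connected, hence satisfies $e_G(P)\ge \sum_{A\in P} l_{2,3}(A)-3$ for every partition $P$; yet any spanning $l_{2,3}$-sparse subgraph has at most $2(2n-3)=4n-6<2(2n-1)-3$ edges, so $G$ has no spanning $2$-rigid subgraph. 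This also shows that your second step cannot be repaired within your framework: the only consequence of the hypothesis you ever invoke is the edge-connectivity bound $d_G(A_i)\ge 4kp-2p+2m$, and the example above has edge-connectivity $n-1$, which can be made arbitrarily large. Vertex-connectivity must enter through quantities of the form $d_{G-B}(A)$, where $B$ is the set of vertices shared by several tight sets of the cover.

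That overlap analysis is the entire content of the theorem, not a side issue. The paper's route (which yields a strengthening of the statement) is: Theorem~\ref{thm:generalized:D} produces, from a maximum packing of sparse subgraphs, a cover $\mathcal{P}$ of $V(G)$ by vertex sets of maximal rigid subgraphs; each $X\in\mathcal{P}$ is split into its ``private'' part $X_A$ and its ``shared'' part $X_B$, and the connectivity hypothesis is applied as a lower bound on $d_{G-X_B}(X_A)$ (Theorems~\ref{thm:sufficient:rigid} and~\ref{thm:partition-connected-rigid}); the inequality $\sum_{X}\sum_{v\in X}\ell(v)-\frac12\sum_X\sum_{v\in X_B}\ell(v)\ge\sum_{v\in V(G)}\ell(v)$ is what absorbs the double-counting caused by overlaps. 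Finally, Corollary~\ref{cor:partition-connected-rigid:degrees} is applied with $\ell=p\,\ell_{k,2k-1}$ and the single $p\ell_{k,2k-1}$-rigid subgraph is split into $p$ spanning $k$-rigid subgraphs by Theorem~\ref{thm:rigid:decomposition}, with the $m$ spanning trees coming from Theorem~\ref{thm:main:partition-connected}. Your matroid-union framing and the closing arithmetic are fine as far as they go, but without an argument that converts the subset/cover condition into something verifiable from \emph{vertex}-connectivity, the proof does not go through.
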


In this paper, we  generalize and improve the above-mentioned  theorem to the following  supermodular version.
From this result,  we improve Theorem 1.4 in \cite{MR3619513} as mentioned in the abstract and 
 also refine the result of Gu (2017) on arc-connected orientations of graphs.
Moreover, we  investigate  spanning rigid subgraphs with small degrees on independent sets  and derive 
 that every $6k$-connected bipartite graph $G$ with one partite set  $A$ and $k\ge 1$ has a spanning $2$-connected subgraph $H$ such that for each $v\in A$,
$d_H(v)\le \lceil d_G(v)/k\rceil +2.$
\begin{thm}
{Let $G$ be a simple graph, let $l$ be a nonincreasing  intersecting supermodular  nonnegative   integer-valued function on subsets of $V(G)$, and let $p$ and $k$ be two positive  integers with $k\ge 2$.
If $G$ is  $(4kp-2p+2l)$-connected,
then it has a packing of a spanning $l$-partition-connected  subgraph  and $p$ spanning $k$-rigid  subgraphs.
}\end{thm}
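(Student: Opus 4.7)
My plan is to reduce the theorem to a partition-style sufficient condition for the packing (which should be established earlier in the paper via matroid union) and then verify this condition using the $(4kp-2p+2l)$-connectivity. I would invoke the following sufficient condition: $G$ admits a packing of a spanning $l$-partition-connected subgraph and $p$ spanning $k$-rigid subgraphs provided that, for every partition $P=\{A_1,\dots,A_r\}$ of $V(G)$ with $r\ge 2$,
\[
e_G(P)\ \ge\ \Big(\sum_{i=1}^{r}l(A_i)-l(V(G))\Big)+p\Big(\sum_{i=1}^{r}l_{k,2k-1}(A_i)-(2k-1)\Big).
\]
This is the cut form of Edmonds' matroid-union inequality for the two matroids involved: the $l$-sparse matroid (which exists since $l$ is intersecting supermodular and, being nonincreasing and nonnegative, is automatically weakly subadditive, so \cite{MR0270945} applies) and the count matroid of $l_{k,2k-1}$-sparse subgraphs. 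The nonincreasing, intersecting supermodular hypotheses on $l$ are also precisely what identify spanning $l$-rigid subgraphs with spanning $l$-partition-connected subgraphs, so that the first member of the packing can be viewed both ways.

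Next, I would verify the partition inequality. Taking the deleted vertex set to be empty in the definition of $l$-connectedness, applied to the function $4kp-2p+2l$, yields $d_G(A)\ge 4kp-2p+2l(A)$ for every nonempty proper $A\subset V(G)$. Hence, using $2kp-p=p(2k-1)$, for any partition $P=\{A_1,\dots,A_r\}$ with $r\ge 2$,
\[
e_G(P)=\tfrac{1}{2}\sum_{i=1}^{r}d_G(A_i)\ \ge\ rp(2k-1)+\sum_{i=1}^{r}l(A_i).
\]
Writing $r_1$ for the number of singleton parts of $P$, the identity $\sum_{i}l_{k,2k-1}(A_i)=kr_1+(2k-1)(r-r_1)=(2k-1)r-(k-1)r_1$ shows that the right-hand side of the target inequality equals
\[
\sum_{i=1}^{r}l(A_i)-l(V(G))+p(2k-1)(r-1)-p(k-1)r_1.
\]
Subtracting, the $r$-dependent term $rp(2k-1)$ cancels exactly, leaving the surplus $l(V(G))+p(2k-1)+p(k-1)r_1$, which is nonnegative since $l\ge 0$, $p\ge 1$, and $k\ge 2$. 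Thus the partition inequality holds and the sufficient condition delivers the desired packing.

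The main technical obstacle is the partition-style sufficient condition itself. The raw matroid-union inequality quantifies over arbitrary $F\subseteq E(G)$, and reducing it to the partition form used above demands an uncrossing argument that simultaneously uses the intersecting supermodularity of $l$ and the $2$-intersecting supermodularity of $l_{k,2k-1}$ (together with nonincreasing, which is what equates $l$-rigid with $l$-partition-connected for spanning subgraphs). Once this reduction is in hand as an earlier theorem of the paper, the verification above is a short, tight partition calculation; the modest surplus $l(V(G))+p(2k-1)+p(k-1)r_1$ indicates that the connectivity threshold $4kp-2p+2l$ is essentially best possible among connectivity hypotheses of this shape, which is why it matches the constants in Theorem~\ref{thm:Gu}.
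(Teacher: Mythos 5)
Your verification step is clean arithmetic, but the foundation it rests on is not available and cannot be made available: there is no partition-form sufficient condition for packing spanning $k$-rigid subgraphs. The set function $l_{k,2k-1}$ is only $2$-intersecting supermodular (for $|A\cap B|=1$ and $|A|,|B|\ge 2$ one has $l(A\cap B)+l(A\cup B)=3k-1<4k-2=l(A)+l(B)$ when $k\ge 2$), so the rank of the associated count matroid is given by a minimum over \emph{covers} of the edge set by vertex sets that may overlap in vertices (the Lov\'asz--Yemini type formula), not over partitions of $V(G)$. This is exactly the distinction the paper draws between $\ell$-partition-connected and $\ell$-rigid: for intersecting supermodular $l$ the two coincide, but for $l_{k,2k-1}$ they do not, and your displayed inequality specialized to $p=1$, $l\equiv 0$ would assert that every $l_{k,2k-1}$-partition-connected graph contains a spanning $k$-rigid subgraph, which is false. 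Reducing the Edmonds union condition to your partition inequality is therefore not an ``uncrossing argument'' that can succeed; the overlapping vertices of a cover are unavoidable and are precisely what produce terms of the form $d_{G-B}(A)-\textrm{(corrections over }B\textrm{)}$ in the paper's actual sufficient conditions (Theorems~\ref{thm:sufficient:rigid} and~\ref{thm:partition-connected-rigid}).

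A symptom of the gap is that your verification only ever uses the case $B=\emptyset$ of the hypothesis, i.e.\ only the edge-connectivity consequence $d_G(A)\ge 4kp-2p+2l(A)$. But rigidity is not an edge-connectivity phenomenon --- this is why Lov\'asz--Yemini, Jord\'an, Cheriyan--de~Gevigney--Szigeti, and Gu all require vertex connectivity --- and the paper's proof genuinely needs the nonempty-$B$ instances: it applies Theorem~\ref{thm:partition-connected-rigid} with $\ell=p\,\ell_{k,2k-1}$, whose hypothesis $d_{G-B}(A)\ge 2p(2k-1)-pk|B|+2l(A\cup B)$ follows from $(4kp-2p+2l)$-connectedness exactly because $pk|B|\ge|B|$, and then splits the resulting spanning $p\ell_{k,2k-1}$-rigid subgraph into $p$ spanning $k$-rigid subgraphs via Theorem~\ref{thm:rigid:decomposition}. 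To repair your argument you would have to replace the partition condition by a cover/deleted-vertex condition of that type, at which point you are reconstructing the paper's route rather than shortcutting it.
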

%
%
%
%
%
%
%
%
%
%
%
%
%
%
%
%
%
\section{Basic tools}
In this section, we present some basic tools for working with sparse  and rigid graphs.
The first one shows that  minimal and maximal rigid subgraphs containing  two given vertices are unique, 
when  the original graph is  sparse and $c\le 2$.
In particular, maximal rigid subgraphs are edge-disjoint.
\begin{proposition}
{Let $F$ be an $\ell$-sparse  graph, where $\ell$ is a $c$-intersecting supermodular  weakly subadditive integer-valued function on 
subsets of $V(F)$. If   $F[A]$ and $F[B]$ are  two  $\ell$-rigid subgraphs and 
 $|A\cap B|\ge c$, then 
 both of graphs $F[A\cup B]$ and $F[A\cap B]$ are $\ell$-rigid.
}\end{proposition}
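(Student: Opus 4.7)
The plan is to chain together three elementary observations---a counting inequality for edges, additivity of vertex sums, and the $c$-intersecting supermodularity of $\ell$---to force the $\ell$-sparsity bounds on $A\cup B$ and $A\cap B$ to hold with equality, which is exactly what rigidity requires. Since $F$ is $\ell$-sparse, both $F[A\cup B]$ and $F[A\cap B]$ automatically satisfy the inequality side of sparsity, and since $F[A]$, $F[B]$ are $\ell$-rigid subgraphs of $F$, they satisfy $e_F(A)=\sum_{v\in A}\ell(v)-\ell(A)$ and similarly for $B$.

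The first step is the standard combinatorial observation that
\[
e_F(A)+e_F(B)\le e_F(A\cup B)+e_F(A\cap B),
\]
obtained by checking edge-by-edge which of the four terms each edge contributes to; the slack comes exactly from edges joining $A\setminus B$ to $B\setminus A$, which are counted on the right but not on the left. The second step uses the trivial identity
\[
\sum_{v\in A}\ell(v)+\sum_{v\in B}\ell(v)=\sum_{v\in A\cup B}\ell(v)+\sum_{v\in A\cap B}\ell(v).
\]
The third step uses $|A\cap B|\ge c$ together with $c$-intersecting supermodularity of $\ell$ to obtain $\ell(A)+\ell(B)\le \ell(A\cup B)+\ell(A\cap B)$.

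Combining these, I would compute
\[
e_F(A\cup B)+e_F(A\cap B)\le \Bigl[\sum_{v\in A\cup B}\ell(v)-\ell(A\cup B)\Bigr]+\Bigl[\sum_{v\in A\cap B}\ell(v)-\ell(A\cap B)\Bigr]
\]
from sparsity, rewrite the right-hand side via the additive identity and then apply supermodularity to bound it from above by $[\sum_{v\in A}\ell(v)-\ell(A)]+[\sum_{v\in B}\ell(v)-\ell(B)]=e_F(A)+e_F(B)$, using the rigidity hypothesis in the last equality. Together with the first-step inequality $e_F(A)+e_F(B)\le e_F(A\cup B)+e_F(A\cap B)$ going in the opposite direction, the whole chain collapses to equalities. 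In particular, the sparsity bounds on $A\cup B$ and on $A\cap B$ are individually tight, which is precisely the definition of $F[A\cup B]$ and $F[A\cap B]$ being $\ell$-rigid.

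The proof is essentially a bookkeeping exercise, so I do not expect a serious obstacle; the only thing to be careful about is verifying that supermodularity is actually applicable (hence the hypothesis $|A\cap B|\ge c$) and that weak subadditivity is not needed here except insofar as it guarantees the sparsity bounds $\sum_{v\in X}\ell(v)-\ell(X)$ are well-defined nonnegative integers. A minor point worth recording is that the argument simultaneously shows there is no edge of $F$ between $A\setminus B$ and $B\setminus A$, since that is where the slack in the first inequality lives; this observation is not needed for the stated conclusion but often comes for free and can be useful in applications.
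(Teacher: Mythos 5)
Your proof is correct and is essentially the same argument as the paper's: both rest on the edge-count inequality $e_F(A)+e_F(B)\le e_F(A\cup B)+e_F(A\cap B)$, the sparsity upper bounds on $A\cup B$ and $A\cap B$, and the $c$-intersecting supermodularity of $\ell$ (applicable since $|A\cap B|\ge c$), squeezed into a chain that collapses to equalities. The paper merely arranges the same inequalities as a chain of lower bounds on $e_F(A\cup B)$ rather than your symmetric sum, and your side remark that no edge of $F$ joins $A\setminus B$ to $B\setminus A$ is a correct bonus not stated in the paper.
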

\begin{proof}
{Since $F$ is $\ell$-sparse, we must have 
$e_{F}(A\cap B) \le \sum_{v\in A\cap B}\ell(v)-\ell(A\cap B),$
which can conclude   that
$$e_{F}(A\cup B) \ge e_F(A)+e_F(B)-e_F(A\cap B) \ge 
 \sum_{v\in A}\ell(v)-\ell(A)+\sum_{v\in B}\ell(v)-\ell(B)-\sum_{v\in A\cap B}\ell(v)+\ell(A\cap B).$$
According to the assumption,  $\ell$ is supermodular on $A$ and $B$, and so
$$e_{F}(A\cup B)\ge \sum_{v\in A\cup B}\ell(v)+\ell(A\cap B)-\ell(A)-\ell(B)
 \ge
 \sum_{v\in A\cup B}\ell(v)-\ell(A\cup B).$$
Therefore, the equalities must be hold, which can imply that both of graphs $F[A\cup B]$ and  $F[A\cap B]$ are $\ell$-rigid. 
Hence  the proposition holds.
}\end{proof}
The next proposition 
is a useful tool for finding a pair of edges such that replacing them preserves
sparse property of a given spanning sparse subgraph.
\begin{proposition}\label{prop:xGy-exchange}
{Let $G$ be a graph and let $\ell$ be a $2$-intersecting supermodular weakly  subadditive integer-valued function on 
subsets of $V(G)$.
If $F$ is a spanning  $l$-sparse  subgraph  of $G$,  $xy\in E(G)\setminus E(F)$, and 
$Q$ is an
 $\ell$-rigid subgraph of $F$ including $x$ and $y$ with the minimum number of vertices,
 then for every  $e \in E(Q)$, the resulting graph $F-e+xy$ remains $\ell$-sparse. 
}\end{proposition}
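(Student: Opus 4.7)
The plan is to argue by contradiction. I would set $F' := F - e + xy$ and suppose some vertex set $A$ witnesses a failure of sparsity, i.e.\ $e_{F'}(A) > \sum_{v\in A}\ell(v) - \ell(A)$. Since $F$ itself is $\ell$-sparse we have $e_F(A) \le \sum_{v\in A}\ell(v) - \ell(A)$, so the one-edge exchange from $F$ to $F'$ must have strictly increased the count on $A$. Because only one edge is added and one edge removed, this forces $\{x,y\} \subseteq A$ (so the new edge $xy$ is internal to $A$) while the deleted edge $e$ must not be internal to $A$. Integrality then pins down $e_{F'}(A) = e_F(A)+1$ and the sparsity bound $e_F(A) = \sum_{v\in A}\ell(v) - \ell(A)$, so $F[A]$ is itself $\ell$-rigid.

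Now I would play the two rigid subgraphs $F[A]$ and $Q$ against each other. The latter is an $\ell$-rigid subgraph of $F$; since $F$ is $\ell$-sparse and $|E(Q)|=\sum_{v\in V(Q)}\ell(v)-\ell(V(Q))$, the edge counts of $Q$ and of $F[V(Q)]$ match, and hence $Q = F[V(Q)]$ is actually an induced subgraph of $F$. Both $F[A]$ and $F[V(Q)]$ contain $x$ and $y$, so $|A \cap V(Q)| \ge 2 = c$ and Proposition~2.1 applies, yielding that $F[A\cap V(Q)]$ is $\ell$-rigid. This is again an $\ell$-rigid subgraph of $F$ containing $x$ and $y$, so the minimality of $Q$ forces $V(Q) \subseteq A$.

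To finish, I would note that $e \in E(Q) = E(F[V(Q)])$ has both endpoints inside $V(Q)$, and therefore inside $A$. But this flatly contradicts the earlier conclusion that $e$ is not internal to $A$, so no violating set $A$ can exist and $F'$ is $\ell$-sparse.

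The core of the argument is the opening bookkeeping step that simultaneously extracts three facts from a single inequality: the position of the new edge, the position of the deleted edge, and the tightness that upgrades $F[A]$ from merely sparse to $\ell$-rigid. Once these are in place, the rest is a clean exchange between $F[A]$ and $Q$ using Proposition~2.1 together with the minimality hypothesis, and I do not foresee any further obstacle; weak subadditivity is only used implicitly to ensure the $\ell$-rigidity counts are nonnegative and that Proposition~2.1 is applicable.
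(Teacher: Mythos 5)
Your proposal is correct and follows essentially the same route as the paper's own proof: assume a violating set $A$, deduce that $x,y\in A$, that $e$ is not internal to $A$, and that $F[A]$ is $\ell$-rigid, then use minimality of $Q$ (via the intersection property of rigid subgraphs) to force $V(Q)\subseteq A$ and reach a contradiction. Your write-up actually spells out more carefully the step the paper leaves implicit, namely the appeal to Proposition~2.1 applied to $F[A]$ and $F[V(Q)]$.
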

\begin{proof}
{Suppose, by way of contradiction, that there is an edge $uv$ such that $F'=F-uv+xy$  is not $\ell$-sparse so that there is a vertex set $A$ with $e_{F'}(A)\ge \sum_{v\in A}\ell(v)-\ell(A)$.
Since $e_{F}(A)\le  \sum_{v\in A}\ell(v)-\ell(A)$, we must have $x,y \in A$, and  $A\setminus  \{u,v\}\neq \emptyset$, and also $e_{F}(A)=  \sum_{v\in A}\ell(v)-\ell(A)$. In other words, the graph $F[A]$ is $\ell$-rigid. 
Since $|V(Q)|$ is minimal and $A$ includes $x$ and $y$, one can conclude that $ V(Q)\subseteq A$.
This implies that $u,v\in  A$, which is a contradiction.
}\end{proof}
\begin{proposition}\label{prop:Q1Q2e}
{Let $F$ be an $\ell$-spars graph with $x, y \in V(F)$, where   $\ell$ is  a $2$-intersecting supermodular weakly  subadditive integer-valued function on  subsets of $V(F)$.
Let $F[A]$ be an $\ell$-rigid subgraph with the minimum number of vertices including $x$ and $y$.
If $xy\notin E(F)$ and  $B$ is  a vertex set including $x$ and $y$ such that  $F[B]+xy$ is $\ell$-rigid,
then   the graph $F[A\cup B]$ must be $\ell$-rigid.
}\end{proposition}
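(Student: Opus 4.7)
My plan is to sandwich $e_F(A\cup B)$ between its $\ell$-sparseness upper bound and a matching lower bound derived from the hypotheses, and then rule out the off-by-one possibility using the minimality of $|A|$. Since $F$ is $\ell$-sparse, one immediately has
\[
e_F(A\cup B)\le \sum_{v\in A\cup B}\ell(v)-\ell(A\cup B),
\]
and the target is to establish equality.

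For the lower bound I would start from the standard inclusion-exclusion inequality $e_F(A\cup B)\ge e_F(A)+e_F(B)-e_F(A\cap B)$ and substitute: (i) $e_F(A)=\sum_{v\in A}\ell(v)-\ell(A)$ from the rigidity of $F[A]$; (ii) $e_F(B)\ge \sum_{v\in B}\ell(v)-\ell(B)-1$, which follows because $F[B]+xy$ is $\ell$-rigid and so contains a spanning $\ell$-sparse subgraph with $\sum_{v\in B}\ell(v)-\ell(B)$ edges; (iii) $e_F(A\cap B)\le \sum_{v\in A\cap B}\ell(v)-\ell(A\cap B)$ from the $\ell$-sparseness of $F$; and (iv) the $2$-intersecting supermodular inequality $\ell(A)+\ell(B)\le \ell(A\cup B)+\ell(A\cap B)$, which is available because $\{x,y\}\subseteq A\cap B$ forces $|A\cap B|\ge 2$. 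Chaining these yields
\[
e_F(A\cup B)\ge \sum_{v\in A\cup B}\ell(v)-\ell(A\cup B)-1,
\]
so the integer $e_F(A\cup B)$ is either $\sum_{v\in A\cup B}\ell(v)-\ell(A\cup B)$ (which is exactly the $\ell$-rigidity of $F[A\cup B]$) or precisely one less.

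The main obstacle, and the only place the minimality of $|A|$ is used, is in eliminating the ``one less'' possibility. Suppose for contradiction that $e_F(A\cup B)=\sum_{v\in A\cup B}\ell(v)-\ell(A\cup B)-1$. Then every intermediate inequality in the chain above must be tight; in particular the tightness of (iii) gives $e_F(A\cap B)=\sum_{v\in A\cap B}\ell(v)-\ell(A\cap B)$, so $F[A\cap B]$ is itself $\ell$-rigid, while the tightness of (ii) gives $e_F(B)=\sum_{v\in B}\ell(v)-\ell(B)-1$, so $F[B]+xy$ has exactly $\sum_{v\in B}\ell(v)-\ell(B)$ edges and hence coincides with the spanning $\ell$-sparse subgraph witnessing its rigidity; in particular $F[B]+xy$ is $\ell$-sparse. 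Now $F[A\cap B]$ is $\ell$-rigid and contains $\{x,y\}$, so the minimality of $|A|$ forces $A\cap B=A$, i.e., $A\subseteq B$. Finally, applying the $\ell$-sparseness of $F[B]+xy$ to the subset $A\subseteq B$ (which contains both endpoints of $xy$) gives $e_F(A)+1\le \sum_{v\in A}\ell(v)-\ell(A)$, which directly contradicts the rigidity $e_F(A)=\sum_{v\in A}\ell(v)-\ell(A)$ of $F[A]$. Therefore the lower alternative is impossible, $e_F(A\cup B)$ attains the sparseness bound, and $F[A\cup B]$ is $\ell$-rigid.
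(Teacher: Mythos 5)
Your proof is correct. The counting backbone is the same as the paper's: the inclusion--exclusion bound $e_F(A\cup B)\ge e_F(A)+e_F(B)-e_F(A\cap B)$ combined with $2$-intersecting supermodularity on the pair $A,B$ (legitimate since $x,y\in A\cap B$). Where you diverge is in how the two ``boundary'' situations are dispatched. The paper splits into cases up front: when $A\subseteq B$ it argues via partition-connectivity of the rigid graph $F[B]+xy$ with respect to the partition $\{A\}\cup\{\{v\}:v\in B\setminus A\}$, and when $A\not\subseteq B$ it feeds the strict inequality $e_F(A\cap B)<\sum_{v\in A\cap B}\ell(v)-\ell(A\cap B)$ (from minimality of $|A|$) into the count to cancel the $-1$ coming from $e_F(B)$. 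You instead run the count once, accept a possible deficiency of one, and then do a tightness analysis: tightness of the $e_F(A\cap B)$ bound makes $F[A\cap B]$ rigid, minimality then forces $A\subseteq B$, and tightness of the $e_F(B)$ bound makes $F[B]+xy$ itself $\ell$-sparse, which contradicts the rigidity of $F[A]$ when sparseness is evaluated on $A\ni x,y$. This last step is a genuinely different, and arguably cleaner, replacement for the paper's partition-connectivity argument; it also sidesteps a point the paper leaves implicit, namely why $e_F(B)$ cannot equal $\sum_{v\in B}\ell(v)-\ell(B)$ in the second case. Both routes use the same two hypotheses (minimality of $|A|$ and rigidity of $F[B]+xy$) in essentially the same roles.
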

\begin{proof}
{If $A$ is a subset of $B$, then   since $F[B]/A$ is $\ell$-partition-connected, we have 
$$e_{F}( B) \ge e_F(A)+e_{F[B]}(P)\ge \sum_{v\in A}\ell(v)-\ell(A)+\sum_{X\in P}\ell(X)-\ell( B)=
\sum_{v\in  B}\ell(v)-\ell( B), $$
where $P$ is the partition of $ B$ with $P=\{A\} \cup \{\{v\}:v\in B\setminus A\}$.
Now,  assume that $|A\cap B|<|A|$.
Since $|A|$ is minimal, $e_F(A\cap B)< \sum_{v\in A\cap B}\ell(v)-\ell(A\cap B)$.
Since $F[B]+xy$ is $\ell$-rigid, we must have $e_F(B)= \sum_{v\in B}\ell(v)-\ell( B)-1$,
which can conclude   that
$$e_{F}(A\cup B) \ge e_F(A)+e_F(B)-e_F(A\cap B) \ge 
 \sum_{v\in A}\ell(v)-\ell(A)+\sum_{v\in B}\ell(v)-\ell(B)-\sum_{v\in A\cap B}\ell(v)+\ell(A\cap B).$$
According to the assumption,  $\ell$ is supermodular on $A$ and $B$, and so
$$e_{F}(A\cup B)\ge \sum_{v\in A\cup B}\ell(v)+\ell(A\cap B)-\ell(A)-\ell(B)
 \ge
 \sum_{v\in A\cup B}\ell(v)-\ell(A\cup B).$$
Therefore, in both cases  $F[A\cup B]$ must be   $\ell$-rigid. 
Hence  the proposition holds.
}\end{proof}

\begin{proposition}{\rm (\cite{P})}\label{prop:Q:subadditive}
{Let $F$ be a graph with $x,y\in V(F)$ and let   $\ell$ be  a  subadditive integer-valued function on 
subsets of $V(F)$.
If    $F$ is $\ell$-sparse and $Q$ is  an  $\ell$-rigid subgraph of $F$ with the minimum number of vertices including $x$ and $y$,
 then for every vertex set $A$ with $\{x,y\}\subseteq A\subsetneq V(Q)$, $d_Q(A)\ge 1$.
}\end{proposition}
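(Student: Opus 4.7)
The plan is to argue by contradiction, mimicking the general pattern used in Propositions 2.1--2.3: combine the $\ell$-sparsity of $F$, the $\ell$-rigidity of $Q$, and the subadditivity of $\ell$ to manufacture a strictly smaller $\ell$-rigid subgraph containing both $x$ and $y$, contradicting the minimality of $|V(Q)|$.

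Concretely, suppose there exists $A$ with $\{x,y\}\subseteq A\subsetneq V(Q)$ and $d_Q(A)=0$. Let $B=V(Q)\setminus A$, which is nonempty. Because no edge of $Q$ crosses between $A$ and $B$, the edge set of $Q$ splits cleanly:
$$|E(Q)|=e_Q(A)+e_Q(B).$$
Since $Q$ is $\ell$-rigid, the left-hand side equals $\sum_{v\in V(Q)}\ell(v)-\ell(V(Q))$. Since $Q\subseteq F$ and $F$ is $\ell$-sparse, the two terms on the right satisfy $e_Q(A)\le \sum_{v\in A}\ell(v)-\ell(A)$ and $e_Q(B)\le \sum_{v\in B}\ell(v)-\ell(B)$. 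Adding these and substituting, I obtain
$$\ell(A)+\ell(B)\le \ell(V(Q)).$$
Subadditivity of $\ell$ provides the reverse inequality (this is exactly where the hypothesis enters), so equality must hold throughout.

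The key consequence is that equality in the two sparsity bounds also forces $e_Q(A)=\sum_{v\in A}\ell(v)-\ell(A)$. Since $E(Q[A])\subseteq E(F[A])$ and $F[A]$ is itself $\ell$-sparse, sandwiching gives $E(F[A])=E(Q[A])$ with $|E(F[A])|=\sum_{v\in A}\ell(v)-\ell(A)$. Hence $F[A]$ is an $\ell$-rigid subgraph of $F$ containing both $x$ and $y$, and $|A|<|V(Q)|$ contradicts the minimality of $Q$. This completes the argument.

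The essentially only obstacle is verifying that subadditivity, rather than the stronger supermodularity used elsewhere in the section, is indeed sufficient; but since $A$ and $B$ are disjoint this follows immediately from the definition, and no intersection hypothesis on $\ell$ is required. Everything else is routine bookkeeping with the sparsity and rigidity identities.
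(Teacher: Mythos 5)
Your proof is correct. Note that the paper itself gives no proof of this proposition (it is quoted from reference \cite{P}), so there is nothing internal to compare against; but your argument --- splitting $E(Q)$ across the empty cut, summing the two sparsity bounds, invoking subadditivity on the disjoint pair $A$ and $V(Q)\setminus A$ to force equality, and concluding that $F[A]$ is a smaller $\ell$-rigid subgraph containing $x$ and $y$ --- is exactly the natural argument in the style of Propositions 2.1--2.3 and is complete as written.
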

\section{A sufficient connectedness condition for a graph to be $\ell$-rigid}
The following proposition establishes a necessary connectedness condition for a graph to be $\ell$-rigid.
\begin{proposition}
{Let $G$ be a graph and let $\ell$ be a weakly subadditive real function on subsets of $V(G)$.
If $G$ is $\ell$-rigid, then for any two disjoint vertex  sets $A$ and $B$,
$$d_{G-B}(A) \ge \ell(A\cup B)-\sum_{v\in B} \ell(v)\, +(\ell(G\setminus A)-\ell(G)).$$
}\end{proposition}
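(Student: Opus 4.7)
The plan is to extract a witness $F$ for $\ell$-rigidity and then derive the claimed cut bound by a careful edge-counting argument in $F$. Since $G$ is $\ell$-rigid, there exists a spanning $\ell$-sparse subgraph $F\subseteq G$ with
$$|E(F)| = \sum_{v\in V(G)}\ell(v) - \ell(G).$$
Because $F\subseteq G-B$ restricted to edges between $A$ and $V(G)\setminus(A\cup B)$ is a subset of the corresponding edges in $G-B$, we have $d_{G-B}(A)\ge d_{F-B}(A)$, and it suffices to prove the inequality for $F$.

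Next I would set $C = V(G)\setminus(A\cup B)$ and observe that $d_{F-B}(A)$ is exactly the number of $F$-edges between $A$ and $C$. A direct decomposition of $E(F)$ into the six classes determined by the three-part partition $\{A,B,C\}$ yields the identity
$$d_{F-B}(A) \;=\; |E(F)| \;-\; e_F(A\cup B) \;-\; e_F(V(G)\setminus A) \;+\; e_F(B),$$
which is the crucial bookkeeping step. The reason is that $A\cup B$ covers the $A$-$B$ edges and the internal edges of $A$ and $B$, while $V(G)\setminus A$ covers the internal edges of $B$ and $C$ together with $B$-$C$ edges; subtracting both double-counts $e_F(B)$, which is added back.

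Now I would apply the $\ell$-sparseness of $F$ to the two sets $A\cup B$ and $V(G)\setminus A$, namely
$$e_F(A\cup B)\le \sum_{v\in A\cup B}\ell(v)-\ell(A\cup B),\qquad e_F(V(G)\setminus A)\le \sum_{v\in V(G)\setminus A}\ell(v)-\ell(V(G)\setminus A),$$
combine them with the rigidity equality $|E(F)|=\sum_{v\in V(G)}\ell(v)-\ell(G)$, and drop the nonnegative term $e_F(B)\ge 0$. Using that $B\subseteq V(G)\setminus A$, the three $\sum\ell(v)$-terms telescope to $-\sum_{v\in B}\ell(v)$, leaving exactly
$$d_{F-B}(A)\ge \ell(A\cup B)-\sum_{v\in B}\ell(v)+\bigl(\ell(V(G)\setminus A)-\ell(G)\bigr),$$
as desired.

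The calculation is essentially arithmetic once one has the right identity, so the only real obstacle is finding the two sparse sets whose contributions cancel correctly; the pair $(A\cup B,\, V(G)\setminus A)$ is exactly the one that produces the $-\sum_{v\in B}\ell(v)$ and $\ell(G\setminus A)-\ell(G)$ terms in the stated bound. Weak subadditivity of $\ell$ is not needed for the inequality itself; it only ensures that the sparse subgraph $F$ with the prescribed edge count makes sense and exists.
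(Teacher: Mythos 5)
Your proof is correct and follows essentially the same route as the paper: pass to a minimal rigid witness $F$, use the identity $d_{F-B}(A)=|E(F)|-e_F(A\cup B)-e_F(V(G)\setminus A)+e_F(B)$, and apply sparseness to the two sets $A\cup B$ and $V(G)\setminus A$ together with the rigidity equality. The only cosmetic difference is that the paper retains the nonnegative term $e_G(B)$ in its final bound, whereas you drop it explicitly.
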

\begin{proof}
{We may assume that $G$ is minimally $\ell$-rigid.
Since $G$ is $\ell$-sparse, 
$e_G(A\cup B)\le \sum_{v\in A\cup B}\ell(v)-\ell(A\cup B)$ and 
$e_G(A^c)\le \sum_{v\in A^c}\ell(v)-\ell(A^c)$, where $A^c=V(G)\setminus A$.
It is not hard to verify that
$d_{G-B}(A)= |E(G)|-e_G(A\cup B)-e_G(A^c)+e_G(B).$
Therefore,
$$d_{G-B}(A)\ge 
\sum_{v\in V(G)}\ell(v)-\ell(G)-\sum_{v\in A\cup B}\ell(v)+\ell(A\cup B)-\sum_{v\in A^c}\ell(v)+\ell(A^c)+e_G(B),$$
which implies that
$$d_{G-B}(A)\ge \ell(A\cup B)-\sum_{v\in B} \ell(v) +\ell(A^c)-\ell(G)+e_G(B).$$
Hence the proposition is proved.
}\end{proof}
\begin{cor}{\rm(\cite{MR3619513})}\label{cor:essentially}
{Let $k$ be an integer with $k\ge 2$. If  $G$ is a $k$-rigid  graph of order at least three, then  it must be  $k$-edge-connected and essentially $(2k-1)$-edge-connected, and also  for each vertex $v$, the graph $G-v$ remains $(k-1)$-edge-connected.
}\end{cor}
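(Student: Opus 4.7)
The plan is to derive this corollary as a direct specialization of the preceding Proposition to the set function $\ell = l_{k,2k-1}$, by making three targeted choices of the pair $(A,B)$ and carefully tracking the values of $\ell$ on small versus large sets. Recall that under this choice $\ell(v)=k$ for every singleton, $\ell(S)=2k-1$ whenever $|S|\ge 2$, and in particular $\ell(G)=2k-1$ since $|V(G)|\ge 3$.

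First I would establish $k$-edge-connectivity by setting $B=\emptyset$ and letting $A$ range over all proper nonempty subsets of $V(G)$. The proposition then gives $d_G(A)\ge \ell(A)+\ell(A^c)-\ell(G)$, and the right-hand side is computed by cases on whether $A$ and $A^c$ are singletons or not. The four combinations collapse to three since $|V(G)|\ge 3$ rules out $|A|=|A^c|=1$; in each surviving case the bound evaluates to at least $k$ (either $k+(2k-1)-(2k-1)$, $(2k-1)+k-(2k-1)$, or $(2k-1)+(2k-1)-(2k-1)=2k-1$). Essential $(2k-1)$-edge-connectivity falls out of the same computation: restricting to essential cuts means $|A|,|A^c|\ge 2$, which is precisely the fourth case, yielding $d_G(A)\ge 2k-1$.

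Next I would handle the vertex-removal statement by applying the proposition with $B=\{v\}$ and letting $A$ be any proper nonempty subset of $V(G)\setminus\{v\}$. Then $|A\cup B|\ge 2$, so $\ell(A\cup B)=2k-1$; likewise $|V(G)\setminus A|\ge 2$ because $V(G)\setminus A$ contains $v$ and at least one other vertex, so $\ell(V(G)\setminus A)=2k-1$; and $\sum_{u\in B}\ell(u)=k$. The proposition then delivers
\[
d_{G-v}(A)\ \ge\ (2k-1)-k+(2k-1)-(2k-1)\ =\ k-1,
\]
which is exactly the $(k-1)$-edge-connectivity of $G-v$.

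There is no real obstacle here; the argument is a careful bookkeeping exercise on which small sets have $\ell$-value $k$ versus $2k-1$. The only point demanding care is the hypothesis $|V(G)|\ge 3$, which is used precisely to exclude the degenerate case $|A|=|A^c|=1$ when proving $k$-edge-connectivity, and to guarantee $|V(G)\setminus A|\ge 2$ when proving $(k-1)$-edge-connectivity of $G-v$.
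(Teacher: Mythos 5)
Your derivation is correct and is exactly the route the paper intends: the corollary is stated immediately after the necessary-condition proposition and follows by specializing it to $\ell=l_{k,2k-1}$, taking $B=\emptyset$ for the edge-connectivity and essential edge-connectivity claims and $B=\{v\}$ for the vertex-deletion claim, with the same case analysis on which sets have $\ell$-value $k$ versus $2k-1$. Your bookkeeping (including the role of $|V(G)|\ge 3$ and the check that $l_{k,2k-1}$ is weakly subadditive) matches what the paper leaves implicit, so there is nothing to add.
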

The following theorem gives a sufficient connectedness condition for a graph to be $\ell$-rigid.
\begin{thm}\label{thm:sufficient:rigid}
{Let $G$ be a graph and let $\ell$ be a $2$-intersecting supermodular weakly subadditive nonnegative integer-valued function on subsets of $V(G)$.
If for each vertex $v$, $d_G(v)\ge 2\ell(v)$ and  for any two disjoint vertex  sets $A$ and $B$ with $A\cup B\subsetneq V(G)$
 and  $e_G(A\cup B)>\sum_{v\in A\cup B} \ell(v)-\ell(A\cup B)$,
$$d_{G-B}(A) \ge
2\ell(A\cup B)-\sum_{v\in B} \ell(v),$$
then $G$ has a  spanning $\ell$-rigid subgraph $H$  
excluding a  given arbitrary edge set of size at most $\ell(G)$.
}\end{thm}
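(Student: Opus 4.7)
The plan is to take $H$ to be a spanning $\ell$-sparse subgraph of $G' := G - E_0$ with the maximum possible number of edges, and to show $|E(H)| = \sum_{v \in V(G)} \ell(v) - \ell(V(G))$, which means $H$ is $\ell$-rigid. Suppose for contradiction that $|E(H)| < \sum_{v \in V(G)} \ell(v) - \ell(V(G))$. Because the $\ell$-sparse subgraphs form a matroid on the edges of the complete graph $K_{V(G)}$ with full rank $\sum_{v \in V(G)} \ell(v) - \ell(V(G))$ (recall that $\ell$ is $2$-intersecting supermodular and weakly subadditive), $H$ is not a base of this matroid, and hence there is a virtual edge $xy \in E(K_{V(G)}) \setminus E(H)$ such that $H + xy$ is still $\ell$-sparse; the maximality of $H$ inside $G'$ forces $xy \notin E(G')$.

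I would then study the family $\mathcal{R}$ of all maximal $\ell$-rigid subsets of $H$ of size at least two. By Proposition~2.1 (applied with $c = 2$), any two members of $\mathcal{R}$ share at most one vertex, and by the maximality of $|E(H)|$, every edge $uv \in E(G') \setminus E(H)$ lies inside some $R \in \mathcal{R}$; moreover, no member of $\mathcal{R}$ contains both $x$ and $y$. The heart of the argument is to extract from this rigid-subset structure two disjoint vertex sets $A, B$ with $A \cup B \subsetneq V(G)$ that violate the connectivity hypothesis. The plan is to take $A$ to be an appropriate union of members of $\mathcal{R}$ concentrated around $x$ (or the singleton $\{x\}$ when $x$ belongs to no member of $\mathcal{R}$) and $B$ to be the small set of ``separating'' vertices at which rigid subsets of $\mathcal{R}$ touching $A$ meet rigid subsets intersecting $V(G) \setminus (A \cup B)$. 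On one side, aggregating the $H$-edges inside $A \cup B$ and using supermodularity of $\ell$ together with weak subadditivity yields $e_G(A \cup B) > \sum_{v \in A \cup B} \ell(v) - \ell(A \cup B)$, triggering the conditional premise of the hypothesis. On the other side, any edge of $G$ leaving $A$ and missing $B$ must be absent from $H$ (by the rigid-closure structure) and from $E_0$; the rank deficit of $H$ together with the bound $|E_0| \leq \ell(V(G))$ keep this count strictly below $2\ell(A \cup B) - \sum_{v \in B} \ell(v)$, contradicting the hypothesis.

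The main obstacle is the careful construction of the pair $(A, B)$ so that the supermodularity and weak-subadditivity inequalities accumulate in the desired direction and the leakage into $V(G) \setminus (A \cup B)$ is tightly controlled. The exchange Propositions~2.2 and~2.3 are essential to re-route edges within $H$ until the structure of $\mathcal{R}$ exhibits $A$ and $B$ cleanly, and Proposition~2.4 controls the boundary of a minimum rigid subgraph of $H + xy$ containing $x$ and $y$, which is where the obstruction concentrates. The factor $2$ in $2\ell(A \cup B)$ corresponds to the ``double-deficit'' feature of rigidity compared with the single-deficit bound of partition-connectivity, and the vertex-degree condition $d_G(v) \geq 2\ell(v)$ is what handles the degenerate singleton case $A = \{v\}$ when $v$ belongs to no rigid subset of size at least two.
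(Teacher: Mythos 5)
Your setup (a maximum spanning $\ell$-sparse subgraph $H$ of $G-E_0$, the family of maximal $\ell$-rigid pieces, the matroid structure) matches the paper's, but the way you plan to use the connectivity hypothesis is where the proof breaks. You intend to locate a \emph{single} pair $(A,B)$ that violates the bound $d_{G-B}(A)\ge 2\ell(A\cup B)-\sum_{v\in B}\ell(v)$. The paper never does this, and for good reason: the hypothesis enters as an \emph{averaging} ingredient. One forms the collection $\mathcal{P}$ of maximal rigid pieces (augmented by singletons and pruned of the pieces $X$ with $e_G(X)=\sum_{v\in X}\ell(v)-\ell(X)$), applies the degree hypothesis to \emph{every} $X\in\mathcal{P}$ with $A=X_A$ (the vertices private to $X$) and $B=X_B$ (the shared vertices), sums over all $X$, and divides by $2$ because each crossing edge is counted at most twice. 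This yields $e_{G\setminus E_0}(\mathcal{P})\ge\sum_{X\in\mathcal{P}}\bigl(\ell(X)-\tfrac12\sum_{v\in X_B}\ell(v)\bigr)-\ell(G)$, which, added to the edges inside the pieces, forces $|E(H)|\ge\sum_{v}\ell(v)-\ell(G)$. No individual piece need violate anything, so a single-pair contradiction is not available in general; this is exactly the Lov\'asz--Yemini phenomenon, where each local piece is well connected but rigidity fails only in aggregate.

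Concretely, two steps of your sketch fail. First, the construction of $(A,B)$ is never actually given: ``an appropriate union of members of $\mathcal{R}$ concentrated around $x$'' with $B$ ``the separating vertices'' is the entire difficulty, and there is no reason such a choice satisfies $e_G(A\cup B)>\sum_{v\in A\cup B}\ell(v)-\ell(A\cup B)$ while keeping $V(G)\setminus(A\cup B)$ nonempty. Second, and more fatally, your upper bound on $d_{G-B}(A)$ is unfounded: edges of $H$ itself may leave $A$ and miss $B$ in large numbers (a maximum sparse subgraph typically contains many edges joining distinct maximal rigid pieces, and the rank deficit of $H$ can be as small as $1$ while such crossing edges are numerous), so the claim that ``any edge of $G$ leaving $A$ and missing $B$ must be absent from $H$'' is false. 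Without that, the count $d_{G-B}(A)$ cannot be pushed below the threshold. To repair the proof you should abandon the single-pair strategy, define $\mathcal{P}$ and the sets $X_A,X_B$ as above, verify $\sum_{X\in\mathcal{P}}\sum_{v\in X}\ell(v)-\tfrac12\sum_{X\in\mathcal{P}}\sum_{v\in X_B}\ell(v)\ge\sum_{v\in V(G)}\ell(v)$, and run the global count; the excluded set $E_0$ is then absorbed by subtracting $|E_0|\le\ell(G)$ once from the crossing-edge total, which is precisely the slack in the final inequality.
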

\begin{proof}
{Let $E$ be an edge set of size at most $\ell(G)$.
Let $\mathcal{F}$ be a spanning $\ell$-sparse subgraph of $G\setminus E$ with the maximum size.
Define  $\mathcal{A}$ to be the collection  of all vertex sets of the  maximal $\ell$-rigid subgraphs of $\mathcal{F}$.
Suppose, by way of contradiction, that $V(G)\notin \mathcal{A}$.
Let  $\mathcal{A}_0$  be the collection  of all vertex sets  $X$ in $\mathcal{A}$
with $e_G(X)=\sum_{v\in X}\ell(v)-\ell(X)$.
Define $\mathcal{P}$ be the collection of all vertex sets  in $\mathcal{A}\setminus \mathcal{A}_0$
along with the vertex sets $\{v\}$ with  $v\in V(G)\setminus \cup_{X\in \mathcal{A}\setminus \mathcal{A}_0}X$.
For any $X\in \mathcal{P}$, 
define $X_B$ to be the set of all vertices $v$ which appears in at least two vertex sets of $\mathcal{P}$,
 and set $X_A=X\setminus X_B$.
It is easy to see that
$$
\sum_{X\in \mathcal{P}}\sum_{v\in X}\ell(v)
-\frac{1}{2}\sum_{X\in \mathcal{P}} \sum_{v\in X_B}\ell(v) \ge
 \sum_{v\in V(G)}\ell(v).
$$
Since $|E(\mathcal{F})|$ is maximal, 
for  $xy\in E(G)\setminus (E\cup E(\mathcal{F}))$ there must be   an $\ell$-rigid subgraph of $\mathcal{F}$  including $x$ and $y$ 
with vertex set  in $\mathcal{A}$.
For every $X\in \mathcal{A}_0$, we have   $e_G(X)=e_\mathcal{F}(X)$, and so
 for every  $xy \in E(G)\setminus (E\cup E(\mathcal{F}))$ there must be an 
$\ell$-rigid subgraph of $\mathcal{F}$  including $x$ and $y$  with vertex set  in $\mathcal{P}$.
Hence 
$$e_\mathcal{F}(\mathcal{P})=e_{G}(\mathcal{P}).$$
By the assumption,
$$
e_{G\setminus E}(\mathcal{P})\ge
\frac{1}{2}\sum_{X\in \mathcal{P}}d_{G- X_B}(X_A)-|E|
\ge   \sum_{X\in \mathcal{P}}\big(\ell(X)-\frac{1}{2}\sum_{v\in X_B}\ell(v)\big)-\ell(G).
$$
Therefore,
$$|E(\mathcal{F})|=
e_\mathcal{F}(\mathcal{P})
+\sum_{X\in \mathcal{P}}e_\mathcal{F}(X)= 
e_{G\setminus E}(\mathcal{P})
+\sum_{X\in \mathcal{P}}(\sum_{v\in X}\ell(v)-\ell(X))\ge
   \sum_{v\in V(G)} \ell(v)-\ell(G).$$
Hence $\mathcal{F}$ must be  $\ell$-rigid, a contradiction.
}\end{proof}
\begin{remark}
{In the above-mentioned theorem we could reduce  the needed lower bound by  $2\ell(G)-2|E|$ 
for any two disjoint  vertex sets $A$ and $B$ with $|A\cup B|=|V(G)|-1$, 
where $E$ is the give edge set of size at most $\ell(G)$.
In fact, there are at most one vertex set $X$ in $\mathcal{A}$ with $|X|=|V(G)|-1$, when $|V(G)|\ge 4$.
This refined version can imply Corollary 1.9 in~\cite{Gu(2018)}.
}\end{remark}
%
%
%
%
%
%
%
%
%
%
%
%
%
%
%
\section{A necessary and sufficient decomposition condition}

\label{sec:Decomposition}
By the result of Nash-Williams~\cite{MR0133253} and Tutte~\cite{MR0140438}, a graph is $m$-partition-connected
 if and only if it can be decomposed  into $m$ edge-disjoint spanning trees.
Recently, the present author generalized this result to the following supermodular version.
\begin{thm}{\rm (\cite{P})}\label{thm:main:partition-connected}
{Let $G$ be a graph and let $l_1, l_2,\ldots, l_m$ be $m$  intersecting supermodular subadditive integer-valued functions on subsets of $V(G)$.  
Then $G$ is $(l_1+\cdots +l_m)$-partition-connected if and only if it can be decomposed into $m$ edge-disjoint spanning subgraphs $H_1,\ldots, H_m$ such that every graph $H_i$ is $l_i$-partition-connected.
}\end{thm}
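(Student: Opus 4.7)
The ``only if'' direction is immediate: given a decomposition $G = H_1 \cup \cdots \cup H_m$ with each $H_i$ spanning and $l_i$-partition-connected, any partition $P$ of $V(G)$ satisfies
\[
e_G(P) \;=\; \sum_{i=1}^{m} e_{H_i}(P) \;\ge\; \sum_{i=1}^{m}\Bigl(\sum_{A\in P} l_i(A) - l_i(G)\Bigr),
\]
which is exactly the $(l_1+\cdots+l_m)$-partition-connectedness inequality, so only the ``if'' direction requires work.

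For the converse, I would argue by induction on $m$. The case $m=1$ is vacuous. For $m \ge 3$, the set function $l' := l_2+\cdots+l_m$ is itself intersecting supermodular and subadditive, since both classes are closed under nonnegative integer sums; so once the $m=2$ case is proved, it yields $G = H_1 \cup H'$ with $H'$ spanning and $l'$-partition-connected, and the inductive hypothesis applied to $H'$ with $(l_2,\ldots,l_m)$ completes the decomposition.

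The heart of the matter is therefore the case $m=2$. Because $l_2$ is subadditive, $\sum_{A\in P} l_2(A) \ge l_2(G)$ for every partition $P$, so any $(l_1+l_2)$-partition-connected graph is already $l_1$-partition-connected. My plan is to pick a spanning $l_1$-partition-connected subgraph $H_1 \subseteq G$ minimizing $|E(H_1)|$, set $H_2 := G - E(H_1)$, and verify $l_2$-partition-connectedness of $H_2$ by contradiction. If some partition $P$ witnesses the failure with $e_{H_2}(P) \le \sum_{A\in P}l_2(A) - l_2(G) - 1$, then combining with the hypothesis on $G$ gives $e_{H_1}(P) \ge \sum_{A\in P} l_1(A) - l_1(G) + 1$, so at least one edge $e$ of $H_1$ crossing $P$ is a ``surplus'' edge. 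Minimality of $|E(H_1)|$ then forces, for each such $e$, the existence of a partition $P_e$ of $V(G)$ that is tight for $H_1$ with respect to $l_1$ and across which $e$ passes.

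The key step, and the main obstacle, is an uncrossing argument that combines $P$ with the family $\{P_e\}$ into a single partition $P^{\ast}$ that violates $(l_1+l_2)$-partition-connectedness of $G$. Intersecting supermodularity of $l_1$ (respectively $l_2$) lets one uncross a part of some $P_e$ (respectively $P$) with a part of $P$ (respectively $P_e$) whenever the intersection is nonempty without decreasing $\sum l_i(\cdot)$, while subadditivity of both functions guarantees that passing to the common refinement does not hurt either side of the partition inequality. Because $l_1$ is only intersecting supermodular and not $2$-intersecting supermodular, the $l_1$-sparse subgraphs need not form a matroid, so the uncrossing must be executed at the partition level rather than by invoking a matroid-union theorem. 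Iterating the uncrossing across the surplus edges $e$ should eventually produce $P^{\ast}$ with $e_G(P^{\ast}) < \sum_{A\in P^{\ast}}(l_1+l_2)(A) - (l_1+l_2)(G)$, contradicting the hypothesis on $G$ and completing the proof.
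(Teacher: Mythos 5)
First, note that the paper does not actually prove this theorem: it is quoted from reference \cite{P}, so there is no in-paper argument to compare against line by line. Your easy direction (summing the partition inequalities over the $H_i$ --- though you have swapped the labels ``if'' and ``only if''), your reduction to $m=2$ via closure of intersecting supermodularity and subadditivity under sums, and your observation that subadditivity of $l_2$ makes $G$ itself $l_1$-partition-connected are all fine.

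The genuine gap is in the heart of the $m=2$ case. You fix one spanning $l_1$-partition-connected subgraph $H_1$ of minimum size, set $H_2=G-E(H_1)$, and propose to derive, from a violating partition $P$ for $H_2$ together with tight partitions $P_e$ for the surplus edges, a partition $P^{\ast}$ contradicting the $(l_1+l_2)$-partition-connectedness of $G$. No uncrossing scheme can succeed here, because the configuration you are trying to contradict actually occurs in graphs that \emph{do} satisfy the hypothesis. Take $G=K_4$ and $l_1=l_2\equiv 1$ (constant $1$ on nonempty sets, which is intersecting supermodular and subadditive); then $1$-partition-connected means connected and $G$ is $2$-partition-connected. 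The star at a vertex $v_1$ is a minimum-size spanning connected subgraph $H_1$, yet $H_2$ is the triangle on $\{v_2,v_3,v_4\}$, which is not spanning connected: $P=\{\{v_1\},\{v_2,v_3,v_4\}\}$ violates the inequality for $H_2$, every star edge is surplus for $P$, and each admits a tight partition $P_e$. Since $K_4$ \emph{is} $2$-partition-connected, the contradiction you are aiming for is unreachable; the correct conclusion in this situation is not that $G$ fails the hypothesis but that $H_1$ was badly chosen. A correct proof must therefore include an exchange step that modifies $H_1$ (equivalently, it must optimize over the pair, e.g.\ by maximizing $|E(F_1)|+|E(F_2)|$ over edge-disjoint sparse subgraphs and performing edge replacements, which is exactly the machinery this paper deploys in Theorem~\ref{thm:generalized:D} via Proposition~\ref{prop:xGy-exchange}), rather than arguing by contradiction from a single fixed minimal $H_1$.
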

For a special case, Theorem~\ref{thm:main:partition-connected}  
can  be developed to a  rigid version  as the following result which is a generalized version of Theorem 5.2 in~\cite{Gu(2018)}.
However, an arbitrary $(\ell_1+\ell_2)$-rigid graph  may have  no spanning minimally $\ell_1$-rigid subgraphs,  
 see Figure~\ref{Figure:example}.
\begin{thm}\label{thm:rigid:decomposition}
{Let $G$ be a graph and  $\ell$ be a $2$-intersecting supermodular weakly subadditive  integer-valued function on subsets of $V(G)$.
Assume that for any two adjacent vertices $u$ and $v$, $\ell(u)+\ell(v)=\ell(\{u,v\})+1$.
Then $G$ is $p\ell$-rigid  if and only if it can be decomposed into $p$ edge-disjoint  spanning $\ell$-rigid subgraphs.
}\end{thm}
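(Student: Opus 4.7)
The ``if'' direction is straightforward. Given an edge-disjoint decomposition $G=H_1\cup\cdots\cup H_p$ into spanning $\ell$-rigid subgraphs, select inside each $H_i$ a spanning $\ell$-sparse subgraph $F_i$ of extremal size $\sum_{v}\ell(v)-\ell(V(G))$. The disjoint union $F=F_1\cup\cdots\cup F_p$ is spanning with $|E(F)|=p\bigl(\sum_{v}\ell(v)-\ell(V(G))\bigr)$, and for every vertex set $A$ one has $e_F(A)=\sum_{i}e_{F_i}(A)\le p\bigl(\sum_{v\in A}\ell(v)-\ell(A)\bigr)$, so $F$ is $p\ell$-sparse and therefore $p\ell$-rigid.

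For the ``only if'' direction, assume $G$ is $p\ell$-rigid. By definition, $G$ contains a spanning $p\ell$-sparse subgraph $F_0$ with $|E(F_0)|=p\bigl(\sum_{v}\ell(v)-\ell(V(G))\bigr)$. My plan is to partition $E(F_0)$ into $p$ edge-disjoint spanning $\ell$-sparse subgraphs $F_1,\ldots,F_p$, each of extremal size $\sum_v\ell(v)-\ell(V(G))$; each $F_i$ will then be $\ell$-rigid, and the remaining edges of $E(G)\setminus E(F_0)$ can be distributed freely among the $F_i$ to produce the required $H_1,\ldots,H_p$.

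To build this partition, choose edge-disjoint spanning $\ell$-sparse subgraphs $F_1,\ldots,F_p$ of $F_0$ maximizing $\sum_i|E(F_i)|$. If this maximum meets the target $p\bigl(\sum_v\ell(v)-\ell(V(G))\bigr)$ we are done; otherwise some edge $e=uv\in E(F_0)\setminus\bigcup_i E(F_i)$ cannot be added to any $F_i$ while keeping it $\ell$-sparse. The adjacency hypothesis $\ell(u)+\ell(v)=\ell(\{u,v\})+1$ says precisely that $\{e\}$ itself is $\ell$-sparse, so for each $i$ the obstruction is a minimal vertex set $A_i\ni u,v$ such that $F_i[A_i]$ is already $\ell$-rigid. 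Proposition~\ref{prop:xGy-exchange} then lets us swap $e$ into $F_i$ against any edge of $F_i[A_i]$; the evicted edge is relocated to some $F_j$, possibly triggering further swaps, and one builds an Edmonds-type augmenting chain whose termination forces a strict increase of $\sum_i|E(F_i)|$, contradicting maximality. The main obstacle is precisely this termination argument: the adjacency hypothesis is what makes every edge a rank-one element of the $\ell$-matroid, so the exchange can operate one edge at a time; Proposition~\ref{prop:Q1Q2e} together with the closure lemma opening Section~2 then guarantees that the minimal rigid hull of $\{u,v\}$ in each $F_i$ is uniquely determined and varies coherently along the chain, ruling out a cycle without violating the $p\ell$-sparseness of $F_0$.
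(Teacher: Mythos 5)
Your ``if'' direction is fine and is the standard argument. The gap is in the ``only if'' direction, and it is a real one: an augmenting-chain argument of the kind you sketch, even if carried out in full, only proves Edmonds' min--max formula for the union of $p$ copies of the $\ell$-sparsity matroid, namely that the maximum of $\sum_i|E(F_i)|$ equals $\min_H\bigl(p\,\rank_{\ell}(H)+|E(F_0)\setminus E(H)|\bigr)$ over subgraphs $H$. When the augmenting search from an unplaceable edge $e=uv$ terminates without an augmentation, it does \emph{not} ``force a strict increase, contradicting maximality''; it terminates by exhibiting a tight set $H$ attaining that minimum. So your contradiction never materializes as stated. What still has to be proved --- and this is the substantive part of the paper's argument --- is that for every such $H$ one has $p\,\rank_{\ell}(H)+|E(G)\setminus E(H)|\ge |E(G)|$. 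The paper does this by taking a maximum spanning $\ell$-sparse subgraph $F$ of $H$, letting $\mathcal{P}$ be the vertex sets of the maximal $\ell$-rigid subgraphs of $F$, observing via the adjacency hypothesis $\ell(u)+\ell(v)=\ell(\{u,v\})+1$ that every edge of $F$ is itself $\ell$-rigid and hence lies in some member of $\mathcal{P}$ (so $\rank_\ell(H)=\sum_{X\in\mathcal{P}}(\sum_{v\in X}\ell(v)-\ell(X))$ and $e_{\mathcal{F}}(\mathcal{P})\le|E(G)\setminus E(H)|$), and then comparing against $e_G(X)\le \sum_{v\in X}p\ell(v)-p\ell(X)$, which uses the $p\ell$-sparseness of $G$. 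None of this counting appears in your proposal; you use the adjacency hypothesis only to note that a single edge is $\ell$-sparse, which is necessary but nowhere near sufficient.

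A secondary, smaller issue: your final sentences defer the entire burden to ``the termination argument'' and to Proposition~\ref{prop:Q1Q2e} plus ``the closure lemma,'' asserting that the minimal rigid hulls ``vary coherently along the chain, ruling out a cycle.'' This is precisely the content you would need to supply, and as written it is an appeal to hope rather than a proof. If you want to avoid the matroid-union black box, you must either reprove the rank formula for this specific matroid and then still do the counting above, or (as the paper does) cite Edmonds' theorem and concentrate your effort on the inequality $p\,\rank_\ell(H)+|E(G)\setminus E(H)|\ge \sum_{v}p\ell(v)-p\ell(G)$.
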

\begin{proof}
{We may assume that $G$ is minimally $p\ell$-rigid so that $G$ is $p\ell$-sparse and $|E(G)|=\sum_{v\in V(G)}p \ell(v)-p \ell(G)$.
The proof presented here is based on matroid theory. 
We use some properties of the matroid obtained from the union of $p$ copies of a matroid consists of  all  spanning $\ell$-sparse subgraphs of $G$. 
Let $\mathcal{F}_1,\ldots, \mathcal{F}_p$ be $p$ edge-disjoint spanning $\ell$-sparse  subgraphs of $G$ 
with the maximum  $|E(\mathcal{F})|$, where $\mathcal{F}=\mathcal{F}_1\cup \cdots \cup \mathcal{F}_p$.
By a theorem of Edmonds on the rank of matroids~\cite{MR0237366},  there is a spanning subgraph $H$ of $G$ with 
$$|E(\mathcal{F})| = p\, \rank_{\ell}(H)+|E(G)\setminus E(H)|,$$
where $\rank_{\ell}(H)$ denotes  the maximum of all $|E(F)|$ taken over all spanning $\ell$-sparse subgraphs $F$ of $H$. 
Take $F$ be a spanning $\ell$-sparse subgraph of $H$ with the maximum $|E(F)|$.
Let $\mathcal{P}$ be the collection of subsets of $V(F)$ obtained from the maximal $\ell$-rigid subgraphs of $F$.
By the property of $\ell$,  every edge $e\in E(F)$ itself  is an $\ell$-rigid subgraph of $F$ and so lies in a maximal $\ell$-rigid subgraph of $F$.
By the maximality of $F$,  both ends of every edge $e\in E(H) \setminus  E(F)$ must  lie in an $\ell$-rigid subgraph of $F$; 
otherwise we can add it to $F$ to obtain a new spanning sparse subgraphs with larger size.
Thus  $e_{\mathcal{F}}(\mathcal{P})\le |E(G)\setminus E(H)|$, 
and also
$$\rank_{\ell}(H)=|E(F)|=\sum_{X\in \mathcal{P}}e_{F}(X)=\sum_{X\in \mathcal{P}} (\sum_{v\in X}\ell(v)-\ell(X)),$$
which implies that
$$ \sum_{X\in \mathcal{P}} e_{\mathcal{F}}(X)+e_{\mathcal{F}}(\mathcal{P})
= |E(\mathcal{F})|=
p \sum_{X\in \mathcal{P}}(\sum_{v\in X}\ell(v)-\ell(X))+|E(G)\setminus E(H)|.$$
On the other hand,
$$|E(\mathcal{F})|=\sum_{X\in \mathcal{P}} e_{\mathcal{F}}(X)+e_{\mathcal{F}}(\mathcal{P})\le
\sum_{X\in \mathcal{P}}e_G(X)+|E(G)\setminus E(H)|\le 
 \sum_{X\in \mathcal{P}}(\sum_{v\in X}p\ell(v)-p\ell(X))+|E(G)\setminus E(H)|.$$
These inequalities can imply that for every $X\in \mathcal{P}$,  
 $e_\mathcal{F}(X)=e_{G}(X)$ and also $e_{\mathcal{F}}(X)=|E(G)\setminus E(H)|$.
Therefore, $|E(\mathcal{F})|=|E(G)|=\sum_{v\in V(G)}p \ell(v)-p \ell(G)$, 
and so for every $\mathcal{F}_i$, we must have 
$|E(\mathcal{F}_i)|=\sum_{v\in V(G) }\ell(v)-\ell(G)$.
Hence the theorem holds.
}\end{proof}
\begin{figure}[h]
{\centering
\includegraphics[scale=1]{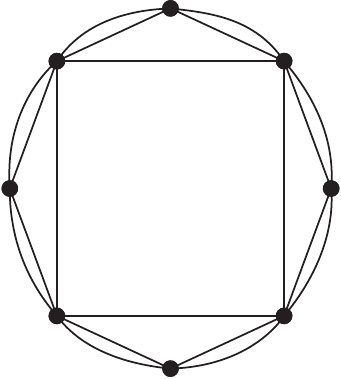}
\caption{An $\ell_{3,4}$-rigid graph with no spanning minimally  $\ell_{2,3}$-rigid subgraphs.}
\label{Figure:example}
}\end{figure}
%
%
%
%
%
%
%
%
%
%
%
%
\newpage
\section{Structures of  maximal  packing spanning sparse subgraphs}
Here, we state following fundamental theorem, 
which gives much information about maximal  packing spanning sparse subgraphs.
This result is a  supplement of a recent result in~\cite{P}  and provides another extension for Lemma~3.5.3 in~\cite{MR1743598}.
\begin{thm}\label{thm:generalized:D}
{Let $G$ be a graph, let $l$ be an  intersecting supermodular    subadditive  integer-valued function on subsets of $V(G)$, and let $\ell$ be a $2$-intersecting supermodular  subadditive   integer-valued function on subsets of $V(G)$.
 If $F$ and $\mathcal{F}$ are two edge-disjoint spanning subgraphs of $G$ 
with the maximum $|E(F \cup \mathcal{F})|$
such that $F$ is $l$-sparse and $\mathcal{F}$ is $\ell$-sparse,
 then there is a partition $P$ of $V(G)$  with the following properties:
\begin{enumerate}{
\item For every $A\in P$, the graph $F[A]$ is $l$-partition-connected.
\item There is no edges in $E(G)\setminus E(F\cup \mathcal{F})$ joining different parts of $P$.
\item   For every $xy\in E(G)\setminus E(\mathcal{F})$ with $x,y\in A\in P$,
 there is a vertex set $X$ such that  $\{x,y\} \subseteq X\subseteq A$ and $\mathcal{F}[X]$ is $\ell$-rigid.
}\end{enumerate}
}\end{thm}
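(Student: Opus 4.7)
The plan is to choose a maximum pair $(F,\mathcal{F})$ extremally, define $P$ as the partition into maximal $l$-partition-connected induced subgraphs of $F$, and verify properties (1)--(3) via exchange arguments drawing on Propositions~2.1 and~2.2. I would first pick $(F,\mathcal{F})$ that maximizes $|E(F\cup\mathcal{F})|$ and, subject to that, maximizes $|E(\mathcal{F})|$. The intersecting supermodularity and subadditivity of $l$ imply that whenever $F[X_1]$ and $F[X_2]$ are both $l$-partition-connected with $X_1\cap X_2\neq\emptyset$, the graph $F[X_1\cup X_2]$ is $l$-partition-connected as well (a routine uncrossing of partitions). Hence the maximal $l$-partition-connected vertex subsets of $F$ are pairwise disjoint and cover $V(G)$, yielding the partition $P$ and property~(1).

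Property~(2) follows from primary maximality: if $xy\in E(G)\setminus E(F\cup\mathcal{F})$, then $F+xy$ cannot be $l$-sparse, so there exists an $l$-rigid $F[C]$ with $x,y\in C$; since $F[C]$ is $l$-partition-connected, $C$ lies in a single part of $P$, placing $x,y$ together. For property~(3), take $xy\in E(G)\setminus E(\mathcal{F})$ with $x,y\in A\in P$. Regardless of whether $xy\in E(F)$ or $xy\notin E(F\cup\mathcal{F})$, the appropriate maximality (secondary maximality in the first case, primary maximality in the second) shows that $\mathcal{F}+xy$ is not $\ell$-sparse; Proposition~2.1 then yields a unique minimum-order $\ell$-rigid $\mathcal{F}[X']$ with $\{x,y\}\subseteq X'$. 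The central step is to prove that every edge $e'=u'v'\in E(\mathcal{F}[X'])$ has both endpoints in a common part of $P$. Proposition~2.2 gives that $\mathcal{F}-e'+xy$ is $\ell$-sparse; when $xy\notin E(F)$, the swap $(F,\mathcal{F}-e'+xy)$ combined with primary maximality forces $F+e'$ to be non-$l$-sparse, producing an $l$-rigid $F[D]$ with $u',v'\in D$, so $u',v'$ share a part. Once every edge of $\mathcal{F}[X']$ lies inside a part, let $Q=\{X'\cap A':A'\in P,\,X'\cap A'\neq\emptyset\}$: from $e_{\mathcal{F}[X']}(Q)=0$, the $\ell$-partition-connectedness of $\mathcal{F}[X']$ gives $\sum_{Y\in Q}\ell(Y)\le\ell(X')$, while subadditivity of $\ell$ gives the reverse inequality; the rigidity identity $|E(\mathcal{F}[X'])|=\sum_{v\in X'}\ell(v)-\ell(X')$ combined with $\ell$-sparseness of each $\mathcal{F}[Y]$ then forces each $\mathcal{F}[Y]$ to be $\ell$-rigid, so $X=X'\cap A$ furnishes the required $\ell$-rigid subgraph.

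The main obstacle is the case $xy\in E(F)$ of property~(3): there the analogous swap $(F-xy,\mathcal{F}-e'+xy)$ drops the primary count by one, and attempting to restore it by adding $e'$ back to $F-xy$ succeeds precisely when $F-xy+e'$ is $l$-sparse, producing an alternative maximum configuration rather than a contradiction. I would handle this via a tertiary optimality criterion---for instance, among all $(F,\mathcal{F})$ achieving the primary and secondary maxima, choosing one that minimises $\sum_{xy\in E(F)}|V(X'_{xy})|$, where $X'_{xy}$ denotes the unique minimum-order $\ell$-rigid subgraph of $\mathcal{F}$ containing $x$ and $y$---so that the bad-case swap strictly decreases this quantity and contradicts the tertiary minimality, forcing $F+e'$ to be non-$l$-sparse in this case too and completing the proof of (3).
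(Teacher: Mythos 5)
Your construction of $P$ as the maximal $l$-partition-connected induced subgraphs of $F$, your verification of properties (1) and (2), your handling of the case $xy\notin E(F\cup\mathcal{F})$ in property (3), and your concluding uncrossing/subadditivity argument forcing $\mathcal{F}[X'\cap A]$ to be $\ell$-rigid are all sound. But note that your route differs from the paper's: the paper never fixes a single extremal pair and a single partition. It takes the closure of $(F,\mathcal{F})$ under all finite sequences of edge exchanges, collects every edge freed by some reachable maximum configuration into an auxiliary spanning subgraph $G_0$, defines $P$ as the components of $G_0$, and proves a persistence claim (vertices lying in an $\ell_i$-rigid subgraph of $\mathcal{F}'_i\cap G_0$ after an exchange already lay in one of $\mathcal{F}_i\cap G_0$ before it). That machinery exists precisely to absorb the case your argument cannot close.

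The gap is the one you flag yourself, and your proposed repair does not work. When $xy\in E(F)$ and $F-xy+e'$ is $l$-sparse, the swap produces the alternative optimum $(F^\ast,\mathcal{F}^\ast)=(F-xy+e',\,\mathcal{F}-e'+xy)$, and you need the tertiary potential $\sum_{uv\in E(F)}|V(X'_{uv})|$ to strictly decrease. The two terms you control behave only weakly as hoped: the term for $xy$ disappears (worth $|X'|$), and the new term for $e'$ is at most $|X'|$ because $e_{\mathcal{F}^\ast}(X')=e_{\mathcal{F}}(X')$, so $\mathcal{F}^\ast[X']$ is still $\ell$-rigid and contains both ends of $e'$; this gives only a non-strict inequality. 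Worse, the remaining terms can increase: for another edge $uv\in E(F)$ whose minimal rigid set $X'_{uv}$ contains $e'$ but not both of $x$ and $y$, the graph $\mathcal{F}^\ast[X'_{uv}]$ loses an edge and ceases to be $\ell$-rigid, and the minimal $\ell$-rigid set of $\mathcal{F}^\ast$ containing $u,v$ can grow (to something like $X'_{uv}\cup X'$, which is rigid in $\mathcal{F}^\ast$ by Proposition 2.1 but may be strictly larger than $X'_{uv}$). So the potential is not monotone and the descent halts. To finish you would need either a genuinely monotone invariant for this exchange, or, as the paper does, to replace the single-extremal-pair framework by the exchange closure $G_0$, where the backward-persistence claim does the work that your potential function was meant to do.
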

\begin{proof}
{Define $\ell_1=l$, $\ell_2=\ell$, $F_1=F$, and $F_2=\mathcal{F}$. Put  $T=(F_1,F_2)$. 
 Let $\mathcal{A}$ be the set of all $2$-tuples $\mathcal{T}=(\mathcal{F}_1,\mathcal{F}_2)$ 
with the maximum  $|E(\mathcal{T})|$
 such that $\mathcal{F}_1$ and  $\mathcal{F}_2$ are edge-disjoint spanning subgraphs of $G$ 
and every $\mathcal{F}_i$ is $\ell_i$-sparse,
where $E(\mathcal{T}) =E(\mathcal{F}_1 \cup \mathcal{F}_2)$.
Note that if  $e\in E(G)\setminus E(\mathcal{T})$, then every graph $\mathcal{F}_i+e$ is not $\ell_i$-sparse; 
otherwise, we replace $\mathcal{F}_i$ by $\mathcal{F}_i+e$ in $\mathcal{T}$, 
which contradicts maximality of $|E(\mathcal{T})|$.
Thus both ends of $e$ lie in an $\ell_i$-rigid subgraph of $\mathcal{F}_i$.
Let $Q_i$ be the  $\ell_i$-rigid subgraph of $\mathcal{F}_i$ 
including both ends of $e$ 
with minimum number of vertices.
Let $e'\in Q_i$.
Define $\mathcal{F}'_i=\mathcal{F}_i-e'+e$, 
and $\mathcal{F}'_j=\mathcal{F}_j$ for other $j$ with $j\neq i$.
According to Proposition~\ref{prop:xGy-exchange}, 
the graph $\mathcal{F}'_i$ is again $\ell_i$-sparse and so 
$\mathcal{T}'=(\mathcal{F}'_1, \mathcal{F}'_2) \in \mathcal{A}$.
We say that $\mathcal{T}'$ is obtained from $\mathcal{T}$ by replacing a pair of edges.
Let $\mathcal{A}_0$ be the set of all $2$-tuples $\mathcal{T}$ in $\mathcal{A}$ 
which can be obtained from $T$ by a series of edge replacements.
Let $G_0$ be the spanning subgraph of $G$ with
$$E(G_0)=\bigcup_{\mathcal{T}\in  \mathcal{A}_0} (E(G)\setminus E(\mathcal{T})).$$
Now, we prove the following claim.
%
\vspace{2mm}\\
{\bf Claim.} 
Let  $\mathcal{T}=(\mathcal{F}_1, \mathcal{F}_2) \in \mathcal{A}_0$ and 
assume  that 
$\mathcal{T}'=(\mathcal{F}'_1, \mathcal{F}'_2)$  is obtained from $\mathcal{T}$ by replacing a pair of edges.
If $x$ and $y$ are two vertices in an  $\ell_i$-rigid subgraph  of $\mathcal{F}'_i \cap G_0$, 
then $x$ and $y$ are also  in an $\ell_i$-rigid subgraph of $\mathcal{F}_i \cap G_0$, where $i=1,2$.
\vspace{2mm}\\
{\bf Proof of Claim.} 
Let $e'$ be the new edge in $E(\mathcal{T}')\setminus E(\mathcal{T})$.
Define $Q'_i$ to be the minimal $\ell_i$-rigid subgraph  of $\mathcal{F}'_i \cap G_0$ including $x$ and $y$.
We may assume that $e'\in E(Q'_i)$; 
otherwise, $E(Q'_i) \subseteq E(\mathcal{F}_i) \cap E(G_0)$ and the proof can easily be completed.
Since $e' \in E(\mathcal{T}')\setminus E(\mathcal{T})$, both ends  of $e'$
  must lie in an $\ell_i$-rigid subgraph   of $\mathcal{F}_i $.
Define $Q_i$ to be the minimal $\ell_i$-rigid subgraph  of $\mathcal{F}_i$
including both ends of $e'$.
By Proposition~\ref{prop:xGy-exchange}, for every edge $e\in E(Q_i)$,
 the graph $\mathcal{F}_i-e+e'$ remains $\ell_i$-sparse, which can imply that $E(Q_i)\subseteq E(G_0)$.
Define $Q=(Q_i\cup Q'_i)-e'$.
Note that  $Q$ includes  $x$ and $y$, and also $E(Q)\subseteq E(G_0)\cap E(\mathcal{F}_i)$.
By Proposition~\ref{prop:Q1Q2e}, the graph $Q$ must be  $\ell_i$-rigid and so  the claim holds.

Define $P$ to be the partition of $V(G)$ obtained from the components of $G_0$.
Let $i \in \{1,2\}$,
let   $C_0$ be a  component of $G_0$, 
and let $xy\in E(C_0)$.  
By the definition of $G_0$,  
there is no edges in $E(G)\setminus E(F_1\cup F_2)$ joining different parts of $P$,
and also  there are some $2$-tuples $\mathcal{T}^1,\ldots, \mathcal{T}^n$  in
 $\mathcal{A}_0$ such that
 $xy \in E(G)\setminus E(\mathcal{T}^n)$, 
 $T= \mathcal{T}^1$,
and every $\mathcal{T}^k$
 can be obtained from $\mathcal{T}^{k-1}$ by replacing a pair of edges,  where $1 < k \le n$.
As we stated above,  $x$ and $y$ must lie   in an $\ell_i$-rigid  subgraph  of $\mathcal{F}^n_i$.
Let $Q'_i$ be the  minimal $\ell_i$-rigid subgraph of $\mathcal{F}^n_i$
 including $x$ and $y$.
By Proposition~\ref{prop:xGy-exchange}, 
for every edge $e\in E(Q'_i)$, the graph  $\mathcal{F}^n_i-e+xy$ remains $\ell_i$-sparse,  which can imply
$E(Q'_i)\subseteq E(G_0)$. 
Thus $x$ and $y$ must also lie  in an $\ell_i$-rigid subgraph  of $\mathcal{F}^n_i\cap G_0$.
By repeatedly applying the above-mentioned claim, 
 one can conclude  that 
$x$ and $y$ lie  in an  $\ell_i$-rigid subgraph of $F_i \cap G_0$.
Let $Q_i$ be the  minimal $\ell_i$-rigid subgraph of $F_i$  including $x$ and $y$
so that $E(Q_i)\subseteq E(G_0)$. 
Since $\ell_i$ is subadditive, 
Proposition~\ref{prop:Q:subadditive} implies that 
$d_{Q_i}(A)\ge 1$, for   every vertex set $A$ with $\{x,y\}\subseteq A\subsetneq V(Q_i)$.
Thus  $Q_i/\{x,y\}$ is connected and hence $V(Q_i)\subseteq V(C_0)$.
In other words, for every $xy\in E(C_0)$, there is an $\ell_i$-rigid subgraph of $F_i\cap C_0$ including $x$ and $y$.
Since $C_0$ is connected and $\ell_1$ is intersecting supermodular, all vertices of $C_0$ must lie in an $\ell_1$-partition-connected subgraph of $F_1\cap C_0$. 
Thus  $F [V(C_0)]$ itself must be   $\ell_1$-partition-connected and also the edge set of $F [V(C_0)]$ is a subset of $E(C_0)$.
For every edge  $xy\in E(F)$ with $x,y\in V(C_0)$, 
by the above-mentioned claim, there is a minimal  $\ell_2$-rigid subgraph $Q$ of $F_2\cap G_0$  including $x$ and $y$.
As we observed above, one can conclude that  $E(Q)\subseteq E(C_0)$.
Hence the proof is  completed.
}\end{proof}
%
%
%
%
%
%
%
%
%
%
%
%
%
\section{Packing   spanning  partition-connected  and  spanning rigid  subgraphs}
\label{sec:partition-rigid:graph}
The following theorem presents a sufficient connectedness condition 
for the existence of a packing consists of  a spanning  $l$-partition-connected subgraph and  a spanning $\ell$-rigid  subgraph.
\begin{thm}\label{thm:partition-connected-rigid}
{Let $G$ be a graph, let $l$ be a nonincreasing  intersecting supermodular  nonnegative   integer-valued function on subsets of $V(G)$, and let $\ell$ be a $2$-intersecting supermodular  subadditive nonnegative   integer-valued function on subsets of $V(G)$.
If for each vertex $v$, $d_G(v)\ge 2\ell(v)+2l(v)$ and  for any two disjoint vertex  sets $A$ and $B$ with $A\cup B\subsetneq V(G)$
 and  $e_G(A\cup B)>\sum_{v\in A\cup B} \ell(v)-\ell(A\cup B)$,
$$d_{G-B}(A) \ge
2\ell(A\cup B)-\sum_{v\in B} \ell(v)+
 \begin{cases}
0,	&\text{when $A=\emptyset$};\\
2l(A\cup B),	&\text{when $A\neq \emptyset$},
\end {cases}$$
then $G$ can be decomposed into a spanning $l$-partition-connected  subgraph  and a spanning $\ell$-rigid  subgraph and 
  also a given arbitrary edge set of size at most $l(G)+\ell(G)$.
}\end{thm}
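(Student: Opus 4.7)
The plan is to suppose for contradiction that the claimed decomposition does not exist, select a maximum packing inside $G\setminus E$ (where $E$ is the given excluded edge set of size at most $l(G)+\ell(G)$), invoke Theorem~\ref{thm:generalized:D} to extract a refining partition, and then use the connectivity hypothesis in a double-layered counting argument that mirrors the proof of Theorem~\ref{thm:sufficient:rigid}.

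Concretely, I would choose edge-disjoint spanning subgraphs $F,\mathcal{F}$ of $G\setminus E$ with $F$ being $l$-sparse, $\mathcal{F}$ being $\ell$-sparse, and $|E(F\cup\mathcal{F})|$ as large as possible. Theorem~\ref{thm:generalized:D}, applied to $G\setminus E$ with the functions $l$ and $\ell$, then yields a partition $P$ of $V(G)$ such that (1) every $F[A]$ with $A\in P$ is $l$-partition-connected, (2) no edge of $E(G\setminus E)\setminus E(F\cup\mathcal{F})$ joins different parts of $P$, and (3) for every $xy\in E(G\setminus E)\setminus E(\mathcal{F})$ with $x,y$ in the same $A\in P$ there is some $X$ with $\{x,y\}\subseteq X\subseteq A$ such that $\mathcal{F}[X]$ is $\ell$-rigid. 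For each $A\in P$ let $\mathcal{A}_A$ be the collection of vertex sets of the maximal $\ell$-rigid subgraphs of $\mathcal{F}[A]$, and separate those $X\in\mathcal{A}_A$ that are \emph{tight} in $G\setminus E$ (meaning $e_{G\setminus E}(X)=\sum_{v\in X}\ell(v)-\ell(X)$) from the rest. Following the template in the proof of Theorem~\ref{thm:sufficient:rigid}, I would then form a single combined collection $\mathcal{P}$ consisting of the non-tight maximal $\ell$-rigid cells across all parts, together with singletons for those vertices that none of these cells cover. For each $X\in\mathcal{P}$ I split $X=X_A\sqcup X_B$ where $X_B$ is the subset of vertices of $X$ that also belong to some other member of $\mathcal{P}$, and I apply the hypothesis to the disjoint pair $(X_A,X_B)$.

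Applying the assumed lower bound on $d_{G-X_B}(X_A)$ to every $X\in\mathcal{P}$, with the $A=\emptyset$ branch absorbing $X$'s in which every vertex is shared and the $A\neq\emptyset$ branch (carrying the extra $2l(A\cup B)$ term) absorbing the rest, the standard double-counting inequality
\[
e_{G\setminus E}(\mathcal{P})\ \geq\ \frac{1}{2}\sum_{X\in\mathcal{P}}d_{G-X_B}(X_A)-|E|
\]
combined with $e_{F\cup\mathcal{F}}(\mathcal{P})=e_{G\setminus E}(\mathcal{P})$ (a consequence of properties~2 and~3) should give
\[
|E(F\cup\mathcal{F})|\ \geq\ \sum_{v\in V(G)} l(v)-l(G)\ +\ \sum_{v\in V(G)}\ell(v)-\ell(G).
\]
Since $F$ is $l$-sparse and $\mathcal{F}$ is $\ell$-sparse on $V(G)$, the reverse inequalities $|E(F)|\leq\sum_v l(v)-l(G)$ and $|E(\mathcal{F})|\leq\sum_v\ell(v)-\ell(G)$ hold, so both must be equalities. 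This forces $F$ to be spanning $l$-partition-connected (in fact minimally so) and $\mathcal{F}$ to be spanning $\ell$-rigid, contradicting the assumption.

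The main obstacle is bookkeeping the two simultaneous structures. The outer partition $P$ supplies the partition-connectedness of $F$, while the refined collection $\mathcal{P}$ inside each part supplies the rigidity of $\mathcal{F}$; the extra $2l(A\cup B)$ in the hypothesis is exactly what allows the partition-connected contribution to be extracted on top of the Theorem~\ref{thm:sufficient:rigid}-style count for rigidity. Subtleties that must be checked include the degenerate case where some $X\in\mathcal{P}$ equals $V(G)$ (which would make the hypothesis inapplicable, but corresponds to a favorable situation handled separately), verifying that excluding the edge set $E$ costs only a $-|E|$ correction that is absorbed into the final $-l(G)-\ell(G)$, and confirming that the split into the $A=\emptyset$ and $A\neq\emptyset$ branches matches the overlap pattern of $\mathcal{P}$ so that every vertex is accounted for exactly the correct number of times.
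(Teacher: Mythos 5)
Your plan follows the paper's proof essentially step for step: a maximum packing $F,\mathcal{F}$ in $G\setminus E$, the partition $P$ from Theorem~\ref{thm:generalized:D}, the collection $\mathcal{P}$ of non-tight maximal $\ell$-rigid cells plus uncovered singletons, the $X_A/X_B$ split fed into the connectivity hypothesis via $e_{G\setminus E}(\mathcal{P})\ge\frac{1}{2}\sum_{X\in\mathcal{P}}d_{G-X_B}(X_A)-|E|$, and the final count forcing $|E(F)|=\sum_{v}l(v)-l(G)$ and $|E(\mathcal{F})|=\sum_{v}\ell(v)-\ell(G)$. The one substantive step you leave implicit is the paper's Claim that every part $Q\in P$ contains some $X\in\mathcal{P}$ with $X\subseteq Q$ and $X_A\neq\emptyset$ (proved using $\sum_{v\in X}\ell(v)\ge 2\ell(X)$ and the edge-disjointness of the maximal rigid cells), which together with $l$ being nonincreasing and nonnegative yields $\sum_{X\in\mathcal{P},\,X_A\neq\emptyset}l(X)\ge\sum_{Q\in P}l(Q)$ and thereby matches the $l$-terms gained from the hypothesis against those spent in $\sum_{Q\in P}e_F(Q)=\sum_{Q\in P}\bigl(\sum_{v\in Q}l(v)-l(Q)\bigr)$.
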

\begin{proof}
{Let $E$ be an edge set of size at most $l(G)+\ell(G)$.
Let $F$ and $\mathcal{F}$ be two edge-disjoint spanning subgraphs of $G\setminus E$ with the maximum $|E(F)|+|E(\mathcal{F})|$
such that $F$ is $l$-sparse and $\mathcal{F}$ is $\ell$-sparse.
Let $P$ be a partition of $V(G)$ with the properties described in Theorem~\ref{thm:generalized:D}.
Define  $\mathcal{A}$ to be the collection  of all vertex sets of the  maximal $\ell$-rigid subgraphs of all graphs $\mathcal{F}[X]$, 
where $X\in P$.
We may assume that $V(G)\notin \mathcal{A}$.
Let  $\mathcal{A}_0$  be the collection  of all vertex sets  $X$ in $\mathcal{A}$
with $e_G(X)=\sum_{v\in X}\ell(v)-\ell(X)$.
Define $\mathcal{P}$ be the collection of all vertex sets  in $\mathcal{A}\setminus \mathcal{A}_0$
along with the vertex sets $\{v\}$ with  $v\in V(G)\setminus \cup_{X\in \mathcal{A}\setminus \mathcal{A}_0}X$.
For any $X\in \mathcal{P}$, 
define $X_B$ to be the set of all vertices $v$ which appears in at least two vertex sets of $\mathcal{P}$,
 and set $X_A=X\setminus X_B$.
It is easy to see that
\begin{equation}\label{eq:base}
\sum_{X\in \mathcal{P}}\sum_{v\in X}\ell(v)
-\frac{1}{2}\sum_{X\in \mathcal{P}} \sum_{v\in X_B}\ell(v) \ge
 \sum_{v\in V(G)}\ell(v).
\end{equation}
For every $X\in \mathcal{A}_0$, we have
$e_{G}(X) =e_\mathcal{F}(X) $, and so
 for every $xy\in E(F)$ with $x,y\in A\in P$,
there must be an $\ell$-rigid subgraph of $\mathcal{F}$  including $x$ and $y$  whose vertex set is a subset of $A$.
Thus  items (2) and (3) of Theorem~\ref{thm:generalized:D} can imply that
\begin{equation}\label{eq:simplegraph-application}
e_{G}(\mathcal{P})= e_{\mathcal{F}}(\mathcal{P}) + e_F(P).
\end{equation}
By the assumption,
$$e_{G\setminus E}(\mathcal{P})\ge
\frac{1}{2}\sum_{X\in \mathcal{P}}d_{G- X_B}(X_A)-|E|
\ge   \sum_{X\in \mathcal{P}}\big(\ell(X)-\frac{1}{2}\sum_{v\in X_B}\ell(v)\big)+ 
\sum_{ X\in \mathcal{P}, X_A\neq \emptyset}l(X)-l(G)-\ell(G).$$
Now, we prove the following claim.
%
\vspace{2mm}\\
{\bf Claim.} 
If $Q\in P$, then there is a vertex set $X\in \mathcal{P}$ with $X\subseteq Q$ and  $X_A\neq \emptyset$.
\vspace{2mm}\\
{\bf Proof of Claim.} 
Suppose, by way of contradiction, that every vertex $v$ of $Q$ appears in at least two vertex sets $X$ in $\mathcal{P}$ 
with $X\subseteq Q$.
Note that $\mathcal{F}[Q]$ is not $\ell$-rigid and also the  $\ell$-rigid subgraphs 
$\mathcal{F}[X]$ with $X\in \mathcal{P}$ and  $X\subseteq Q$ are edge-disjoint.
Thus 
$$\sum_{v\in Q}\ell(v)-\ell(Q)> 
e_{\mathcal{F}}(Q)\ge 
\sum_{X\in \mathcal{P}, X\subseteq Q}(\sum_{v\in X}\ell(v)-\ell(X))= 
\sum_{X\in \mathcal{P}, X\subseteq Q}(\frac{1}{2}\sum_{v\in X}\ell(v)+\frac{1}{2}\sum_{v\in X} \ell(v)-\ell(X)).$$
Since $\sum_{v\in X} \ell(v)\ge 2\ell(X)$, one can conclude that
$$\sum_{v\in Q}\ell(v)-\ell(Q)> \sum_{v\in Q}\ell(v)
+\sum_{X\in \mathcal{P}, X\subseteq Q}(\frac{1}{2}\sum_{v\in X} \ell(v)-\ell(X))\ge 
\sum_{v\in Q}\ell(v),$$   
which implies   $ \ell(Q)< 0$.  This is  a contradiction. Hence the claim holds.
\vspace{5mm}
\\
Since $l$ is nonincreasing and nonnegative, by the above-mentioned claim we must have
$$\sum_{X\in \mathcal{P}, X_A\neq \emptyset}l(X)\ge \sum_{Q\in P}l(Q),$$
which implies that
\begin{equation}\label{eq:P:R:1}
e_{G\setminus E}(\mathcal{P})\ge
   \sum_{X\in  \mathcal{P}}\ell(X)-\frac{1}{2}\sum_{X\in \mathcal{P}}\sum_{v\in X_B} \ell(v)+\sum_{Q\in P}l(Q)-l(G)-\ell(G).
\end{equation}
On the other hand,
\begin{equation}\label{eq:P:R:2}
|E(\mathcal{F})|=
e_{\mathcal{F}}(\mathcal{P}) +
\sum_{X\in \mathcal{P}}e_{\mathcal{F}}(X) = 
e_{G\setminus E}(\mathcal{P})-e_{F }(P)+
\sum_{X\in \mathcal{P}}(\sum_{v\in X}\ell(v)-\ell(X)).
\end{equation}
Also, 
\begin{equation}\label{eq:P:R:3}
|E(F)|=e_{F}(P)+
\sum_{Q\in P}e_{F}(Q)= 
e_{F}(P)+
\sum_{Q\in P}(\sum_{v\in Q} l(v)-l(Q))=e_F(P)+\sum_{v\in V(G)}l(v)-\sum_{Q\in P}l(Q).
\end{equation}
Therefore, Relations~(\ref{eq:base}),~(\ref{eq:simplegraph-application}),~(\ref{eq:P:R:1}),~(\ref{eq:P:R:2}), and~(\ref{eq:P:R:3}) can conclude that
$$|E(F)|+|E(\mathcal{F})|
\ge   \sum_{v\in V(G)}(l(v)+\ell(v))-l(G)-\ell(G).$$
Thus we must have $|E(F)|=\sum_{v\in V(G)}l(v)-l(G)$ and  $|E(\mathcal{F})|=\sum_{v\in V(G)}\ell(v)-\ell(G)$.  
Hence  $F$ is $l$-partition-connected and $\mathcal{F}$ is $\ell$-rigid and the proof is completed.
}\end{proof}
\begin{remark}
{In the above-mentioned theorem we could reduce  the needed lower bound by  $2l(G)+2\ell(G)-2|E|$ 
for any two disjoint  vertex sets $A$ and $B$ with $|A\cup B|=|V(G)|-1$, 
where $E$ is the give edge set of size at most $l(G)+\ell(G)$.
This refined version can imply Corollary 1.8 in~\cite{Gu(2018)}.
}\end{remark}
%
%
%
%
%
%
%
%
%
The following corollary is an application  of Theorem~\ref{thm:partition-connected-rigid} which can help us to impose a bound on degrees.
\begin{cor}\label{cor:partition-connected-rigid:degrees}
{Let $G$ be a graph, let $l$ be a nonincreasing  intersecting supermodular  nonnegative   integer-valued function on subsets of $V(G)$, and let $\ell$ be a $2$-intersecting supermodular  subadditive nonnegative   integer-valued function on subsets of $V(G)$.
If for each vertex $v$, $d_G(v)\ge 2\ell(v)+2l(v)$ and  for any two disjoint vertex  sets $A$ and $B$ with $A\cup B\subsetneq V(G)$
 and  $e_G(A\cup B)>\sum_{v\in A\cup B} \ell(v)-\ell(A\cup B)$,
$$d_{G-B}(A) \ge
2\ell(A\cup B)-\sum_{v\in B} \ell(v)+
 \begin{cases}
0,	&\text{when $A=\emptyset$};\\
2l(A\cup B),	&\text{when $A\neq \emptyset$}.
\end {cases}$$
then $G$ has  a spanning subgraph $H$ 
containing a packing of a spanning $l$-partition-connected subgraph and a
 spanning $l$-rigid subgraph such that for each vertex $v$, 
$$d_{H}(v) \le \lceil\frac{d_G(v)}{2} \rceil+l(v)+\ell(v).$$
}\end{cor}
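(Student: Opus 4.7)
The plan is to reduce Corollary~\ref{cor:partition-connected-rigid:degrees} to Theorem~\ref{thm:partition-connected-rigid} by first invoking the theorem and then adjusting the resulting packing via matroid exchange swaps to enforce the per-vertex bound $b(v):=\lceil d_G(v)/2\rceil+l(v)+\ell(v)$.

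First I would apply Theorem~\ref{thm:partition-connected-rigid} to $G$ with the empty excluded set, obtaining edge-disjoint spanning subgraphs $F_0$ (minimum $l$-partition-connected) and $\mathcal{F}_0$ (minimum $\ell$-rigid); write $H_0=F_0\cup\mathcal{F}_0$. The hypothesis $d_G(v)\ge 2\ell(v)+2l(v)$ gives $\sum_v b(v)\ge 2\sum_v(l(v)+\ell(v))\ge 2|E(H_0)|=\sum_v d_{H_0}(v)$, so the total degree budget across all vertices is already adequate; the remaining task is to redistribute excess degree at individual vertices. A smooth orientation of $G$ (which exists by a standard Eulerian argument) can be fixed at this stage to provide a natural pool of ``spare'' non-$H$ edges whose endpoints are well spread.

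Second I would run a swap loop controlled by the potential $\Phi(H):=\sum_v\max\{0,d_H(v)-b(v)\}$. While $\Phi(H)>0$, pick a vertex $v$ with $d_H(v)>b(v)$ and search for a non-$H$ edge $e'\in E(G)\setminus E(H)$ incident to a vertex $u$ with $d_H(u)<b(u)$, such that inserting $e'$ into $F$ (respectively $\mathcal{F}$) creates a minimal $l$-rigid (respectively $\ell$-rigid) subgraph containing some $H$-edge $e$ incident to $v$. By Proposition~\ref{prop:xGy-exchange}, the replacement $F-e+e'$ (or $\mathcal{F}-e+e'$) preserves $l$-sparsity (or $\ell$-sparsity), and since the edge count is unchanged, $F$ remains $l$-partition-connected and $\mathcal{F}$ remains $\ell$-rigid. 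Each such swap strictly decreases $\Phi$, so the loop terminates with $\Phi(H)=0$, yielding the desired packing $H=F\cup\mathcal{F}$ with $d_H(v)\le b(v)$ at every vertex.

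The main obstacle is proving the existence of a valid swap whenever $\Phi(H)>0$: for an excess vertex $v$ we must exhibit a non-$H$ edge $e'$ of $G$ incident to a deficit vertex $u$ such that the minimal rigid subgraph of $F+e'$ or $\mathcal{F}+e'$ passes through $v$. My plan for this is to combine Theorem~\ref{thm:generalized:D}, which describes the structure of maximum edge-disjoint sparse packings via a canonical partition of $V(G)$, with the strong cut hypothesis of the corollary: the hypothesis guarantees abundant non-$H$ edges at every vertex, and the partition structure then lets us route one of them from a deficit endpoint $u$ through $v$ via a sparsity-forced circuit. Verifying this routing step, and in particular showing that we never get ``stuck'' in a configuration with excess mass but no admissible swap, is the technical heart of the argument.
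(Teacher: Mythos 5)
Your plan has a genuine gap at exactly the point you flag as its technical heart, and the difficulty is worse than you suggest. First, the claim that each swap strictly decreases $\Phi$ is false as stated: replacing an $H$-edge $e$ incident to the excess vertex $v$ by a non-$H$ edge $e'$ only lowers $\Phi$ if the gain at $v$ is not cancelled at the endpoints of $e'$; you arrange for one endpoint $u$ of $e'$ to be a deficit vertex, but the other endpoint of $e'$ may already be at or above its budget, in which case $\Phi$ does not decrease. Second, and more seriously, no argument is given that an admissible swap exists at all: there is no reason why some non-$H$ edge at a deficit vertex should close a sparsity circuit passing through a prescribed excess vertex $v$. Proposition~\ref{prop:xGy-exchange} only lets you exchange \emph{within} the minimal rigid subgraph created by $e'$, and that subgraph can be confined to a small vertex set far from $v$; moreover Theorem~\ref{thm:generalized:D} describes the canonical partition of a \emph{maximum} sparse packing, whereas your $H_0$ is merely a packing meeting the two size lower bounds, so the structural information you want to invoke for the routing step is not available. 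As written, the argument can get stuck with positive $\Phi$ and no admissible swap.

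The paper's proof sidesteps local exchanges entirely with a global trick you may want to adopt. Define $l'(v)=\lfloor d_G(v)/2\rfloor-l(v)-\ell(v)$ on singletons and $l'(A)=0$ for $|A|\ge 2$; one checks that $l+l'$ is still nonincreasing, intersecting supermodular and nonnegative, and that the hypotheses of the corollary are precisely the hypotheses of Theorem~\ref{thm:partition-connected-rigid} for the pair $(l+l',\ell)$, since $l'$ vanishes on all sets of size at least two. Applying Theorem~\ref{thm:partition-connected-rigid} and then splitting the $(l+l')$-partition-connected piece via Theorem~\ref{thm:main:partition-connected} decomposes $G$ into a spanning $l'$-partition-connected subgraph $H'$, a spanning $l$-partition-connected subgraph $H_1$, and a spanning $\ell$-rigid subgraph $H_2$. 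The partition $\{\{v\},V(G)\setminus\{v\}\}$ forces $d_{H'}(v)\ge l'(v)$, and since $H'$ is edge-disjoint from $H=H_1\cup H_2$ this yields $d_H(v)\le d_G(v)-l'(v)=\lceil d_G(v)/2\rceil+l(v)+\ell(v)$ at once. In short: the degree bound is enforced by packing a third ``absorbing'' partition-connected subgraph whose connectivity guarantees a degree \emph{lower} bound, not by post-processing the packing with exchanges.
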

\begin{proof}
{Define $l'(v)=\lfloor \frac{d_G(v)}{2}\rfloor-l(v)-\ell(v)$, for each vertex $v$
so that $d_G(v)\ge 2l'(v)+2l(v)+2\ell(v)$. Define $l'(A)=0$ for every vertex set $A$ with $|A|\ge 2$.
By applying Theorems~\ref{thm:partition-connected-rigid} and~\ref{thm:main:partition-connected}, 
the graph $G$
 can be decomposed into a spanning $l'$-partition-connected subgraph $H'$,
a spanning $l$-partition-connected subgraph $H_1$,
 and a spanning $\ell$-rigid subgraph $H_2$.
Define $H=H_1\cup H_2$. 
For each vertex $v$, we have   $d_{H'}(v)\ge l'(G-v)+l'(v)-l'(G)=l'(v)$.
This implies that $d_H(v) = d_G(v)-d_{H'}(v)\le \lceil\frac{d_G(v)}{2} \rceil+l(v)+\ell(v)$.
}\end{proof}
%
%
%
%
\subsection{Further improvements on  connectivity requirements}
In this subsection, 
we shall introduce another step toward  improving Theorem~\ref{thm:partition-connected-rigid} 
as the following stronger but more complicated version.
This result  improves the needed  connectivity requirements a little.
\begin{thm}\label{thm:epsilon:partition-connected-rigid}
{Let $G$ be a graph, let $l$ be a nonincreasing  intersecting supermodular  nonnegative   integer-valued function on subsets of $V(G)$, and let $\ell$ be a $2$-intersecting supermodular  subadditive nonnegative   integer-valued function on subsets of $V(G)$.
Define  $\lambda$ to be the minimum of all $|X|$ taken over all vertex sets $X$ with $e_G(X)> \sum_{v\in X}\ell(v)-\ell(X)$.
Take $\phi$  to be a nonincreasing   real function on subsets of $V(G)$ with  $0\le \phi \le 1$.
If for each vertex $v$, $d_G(v)\ge 2\ell(v)+2l(v)$ and  for any two disjoint vertex  sets $A$ and $B$ with $A\cup B\subsetneq V(G)$
 and  $e_G(A\cup B)>\sum_{v\in A\cup B} \ell(v)-\ell(A\cup B)$,
$$d_{G-B}(A) +\epsilon(A\cup B)\ge
2\ell(A\cup B)-\sum_{v\in B} \ell(v)+l(A\cup B)\times
 \begin{cases}
2,	&\text{when $B=\emptyset$};\\
\frac{1}{\lambda}\phi(A\cup B) ,	&\text{when $A=\emptyset$};\\
2-\phi(A\cup B),	&\text{when $A\neq \emptyset$ and $B\neq \emptyset$},
\end {cases}$$
then $G$ can be decomposed into a spanning $l$-partition-connected  subgraph  and a spanning $\ell$-rigid  subgraph and 
  a given arbitrary  edge set $M$ of size at most  $l(G)+\ell(G)$,
where
$\epsilon(X)=2l(G)+2\ell(G)-2|M| $ for every vertex set $X$ with $|X|=|V(G)|-1$; and  $\epsilon(X)=0$ otherwise.
}\end{thm}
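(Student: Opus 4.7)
The plan is to mirror the proof of Theorem~\ref{thm:partition-connected-rigid}, keeping track of the additional slack provided by the parameters $\phi$, $\lambda$, and $\epsilon$. I would first fix the given edge set $M$ of size at most $l(G)+\ell(G)$, and let $F$ and $\mathcal{F}$ be edge-disjoint spanning subgraphs of $G\setminus M$ with maximum $|E(F)|+|E(\mathcal{F})|$ such that $F$ is $l$-sparse and $\mathcal{F}$ is $\ell$-sparse. Applying Theorem~\ref{thm:generalized:D} to the pair $(F,\mathcal{F})$ yields a partition $P$ of $V(G)$ with the three stated properties. I would then form $\mathcal{A}$, $\mathcal{A}_0$, $\mathcal{P}$, and the sets $X_A,X_B$ exactly as in the proof of Theorem~\ref{thm:partition-connected-rigid}, and assume toward a contradiction that $V(G)\notin\mathcal{A}$. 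The key new observation is that every $X\in\mathcal{A}\setminus\mathcal{A}_0$ satisfies $|X|\ge\lambda$, because $e_G(X)>\sum_{v\in X}\ell(v)-\ell(X)$ by the definition of~$\mathcal{A}_0$.

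The global inequalities analogous to~(\ref{eq:base}) and~(\ref{eq:simplegraph-application}) are unaffected by the refinement. The edge-counting step now applies the hypothesis separately in the three cases to each $X\in\mathcal{P}$, using $A=X_A$ and $B=X_B$ (so $A\cup B=X\subsetneq V(G)$). For $X$ with $X_B=\emptyset$ I would pick up $2l(X)$; for $X$ with $X_A\ne\emptyset$ and $X_B\ne\emptyset$ I would pick up $(2-\phi(X))l(X)$; and for $X$ with $X_A=\emptyset$ the hypothesis, applied with $A=\emptyset$, becomes a constraint of the form $\epsilon(X)\ge 2\ell(X)-\sum_{v\in X}\ell(v)+\tfrac{\phi(X)}{\lambda}l(X)$ which contributes through the $\epsilon$-correction. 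Summing with the usual double-counting factor $\tfrac12$ and absorbing the $\epsilon$-terms via the $|V(G)|-1$ boundary case yields
\[
e_{G\setminus M}(\mathcal{P}) \;\ge\; \sum_{X\in\mathcal{P}}\Bigl(\ell(X)-\tfrac12\!\sum_{v\in X_B}\ell(v)\Bigr) \;+\; T \;-\; \ell(G)-l(G),
\]
where $T=\tfrac12\sum_{X\in\mathcal{P}}c(X)\,l(X)$ and $c(X)\in\{2,\,2-\phi(X),\,\phi(X)/\lambda\}$ depending on the case of~$X$.

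The crux is proving the refined claim $T\ge\sum_{Q\in P}l(Q)$, parallel to the claim in the proof of Theorem~\ref{thm:partition-connected-rigid}. I would fix $Q\in P$, set $\mathcal{P}_Q=\{X\in\mathcal{P}:X\subseteq Q\}$, and argue case by case. If some $X\in\mathcal{P}_Q$ has $X_B=\emptyset$ (case~1), then $l(X)\ge l(Q)$ by monotonicity of $l$ and this $Q$ is handled. If every $X\in\mathcal{P}_Q$ has $X_A=\emptyset$, i.e.\ every vertex of $Q$ is shared among at least two sets of $\mathcal{P}$, then the computation of the original claim forces $\ell(Q)<0$, contradicting $\ell\ge 0$. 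In the remaining mixed scenario there are case-3 sets (each with a nonempty $X_B\subseteq Q$ of shared vertices) and possibly case-2 sets. Using $\phi$ nonincreasing (so $\phi(X)\ge\phi(Q)$ for $X\subseteq Q$) together with $|X|\ge\lambda$ for every case-2 set, I would charge the $\tfrac{\phi(X)}{2}l(X)$ deficit of each case-3 set to the $\tfrac{\phi(X)}{2\lambda}l(X)$ contributions of the shared vertices, observing that each case-2 set supplies at least $\lambda$ vertices' worth of such contributions.

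The main obstacle is precisely this accounting in the mixed case: pairing case-2 sets with the case-3 sets whose deficits they absorb, and verifying that the coefficient $\phi(X)/\lambda$ together with $|X|\ge\lambda$ makes the arithmetic close exactly. Once the refined claim is established, the remainder of the argument follows the pattern of Theorem~\ref{thm:partition-connected-rigid}: combining the refined bound on $e_{G\setminus M}(\mathcal{P})$ with the formulas $|E(\mathcal{F})|=e_{\mathcal{F}}(\mathcal{P})+\sum_{X\in\mathcal{P}}(\sum_{v\in X}\ell(v)-\ell(X))$ and $|E(F)|=e_F(P)+\sum_{v\in V(G)}l(v)-\sum_{Q\in P}l(Q)$ and using~(\ref{eq:base}) and~(\ref{eq:simplegraph-application}) yields $|E(F)|+|E(\mathcal{F})|\ge\sum_{v\in V(G)}(l(v)+\ell(v))-l(G)-\ell(G)$. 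Equality throughout then forces $F$ to be $l$-partition-connected and $\mathcal{F}$ to be $\ell$-rigid, and $F$, $\mathcal{F}$, $M$ give the required decomposition, contradicting the assumption $V(G)\notin\mathcal{A}$ and finishing the proof.
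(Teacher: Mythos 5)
Your overall route is the paper's own: the paper proves this theorem simply by repeating the proof of Theorem~\ref{thm:partition-connected-rigid} with minor modifications, and everything you set up (the maximal pair $(F,\mathcal{F})$ in $G\setminus M$, the partition $P$ from Theorem~\ref{thm:generalized:D}, the families $\mathcal{A}$, $\mathcal{A}_0$, $\mathcal{P}$, the splitting $X=X_A\cup X_B$, and the closing chain of inequalities) matches that template, including the correct disposal of the cases ``some $X\subseteq Q$ has $X_B=\emptyset$'' and ``every $X\subseteq Q$ has $X_A=\emptyset$''. The difficulty is that the only genuinely new content of this theorem is the refined Claim --- that the weakened coefficients $2-\phi$ and $\frac{1}{\lambda}\phi$ still sum to at least $2\sum_{Q\in P}l(Q)$ --- and that is precisely the step you leave open, explicitly calling it ``the main obstacle''. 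Moreover, the charging mechanism you sketch (each $X_A=\emptyset$ set ``supplies at least $\lambda$ vertices' worth of contributions'') is not the right accounting: such a set contributes $\frac{\phi(X)}{2\lambda}l(X)$ in total, and distributing that quantity over its $\ge\lambda$ vertices only makes each share smaller, so nothing is gained.

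What the paper actually invokes at this point is a count of \emph{sets}, not of vertices: if $Q\in P$ contains exactly one member $X_0$ of $\mathcal{P}$ with $X_{0,A}\neq\emptyset$ (the only problematic situation, since two such members each contribute $(1-\tfrac{\phi(X)}{2})l(X)\ge\tfrac12 l(Q)$ and a member with $X_B=\emptyset$ contributes $l(Q)$ outright), then $Q$ contains at least $\lambda$ members of $\mathcal{P}$ with $X_A=\emptyset$. To see this, take $v\in X_{0,B}$; it lies in a second member $X_1$, which has $X_{1,A}=\emptyset$, and $X_1$ cannot be a singleton, so $X_1\in\mathcal{A}\setminus\mathcal{A}_0$ and hence $|X_1|\ge\lambda$ by the definition of $\lambda$. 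Each of the $\ge\lambda$ vertices of $X_1$ lies in a further member of $\mathcal{P}$, and since distinct maximal $\ell$-rigid sets share at most one vertex (Proposition 2.1 with $c=2$), these witnesses are pairwise distinct; all but possibly $X_0$ have $X_A=\emptyset$. Summing $\frac{\phi(X)}{2\lambda}l(X)\ge\frac{\phi(Q)}{2\lambda}l(Q)$ over these $\lambda$ sets recovers the $\tfrac{\phi}{2}l$ deficit left by $X_0$. Without this (or an equivalent) counting argument your inequality $T\ge\sum_{Q\in P}l(Q)$ is unestablished, so the proposal as written does not prove the theorem; with it, your outline closes exactly as in Theorem~\ref{thm:partition-connected-rigid}.
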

\begin{proof}
{The proof follows with the same arguments of Theorem~\ref{thm:partition-connected-rigid} with only  minor modifications. 
In fact, if for a vertex set  $Q\in P$, there is only one proper vertex subset $X$ of $Q$ with  $X\in \mathcal{P}$ and $X_A\neq \emptyset$, then there are at least $\lambda$  proper vertex subsets $X$ of $Q$ with $X\in \mathcal{P}$ and  $X_A= \emptyset$.
}\end{proof}
\begin{cor}
{Let $G$ be a simple graph and let $k$ be an  integer with $k\ge 2$.
If $G$ is $4k$-edge-connected, and $G-B$ is $(4k-1-k|B|)$-edge-connected for every vertex set   $B$,
 then $G$ has a spanning tree $T$  such that  $G- E(T)$  is $k$-rigid.
}\end{cor}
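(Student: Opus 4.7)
The plan is to apply Theorem~\ref{thm:epsilon:partition-connected-rigid} with $l\equiv 1$ on nonempty vertex sets, with $\ell=l_{k,2k-1}$, with the constant function $\phi\equiv 1$, and with the empty excluded edge set $M=\emptyset$. Under these choices a spanning $l$-partition-connected subgraph is exactly a spanning tree $T$ of $G$, and a spanning $\ell$-rigid subgraph is a spanning $k$-rigid subgraph; since the two pieces of the decomposition are edge-disjoint and exhaust $E(G)$, the $k$-rigid subgraph produced by the theorem coincides with $G-E(T)$, which is what the corollary asks for.

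First I would pin down the parameter $\lambda$ of Theorem~\ref{thm:epsilon:partition-connected-rigid} by a purely combinatorial argument that crucially uses that $G$ is simple. Singletons trivially fail the strict inequality $e_G(X)>\sum_{v\in X}\ell(v)-\ell(X)$, while for $|X|=s\ge 2$ the inequality reads $e_G(X)>ks-(2k-1)$; combining this with the simple-graph bound $e_G(X)\le \binom{s}{2}$ yields $s^2-(2k+1)s+(4k-2)>0$, whose discriminant is the perfect square $(2k-3)^2$, so the real roots are exactly $s=2$ and $s=2k-1$. Consequently no $s$ with $2\le s\le 2k-1$ can occur, so every qualifying $X$ has $|X|\ge 2k$, i.e., $\lambda\ge 2k$.

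Next I would verify the hypotheses of Theorem~\ref{thm:epsilon:partition-connected-rigid}. The pointwise degree bound reduces to $d_G(v)\ge 2\ell(v)+2l(v)=2k+2$, which is immediate from $\delta(G)\ge 4k$. For the main displayed inequality, applied to disjoint $A,B$ with $A\cup B\subsetneq V(G)$ and the density hypothesis in force (so $|A\cup B|\ge 2k$ by the previous paragraph): when $B=\emptyset$ the inequality reads $d_G(A)\ge 4k$, supplied directly by the $4k$-edge-connectivity of $G$; and when $A$ and $B$ are both nonempty, the choice $\phi\equiv 1$ collapses the right side to $4k-1-k|B|$, and the hypothesis that $G-B$ is $(4k-1-k|B|)$-edge-connected supplies $d_{G-B}(A)$ with this bound (vacuously whenever the quantity is nonpositive).

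The main obstacle I anticipate is the remaining case, $A=\emptyset$ with $B\neq\emptyset$. There the required inequality becomes $\epsilon(B)\ge 4k-2-k|B|+\phi(B)/\lambda$, and the point is to argue that the right-hand side is actually nonpositive, so that the bound $\epsilon(B)\ge 0$ suffices. I would combine $\lambda\ge 2k$ with the forced $|B|\ge 2k$ to estimate $4k-2-k|B|\le 4k-2-2k^2=-2(k-1)^2\le -2$, while the correction term is at most $1/(2k)\le 1/4$ for $k\ge 2$; thus the right-hand side is strictly negative. With every hypothesis of Theorem~\ref{thm:epsilon:partition-connected-rigid} verified, the theorem yields the desired edge-disjoint decomposition $E(G)=E(T)\sqcup E(G-E(T))$ in which $T$ is a spanning tree and $G-E(T)$ is spanning $k$-rigid, which completes the proof.
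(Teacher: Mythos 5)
Your proposal is correct and takes essentially the same route as the paper: the paper likewise deduces from simplicity that every set $X$ with $e_G(X)>k|X|-(2k-1)$ has $|X|\ge 2k$ (so the $A=\emptyset$ case of the connectivity condition is vacuous) and then applies Theorem~\ref{thm:epsilon:partition-connected-rigid} with $\ell=\ell_{k,2k-1}$, $l=l_{1,1}$ and $\phi=1$; your verification of the remaining cases is just a more explicit write-up of the same argument. One small quibble: the spanning tree and the minimally $k$-rigid subgraph produced by the theorem need not exhaust $E(G)$, but this is harmless since $G-E(T)$ contains the spanning minimally $k$-rigid subgraph and $k$-rigidity is preserved under adding edges.
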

\begin{proof}
{Since $G$ has no multiple edges, it is not hard to verify that  for every vertex set $X$ with $e_G(X)>  k|X|-(2k-1)$,  we must have $|X|\ge 2k$  which implies that $k|X|-(4k-2)\ge 2(k-1)^2\ge 1$.
Now, it is enough to apply Theorem~\ref{thm:epsilon:partition-connected-rigid} with $\ell=\ell_{k,2k-1}$, $l=l_{1,1}$, $\lambda=1$, and $\phi =1$.
}\end{proof}
\begin{cor}
{Let $G$ be a simple graph and let $k$ be an  integer with $k\ge 2$.
If $G$ is $(2k+2)$-edge-connected and essentially $4k$-edge-connected, 
and $G-B$ is essentially $(4k-1-k|B|)$-edge-connected for every vertex set   $B$,
 then $G$ has a spanning tree $T$  such that  $G- E(T)$  is $k$-rigid.
}\end{cor}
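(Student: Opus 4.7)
The plan is to apply Theorem~\ref{thm:epsilon:partition-connected-rigid} with $\ell=\ell_{k,2k-1}$, $l=l_{1,1}$, $\phi\equiv 1$, and $M=\emptyset$. Under these choices a spanning $l$-partition-connected subgraph with $n-1$ edges is precisely a spanning tree $T$, and a spanning $\ell$-rigid subgraph is exactly a spanning $k$-rigid subgraph $H$; hence the decomposition $G=T\cup H$ produced by the theorem shows $G-E(T)\supseteq H$ is $k$-rigid. With $M=\emptyset$, the correction term becomes $\epsilon(X)=2l(G)+2\ell(G)=4k$ whenever $|X|=|V(G)|-1$ and is zero otherwise.

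The two elementary ingredients are immediate. The per-vertex requirement $d_G(v)\ge 2\ell(v)+2l(v)=2k+2$ is exactly the hypothesis of $(2k+2)$-edge-connectivity. Since $G$ is simple, the quadratic $m^{2}-(2k+1)m+(4k-2)$, whose roots are $2$ and $2k-1$, shows that $e_G(X)>k|X|-(2k-1)$ forces $|X|\ge 2k$; in particular $\lambda\ge 2k$. Next, for the three-way inequality of the theorem I would consider the cases on $A,B$. When $B=\emptyset$ the required bound is $d_G(A)\ge 4k$, which is delivered by essential $4k$-edge-connectivity when $|V(G)\setminus A|\ge 2$ and by the $\epsilon$-term $4k$ when $|V(G)\setminus A|=1$. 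When $A=\emptyset$ the inequality reduces to $0\ge 4k-2-k|B|+\tfrac{1}{\lambda}$, which is trivial because $|B|\ge 2k$ forces the right-hand side below zero for $k\ge 2$. When both $A,B$ are nonempty and $|A|\ge 2$, essential $(4k-1-k|B|)$-edge-connectivity of $G-B$ supplies $d_{G-B}(A)\ge 4k-1-k|B|$ whenever $|V(G)\setminus(A\cup B)|\ge 2$, and $\epsilon$ again handles the codimension-one remainder.

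The delicate step, and the main obstacle, is the boundary case $|A|=1$ within the third branch, where essential edge-connectivity of $G-B$ is not directly available. Here $|A\cup B|\ge 2k$ together with $|A|=1$ gives $|B|\ge 2k-1$, and simplicity combined with $(2k+2)$-edge-connectivity yields $d_{G-B}(v)\ge d_G(v)-|B|\ge 2k+2-|B|$. The required inequality $2k+2-|B|\ge 4k-1-k|B|$ simplifies to $(k-1)|B|\ge 2k-3$, which holds because $|B|\ge 2k-1$ and $k\ge 2$. This is precisely where the added $(2k+2)$-edge-connectivity hypothesis is consumed, replacing the blanket $4k$-edge-connectivity of the previous corollary on singleton separators by its essential counterpart plus a minimum-degree condition, and completing the verification of all hypotheses of Theorem~\ref{thm:epsilon:partition-connected-rigid}.
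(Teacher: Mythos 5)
Your proof is correct and takes exactly the route the paper intends: this corollary is stated without proof immediately after its non-essential analogue, which the paper proves by applying Theorem~\ref{thm:epsilon:partition-connected-rigid} with $\ell=\ell_{k,2k-1}$, $l=l_{1,1}$, $\phi=1$. Your case analysis --- in particular the singleton-$A$ case with $|B|\ge 2k-1$, which is where the added $(2k+2)$-edge-connectivity hypothesis is actually consumed --- supplies precisely the details the paper leaves to the reader.
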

The next corollary improves Corollary 1.11 in~\cite{Gu(2018)} a little.
\begin{cor}
{Every $6$-connected  essentially $8$-edge-connected simple graph $G$ has a spanning tree $T$  such that  $G- E(T)$  is $2$-connected.
}\end{cor}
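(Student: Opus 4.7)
The plan is to invoke the immediately preceding corollary with $k=2$ and then convert its conclusion from $2$-rigidity into $2$-connectedness by appealing to Corollary~\ref{cor:essentially}. Specifically, the previous corollary with $k=2$ demands that $G$ be $6$-edge-connected, essentially $8$-edge-connected, and that $G-B$ be essentially $(7-2|B|)$-edge-connected for every vertex set $B$. The first two conditions are immediate from our hypotheses, since $6$-vertex-connectedness implies $6$-edge-connectedness and essentially $8$-edge-connectedness is given.

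For the third condition, I would use the standard Menger-type fact that deleting $|B|$ vertices from a $6$-connected graph leaves a $(6-|B|)$-connected graph (whenever enough vertices remain, which is automatic since $|V(G)|\ge 7$), hence a $(6-|B|)$-edge-connected, and therefore essentially $(6-|B|)$-edge-connected, graph. Comparing to the target $7-2|B|$, one has $6-|B|\ge 7-2|B|$ exactly when $|B|\ge 1$. The remaining case $|B|=0$ asks for essentially $7$-edge-connectedness, which is weaker than the given essentially $8$-edge-connectedness; and for $|B|\ge 4$ the bound $7-2|B|\le -1$ makes the condition vacuous. The corner cases where $|V(G)|-|B|$ is too small for a nontrivial essential cut to exist are also vacuous.

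With all three hypotheses verified, the preceding corollary produces a spanning tree $T$ such that $G-E(T)$ is $2$-rigid. Since $G$ has at least seven vertices, Corollary~\ref{cor:essentially} applies with $k=2$, yielding that $G-E(T)$ is $2$-edge-connected and that $(G-E(T))-v$ is $1$-edge-connected, and hence connected, for every vertex $v$. Together with $|V(G)|\ge 3$, these are precisely the defining properties of $2$-connectedness, completing the proof. The only subtlety in this argument is bookkeeping the boundary cases for $|B|$ in condition (iii), which is entirely routine once one observes that the essentially-edge-connectivity requirement is always dominated by the vertex-connectivity of $G-B$ inherited from the $6$-connectedness of $G$.
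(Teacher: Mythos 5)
Your proposal is correct and follows exactly the route the paper intends: the corollary is stated as an immediate specialization of the preceding corollary with $k=2$ (whose hypotheses you verify correctly, the only nontrivial one being that $G-B$ inherits $(6-|B|)$-connectedness and hence essential $(6-|B|)$-edge-connectedness, which dominates $7-2|B|$ for $|B|\ge 1$), combined with Corollary~\ref{cor:essentially} to pass from $2$-rigidity to $2$-connectedness. The paper leaves this derivation implicit, and your bookkeeping of the boundary cases for $|B|$ fills it in accurately.
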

%
%
%
%
%
\section{A necessary and sufficient orientation condition for a graph to be  $\ell$-rigid}
In 1980 Frank formulated   the following  criterion for a graph to be $l$-partition-connected.
\begin{thm}{\rm (\cite{MR579073})}\label{thm:Frank}
{Let $G$ be a   graph and let $l$ 
be an intersecting  supermodular nonnegative integer-valued   function on  subsets of $V(G)$ with $l(\emptyset)=l(G)=0$.
Then $G$ is $l$-partition-connected if and only if it has an $l$-arc-connected orientation.
}\end{thm}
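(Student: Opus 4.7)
The plan is to prove the two directions separately.

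The ``if'' direction is a direct counting argument. Fix an $l$-arc-connected orientation of $G$ and any partition $P$ of $V(G)$. Each edge crossing $P$ enters exactly one part in the orientation, so
\[
e_G(P)\;=\;\sum_{A\in P}d_G^-(A)\;\ge\;\sum_{A\in P}l(A)\;=\;\sum_{A\in P}l(A)-l(G),
\]
using $l(G)=0$ in the last equality.

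For the ``only if'' direction I would induct on $|E(G)|$, orienting one edge at a time. In the base case $|E(G)|=0$, the partition inequality for $P=\{A\}\cup\{\{v\}:v\notin A\}$, combined with $l\ge 0$ and $l(V(G))=0$, forces $l(A)=0$ for every $A\subseteq V(G)$, so the empty orientation is vacuously $l$-arc-connected. For the inductive step I would pick an edge $uv\in E(G)$, orient it (say as $u\to v$), and pass to $(G-uv,l')$ with
\[
l'(A)\;=\;l(A)-\mathbf{1}[v\in A,\ u\notin A]\qquad (A\subseteq V(G)).
\]
Two facts are routine to verify: (i) $l'$ is intersecting supermodular with $l'(V(G))=0$, because the subtracted indicator $A\mapsto\mathbf{1}[v\in A,u\notin A]$ is modular on \emph{all} pairs and vanishes on $V(G)$; (ii) $G-uv$ is $l'$-partition-connected, because both sides of the partition inequality decrease by the same amount (namely $1$ if $uv$ crosses $P$ and $0$ otherwise) when we pass from $(G,l)$ to $(G-uv,l')$. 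Adjoining the arc $u\to v$ to the $l'$-arc-connected orientation of $G-uv$ produced by the inductive hypothesis then yields the required orientation of $G$, since the added arc contributes exactly $\mathbf{1}[v\in A,u\notin A]$ to each $d^-(A)$.

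The main obstacle is that $l'$ may fail to be nonnegative: $l'(A)<0$ occurs precisely when $A$ is a \emph{tight} set (one with $l(A)=0$) containing $v$ but not $u$. To close the induction, the edge $uv$ and its orientation must be chosen to avoid this. The plan is to show by uncrossing that an appropriate edge always exists whenever $l\not\equiv 0$ and $|E(G)|\ge 1$: intersecting supermodularity together with $l\ge 0$ forces the family of tight sets to be closed under intersections and unions of intersecting pairs, so for each vertex $v$ there is a smallest tight set $T_v$ containing $v$, and an orientation $u\to v$ preserves nonnegativity iff $u\in T_v$. If no edge admits such a choice, then every edge joins vertices whose smallest tight sets are incomparable, and uncrossing the maximal $T_v$'s produces a partition of $V(G)$ into tight sets crossed by no edge, contradicting $l$-partition-connectedness at any refinement where some $l(A)>0$. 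Should the uncrossing prove delicate in the fully general setting, the theorem can be deduced instead from the Edmonds--Giles theorem on submodular flows: the orientation polytope $\{x\in[0,1]^{E(G)}:x(\delta^-(A))\ge l(A)\text{ for all }A\subseteq V(G)\}$ is a submodular flow polytope whose non-emptiness is equivalent to the partition inequalities (via LP duality applied to the supermodular cut requirement), and whose integer vertices are exactly the $l$-arc-connected orientations of $G$.
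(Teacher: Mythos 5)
The paper does not actually prove this statement: it is quoted as Frank's orientation theorem from \cite{MR579073} and used as a black box, so your proposal can only be judged on its own terms. The ``if'' direction is correct, and the two reduction facts (i)--(ii) in the inductive step are also fine, although your justification of (i) contains a slip: the indicator $A\mapsto\mathbf{1}[v\in A,\ u\notin A]$ is the in-degree function of a single arc and is \emph{submodular}, not modular (take $A=\{v,w\}$, $B=\{u,w\}$: the values are $1+0$ versus $0+0$). Submodularity is all that is needed for $l-\mathbf{1}[\,\cdot\,]$ to remain intersecting supermodular, so the conclusion of (i) survives.

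The genuine gap is at exactly the point you identify as the main obstacle, and your proposed fix fails because the uncrossing goes the wrong way. For a nonnegative \emph{supermodular} $l$, the inequality $l(A\cap B)+l(A\cup B)\ge l(A)+l(B)=0$ is vacuous and does \emph{not} force the intersection of two tight sets (sets with $l=0$) to be tight; that closure property belongs to nonnegative \emph{sub}modular functions, or to tight sets of the form $d^-(A)=l(A)$ where the submodular in-degree is played against the supermodular demand. Hence a smallest tight set $T_v$ containing $v$ need not exist and your selection rule collapses. Concretely, let $G$ be a triangle on $\{a,b,c\}$ and let $l$ be $1$ on singletons and $0$ on all larger sets; this $l$ is intersecting supermodular, nonnegative, $l(G)=0$, and the triangle is $l$-partition-connected. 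Every $2$-element set is tight, so for any edge and either orientation some tight set contains the head but not the tail (orienting $a\to b$ gives $l'(\{b,c\})=-1$). Thus no first step of your induction preserves nonnegativity, even though the cyclic orientation trivially witnesses the theorem. To salvage the induction you would have to prove a statement that tolerates negative values, replacing the partition condition by the subpartition condition $e_G(\mathcal{P})\ge\sum_{A\in\mathcal{P}}l(A)$, which is a nontrivial reworking; alternatively one of the standard proofs (deficiency/flipping on an arbitrary initial orientation, splitting off, or submodular flows) must be carried out in full. Your closing appeal to Edmonds--Giles is indeed a known route, but as written it is a citation of another major theorem rather than a proof.
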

By applying a special case of  the above-mentioned theorem due to Hakimi~\cite{MR0180501},  we  generalize Frank's result to the following rigid version.
\begin{thm}\label{thm:rigid:orientation:generalized}
{Let $G$ be a graph and let $\ell$ be a  weakly  subadditive nonnegative   integer-valued function on subsets of $V(G)$
with  $\ell(\emptyset)=\ell(G)=0$.
Then  $G$ is minimally $\ell$-rigid if and only if it has an $\ell$-arc-connected orientation such that for each vertex $v$, $d^-_G(v)=\ell(v)$.
}\end{thm}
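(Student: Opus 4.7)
The plan is to derive both directions from the elementary identity
\[
\sum_{v\in A} d^-_G(v) \;=\; d^-_G(A) + e_G(A),
\]
valid in any orientation for every $A\subseteq V(G)$, combined with the classical theorem of Hakimi~\cite{MR0180501} on orientations with prescribed indegrees. First I would observe that, since $\ell(G)=0$, minimal $\ell$-rigidity of $G$ is equivalent to $G$ itself being $\ell$-sparse with $|E(G)|=\sum_{v\in V(G)}\ell(v)$: any proper spanning $\ell$-sparse subgraph $F$ witnessing rigidity would allow one to delete an edge of $E(G)\setminus E(F)$ while preserving rigidity, contradicting minimality.

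For the forward direction, assuming $G$ is minimally $\ell$-rigid, the equality $\sum_v\ell(v)=|E(G)|$ together with the inequality $e_G(S)\le \sum_{v\in S}\ell(v)-\ell(S)\le \sum_{v\in S}\ell(v)$ (the last using $\ell\ge 0$) is exactly Hakimi's hypothesis, so there is an orientation of $G$ with $d^-_G(v)=\ell(v)$ for every $v$. Substituting in the identity above yields, for every $A\subseteq V(G)$,
\[
d^-_G(A) \;=\; \sum_{v\in A}\ell(v) - e_G(A) \;\ge\; \sum_{v\in A}\ell(v) - \Bigl(\sum_{v\in A}\ell(v)-\ell(A)\Bigr) \;=\; \ell(A),
\]
so the orientation is $\ell$-arc-connected. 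For the backward direction, if $G$ admits an $\ell$-arc-connected orientation with $d^-_G(v)=\ell(v)$ for each vertex, the same identity rearranges to $e_G(A)=\sum_{v\in A}\ell(v)-d^-_G(A)\le \sum_{v\in A}\ell(v)-\ell(A)$ for every $A$, so $G$ is $\ell$-sparse; summing the indegrees gives $|E(G)|=\sum_v\ell(v)=\sum_v\ell(v)-\ell(G)$, and by the first paragraph $G$ is minimally $\ell$-rigid.

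There is no real obstacle here: the statement is essentially a translation lemma between the sparse side and the orientation side of the same counting identity, and the only external input is Hakimi's orientation theorem, whose hypothesis is supplied by $\ell$-sparseness together with nonnegativity of $\ell$. The role of weak subadditivity is only implicit (it keeps $\ell$-rigidity meaningful by ensuring $\sum_v\ell(v)\ge \ell(G)=0$), and no use is made of supermodularity, which matches the generality claimed.
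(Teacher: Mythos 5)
Your proof is correct and follows essentially the same route as the paper: both directions rest on the identity $\sum_{v\in A}d^-_G(v)=d^-_G(A)+e_G(A)$, with Hakimi's orientation theorem supplying the prescribed-indegree orientation from $\ell$-sparseness and nonnegativity of $\ell$. Your opening remark making explicit that minimal $\ell$-rigidity of $G$ amounts to $G$ itself being $\ell$-sparse with $|E(G)|=\sum_{v}\ell(v)$ is a small clarification the paper leaves implicit, but the argument is otherwise the same.
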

\begin{proof}
{First assume that $G$ has an $\ell$-arc-connected orientation such that for each vertex $v$, $d^-_G(v)=\ell(v)$.
Obviously, $|E(G)|=\sum_{v\in V(G)}d^-_G(v)=\sum_{v\in V(G)}\ell(v)$.
Furthermore, for every vertex set $A$, we have 
$$e_G(A) =
\sum_{v\in A}d^-_G(v)-d_G^-(A)=
\sum_{v\in A}\ell(v)- d_G^-(A)
\le  
\sum_{v\in A}\ell(v)- \ell(A).$$
Thus $G$ is $\ell$-sparse and hence  minimally $\ell$-rigid.
Now, assume that $G$ is minimally $\ell$-rigid.
Since $G$ is $\ell$-sparse and $\ell$ is nonnegative, for every vertex set $A$,
 $e_G(A)\le \sum_{v\in A}\ell(v)-\ell(A)\le \sum_{v\in A}\ell(v)$.
Since $|E(G)|=\sum_{v\in V(G)}\ell(v)$, the graph  $G$ must have an orientation such that for each vertex $v$, $d^-_G(v)= \ell(v)$, see~\cite[Theorem 4]{MR0180501}.
Therefore, for every vertex set $A$, we  must have 
$$d_G^-(A)=\sum_{v\in A}d^-_G(v)-e_G(A)=  \sum_{v\in A}\ell(v)-e_G(A) \ge \ell(A).$$
Thus the orientation of $G$ is $\ell$-arc-connected. Note that the equality holds only if $G[A]$ is $\ell$-rigid.
}\end{proof}
A combination of  Theorem~\ref{thm:partition-connected-rigid} and~\ref{thm:rigid:orientation:generalized},  
can  conclude the next result.
\begin{thm}
{Let $G$ be a graph, let $l$ be a nonincreasing  intersecting supermodular  nonnegative   integer-valued function on subsets of $V(G)$, and let $\ell$ be a $2$-intersecting supermodular  subadditive nonnegative   integer-valued function on subsets of $V(G)$.
Let $r_1$ and $r_2$ be two nonnegative integer-valued functions on $V(G)$ which  
 $l(G)=\sum_{v\in V(G)}r_1(v)$ and  $\ell(G)=\sum_{v\in V(G)}r_2(v)$, and also $r_2\le \ell$.
If for each vertex $v$, $d_G(v)\ge 2\ell(v)+2l(v)$ and  for any two disjoint vertex  sets $A$ and $B$ with $A\cup B\subsetneq V(G)$
 and  $e_G(A\cup B)>\sum_{v\in A\cup B} \ell(v)-\ell(A\cup B)$,
$$d_{G-B}(A) \ge
2\ell(A\cup B)-\sum_{v\in B} \ell(v)+
 \begin{cases}
0,	&\text{when $A=\emptyset$};\\
2l(A\cup B),	&\text{when $A\neq \emptyset$}.
\end {cases}$$
then $G$ has  an orientation along with two  edge-disjoint spanning subdigraphs $H_1$ and $H_2$
such that $H_1$ is $r_1$-rooted $l$-arc-connected,  $H_2$ is $r_2$-rooted $\ell$-arc-connected,
and for each vertex $v$, 
$d^-_{H_1}(v)=l(v)-r_1(v)$, $d^-_{H_2}(v)=\ell(v)-r_2(v)$, and 
$$d^+_{G}(v) \le \lceil\frac{d_G(v)}{2} \rceil.$$
Furthermore, for a given arbitrary  vertex $u$ the upper bound can be reduced to  $ \lfloor  \frac{d_G(u)}{2}\rfloor$.
}\end{thm}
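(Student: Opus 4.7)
The plan is to combine the degree-controlled decomposition of Corollary~\ref{cor:partition-connected-rigid:degrees} with the rooted forms of the orientation theorems: Theorem~\ref{thm:Frank} on the partition-connected side and Theorem~\ref{thm:rigid:orientation:generalized} on the rigid side. I would first apply Corollary~\ref{cor:partition-connected-rigid:degrees} to $G$ to obtain an edge-disjoint decomposition $G = H_1' \cup H_2' \cup H'$ in which $H_1'$ is $l$-partition-connected, $H_2'$ is $\ell$-rigid, and the residual $H'$ satisfies $d_{H_1' \cup H_2'}(v) \le \lceil d_G(v)/2 \rceil + l(v) + \ell(v)$. By shifting any redundant edges into $H'$, I may further assume that $H_1'$ is minimally $l$-partition-connected, with $|E(H_1')| = \sum_v l(v) - l(G) = \sum_v(l(v) - r_1(v))$, and $H_2'$ is minimally $\ell$-rigid, with $|E(H_2')| = \sum_v \ell(v) - \ell(G) = \sum_v(\ell(v) - r_2(v))$.

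Next, I would orient $H_1'$ via Theorem~\ref{thm:Frank} applied to the intersecting supermodular set function $\hat l$ defined by $\hat l(A) := l(A) - \sum_{v \in A} r_1(v)$ for $A \subsetneq V(G)$ and $\hat l(V(G)) := 0$. The $l$-partition-connectivity of $H_1'$ translates, for every partition of $V(G)$ into at least two parts, into the partition condition for $\hat l$, so Theorem~\ref{thm:Frank} produces an $\hat l$-arc-connected orientation $H_1$, which is precisely an $r_1$-rooted $l$-arc-connected orientation. The count $\sum_v d^-_{H_1}(v) = |E(H_1')| = \sum_v(l(v) - r_1(v))$ paired with $d^-_{H_1}(v) \ge l(v) - r_1(v)$ from the rooted arc-connectivity forces equality at every vertex. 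Symmetrically, since $H_2'$ is $\ell$-sparse with $r_2 \le \ell$ and $|E(H_2')| = \sum_v(\ell(v) - r_2(v))$, Hakimi's theorem (as used in Theorem~\ref{thm:rigid:orientation:generalized}) produces an orientation $H_2$ of $H_2'$ with prescribed in-degrees $d^-_{H_2}(v) = \ell(v) - r_2(v)$; the $\ell$-sparseness $e_{H_2}(A) \le \sum_{v \in A} \ell(v) - \ell(A)$ then automatically yields $d^-_{H_2}(A) \ge \ell(A) - \sum_{v \in A} r_2(v)$ for every set $A$, i.e., $r_2$-rooted $\ell$-arc-connectivity.

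Finally, I would orient the residual $H'$ so that $d^+_G(v) \le \lceil d_G(v)/2 \rceil$ at every vertex. Using the residual degree bound $d_{H'}(v) \ge \lfloor d_G(v)/2 \rfloor - l(v) - \ell(v)$ coming from the first step, together with the contributions $d^-_{H_1}(v) + d^-_{H_2}(v) = l(v) + \ell(v) - r_1(v) - r_2(v)$ computed above, Hakimi's prescribed-in-degree theorem applied to $H'$ supplies an orientation that realizes the desired smoothness bound. The sharper conclusion $d^+_G(u) \le \lfloor d_G(u)/2 \rfloor$ at a designated vertex $u$ comes from directing one additional edge into $u$ (using the parity slack available when $d_G(u)$ is odd, or the refinement of Corollary~\ref{cor:partition-connected-rigid:degrees} that saves one unit of slack at $u$). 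The hardest step is this last orientation of $H'$: at vertices where $r_1(v) + r_2(v) > 0$ the rooted orientations of $H_1$ and $H_2$ push the out-degree above $\lceil d_G(v)/2 \rceil - l(v) - \ell(v)$ by exactly $r_1(v) + r_2(v)$, so one must choose the auxiliary degree-controlling function in the decomposition step with enough slack at each rooting vertex for the Hakimi step to route the needed in-edges through $H'$.
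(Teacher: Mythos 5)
Your architecture is the same as the paper's (a three-way decomposition into a residual spanning subgraph, an $l$-partition-connected part and an $\ell$-rigid part, each of which is then oriented and whose in-degrees are summed), but the point where the two arguments diverge is exactly the point you flag as ``the hardest step,'' and there your proposal has a genuine gap. If you decompose with respect to $l$ and $\ell$ themselves (via Corollary~\ref{cor:partition-connected-rigid:degrees}), the residual $H'$ is only guaranteed to be $l'$-partition-connected for the singleton-supported function $l'(v)=\lfloor d_G(v)/2\rfloor-l(v)-\ell(v)$, hence orientable with $d^-_{H'}(v)\ge l'(v)$; adding the exact in-degrees $l(v)-r_1(v)$ and $\ell(v)-r_2(v)$ of $H_1$ and $H_2$ gives only $d^+_G(v)\le\lceil d_G(v)/2\rceil+r_1(v)+r_2(v)$. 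To close the deficit you would need $d^-_{H'}(v)\ge l'(v)+r_1(v)+r_2(v)$, and neither the degree lower bound $d_{H'}(v)\ge\lfloor d_G(v)/2\rfloor-l(v)-\ell(v)$ nor the $l'$-partition-connectivity of $H'$ delivers this: Hakimi's criterion for prescribing in-degrees $g$ is the subset condition $e_{H'}(A)\le\sum_{v\in A}g(v)$ for every $A$ (equivalently, $(g$ restricted to singletons$)$-partition-connectivity), which is strictly stronger than what the decomposition gives you once $g$ is increased by $r_1+r_2$. You correctly observe that ``one must choose the auxiliary degree-controlling function with enough slack at each rooting vertex,'' but that is precisely the missing argument, not a routine adjustment.

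The paper resolves this by shifting the set functions \emph{before} decomposing: it applies Theorem~\ref{thm:partition-connected-rigid} and Theorem~\ref{thm:main:partition-connected} to $l-r_1$ and $\ell-r_2$ (where $r_i(A)=\sum_{v\in A}r_i(v)$), together with the residual singleton function $l_0(v)=\lfloor d_G(v)/2\rfloor-(l(v)-r_1(v))-(\ell(v)-r_2(v))$. Since the $r_i$ are modular, $(l-r_1)$ and $(\ell-r_2)$ retain the supermodularity/subadditivity properties, the sparseness threshold $e_G(X)>\sum_{v\in X}\ell(v)-\ell(X)$ is literally unchanged, the required lower bounds on $d_{G-B}(A)$ only weaken, and nonnegativity is supplied by $r_2\le\ell$ and the nonincreasing property of $l$. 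The decomposition then already reserves the extra $r_1(v)+r_2(v)$ edges at each vertex for the residual part, Frank's theorem (Theorem~\ref{thm:Frank}) and Theorem~\ref{thm:rigid:orientation:generalized} orient the three pieces, and the deficit never arises. Your treatment of $H_1$ and $H_2$ individually (the shifted function $\hat l$, and Hakimi on the minimally $\ell$-rigid part) is consistent with the paper's; it is only the interaction with the residual that needs to be reworked along these lines.
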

\begin{proof}
{First assume that $r_1=r_2=0$.
 For each vertex $v$, define $l_0(v)=\lfloor d_G(v)/2  \rfloor -\ell(v)-l(v)$ and 
define $\ell_0(A)=0$ for every vertex set $A$ with $|A| \ge 2$.
By applying Theorems~\ref{thm:partition-connected-rigid} and~\ref{thm:main:partition-connected}, 
the graph $G$
 can be decomposed into a spanning $l_0$-partition-connected subgraph $H_0$,
a spanning $l$-partition-connected subgraph $H_1$,
 and a spanning minimally $\ell$-rigid subgraph $H_2$.
By Theorem~\ref{thm:rigid:orientation:generalized}, every $H_i$ has an $l_i$-arc-connected orientation, where $l_1=l$ and $l_2=\ell$.
Consider the  orientation of  $G$ obtained from these orientations.
For each vertex $v$, we must have 
$d^+_{G}(v)\le d_{G}(v) -\sum_{0\le i\le 2}d^-_{H_i}(v)  \le \lceil \frac{d_G(v)}{2}\rceil$.
In order to  prove general case, one can apply the  same arguments by  replacing the set functions $l-r_1$ and $\ell-r_2$, 
where $r_i(A)=\sum_{v\in V(G)}r_i(v)$ for every vertex set $A$.
Note that for reducing the upper bound for the vertex $u$, 
the proof can be obtained by repeating the proof of Theorem~\ref{thm:partition-connected-rigid} with  minor modifications.
}\end{proof}
\begin{cor}
{Let $G$ be a graph and let $\ell$ be a $2$-intersecting supermodular  subadditive nonnegative   integer-valued function on subsets of $V(G)$ and  $r$ be a nonnegative integer-valued function on $V(G)$ with $r\le \ell$ and 
  $\ell(G)=\sum_{v\in V(G)}r(v)$.
If $G$ is $\ell$-weakly $2\ell$-connected, then it  has  an orientation along with a
 spanning $r$-rooted $\ell$-arc-connected  subdigraph $H$ such that for each vertex $v$, 
$d^-_H(v)=\ell(v)-r(v)$ and 
$$d^+_{G}(v) \le \lceil\frac{d_G(v)}{2} \rceil.$$
Furthermore, for a given arbitrary  vertex $u$ the upper bound can be reduced to  $ \lfloor  \frac{d_G(u)}{2}\rfloor$.
}\end{cor}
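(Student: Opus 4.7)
The plan is to deduce this corollary as a direct specialization of the preceding theorem, obtained by taking the auxiliary set function $l$ and the rooting function $r_1$ both to be identically zero, while keeping $r_2 = r$ and the given $\ell$.

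First I would verify the structural hypotheses of the preceding theorem for $l\equiv 0$. The zero function is trivially nonincreasing, intersecting supermodular, nonnegative and integer-valued, and $l(G)=0=\sum_{v\in V(G)}r_1(v)$, so the setup is consistent. The assumption $r\le \ell$ and $\ell(G)=\sum_v r(v)$ from the corollary is exactly the hypothesis on $r_2$ required by the theorem.

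Second, I would check the vertex-degree condition $d_G(v)\ge 2\ell(v)+2l(v)=2\ell(v)$. This follows from $\ell$-weakly $2\ell$-connectedness by specialising its defining inequality to $A=\{v\}$, $B=\emptyset$, which gives $d_G(v)=d_{G-\emptyset}(\{v\})\ge 2\ell(\{v\})-0=2\ell(v)$ whenever $|V(G)|\ge 2$; the case $|V(G)|=1$ is trivial.

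Third, I would check the global connectivity hypothesis. With $l\equiv 0$ the two branches of the conditional bound coalesce, and the required inequality becomes
$$d_{G-B}(A)\ge 2\ell(A\cup B)-\sum_{v\in B}\ell(v)$$
for every disjoint pair $(A,B)$ with $A\cup B\subsetneq V(G)$ and $e_G(A\cup B)>\sum_{v\in A\cup B}\ell(v)-\ell(A\cup B)$. When $A\neq\emptyset$ this is precisely the definition of $\ell$-weakly $2\ell$-connectedness, so there is nothing to do. When $A=\emptyset$ the inequality reduces to $\sum_{v\in B}\ell(v)\ge 2\ell(B)$ under the edge-excess assumption, which I would justify via subadditivity of $\ell$ combined with the weak connectivity applied to an auxiliary pair; this is the place that needs genuine attention and is the main technical obstacle of the whole argument.

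Finally, I would invoke the preceding theorem to obtain an orientation of $G$ together with edge-disjoint spanning subdigraphs $H_1$ and $H_2$. Because $l\equiv 0$ and $r_1\equiv 0$, the prescribed in-degree $d^-_{H_1}(v)=l(v)-r_1(v)=0$ forces $H_1$ to contain no arcs, so it can be discarded; the subdigraph $H_2$ then plays the role of the $H$ claimed in the corollary, being $r$-rooted $\ell$-arc-connected with $d^-_{H}(v)=\ell(v)-r(v)$. The out-degree bound $d^+_G(v)\le \lceil d_G(v)/2\rceil$ and the refinement to $\lfloor d_G(u)/2\rfloor$ at a preselected vertex $u$ carry over verbatim from the corresponding clauses of the preceding theorem. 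Apart from the $A=\emptyset$ case flagged above, the proof is a clean specialization.
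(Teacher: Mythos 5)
Your route is the intended one: the paper states this corollary with no proof at all, and the only plausible derivation is exactly your specialization of the preceding theorem with $l\equiv 0$, $r_1\equiv 0$, $r_2=r$. Your verification of the degree condition and of the connectivity inequality in the branch $A\neq\emptyset$ is correct as described, and discarding the arcless $H_1$ is fine.

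The step you flag, however, is not a loose end that can be deferred --- it is a genuine gap, and it cannot be closed from the stated hypotheses. With $l\equiv 0$ and $A=\emptyset$ the preceding theorem demands $0=d_{G-B}(\emptyset)\ge 2\ell(B)-\sum_{v\in B}\ell(v)$, i.e.\ $\sum_{v\in B}\ell(v)\ge 2\ell(B)$, for every $B\subsetneq V(G)$ with $e_G(B)>\sum_{v\in B}\ell(v)-\ell(B)$. Subadditivity only yields $\sum_{v\in B}\ell(v)\ge\ell(B)$, and $\ell$-weak $2\ell$-connectivity is by definition silent when $A=\emptyset$. Worse, the needed inequality can actually fail under the corollary's hypotheses: take $\ell=\ell_{k,2k-1}$ with $k\ge 2$ and let $B$ be a pair of vertices joined by two parallel edges (the corollary does not assume $G$ simple); then $e_G(B)=2>1=\sum_{v\in B}\ell(v)-\ell(B)$ while $\sum_{v\in B}\ell(v)=2k<4k-2=2\ell(B)$, and such a $B$ can sit inside a graph that is $\ell$-weakly $2\ell$-connected. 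Since the inequality $\sum_{v\in X}\ell(v)\ge 2\ell(X)$ is used essentially in the Claim inside the proof of Theorem~\ref{thm:partition-connected-rigid}, no derivation of the $A=\emptyset$ case can exist in this generality; to complete the argument you must either add that inequality as a hypothesis, restrict to a setting (e.g.\ simple graphs with $\ell=\ell_{k,2k-1}$, where every edge-excess set has at least $2k$ vertices) in which it holds automatically, or rework that Claim. This defect is inherited from the paper itself, but as submitted your proof is incomplete at exactly the point you identify.
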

%
%
%
%
%
%
%
%
%
%
%
%
%
\section{Spanning rigid  subgraphs with small degrees on independent sets}
In this section, we turn our attention to present the following  strengthened version of Theorem~\ref{thm:partition-connected-rigid} 
by restricting degrees. Note that this theorem can be refined to a more  complicated version similar to Theorem~\ref{thm:epsilon:partition-connected-rigid}.
\begin{thm}\label{thm:partition-connected-rigid:independent}
{Let $G$ be a graph, let $l$ be a nonincreasing  intersecting supermodular  nonnegative   integer-valued function on subsets of $V(G)$, and let $\ell$ be a $2$-intersecting supermodular  subadditive nonnegative   integer-valued function on subsets of $V(G)$.
Let $k$ be a real number  with $k> 2$ and let $\rho$ be a nonnegative  real function on $V(G)$ with  $\rho \le d_G$.
 If the following conditions hold:
\begin{enumerate}{
\item For every $S\subseteq V(G)$,   $e_G(S) \le \sum_{v\in S} \rho(v)+\frac{k}{k-2}(l(G)+\ell(G))$.
\item For each vertex $v$, $d_G(v)\ge k\ell(v)+kl(v)$ and  for any two disjoint vertex  sets $A$ and $B$ with $A\cup B\subsetneq V(G)$
 and  $e_G(A\cup B)>\sum_{v\in A\cup B} \ell(v)-\ell(A\cup B)$,
$$d_{G-B}(A) \ge
k\ell(A\cup B)-\frac{1}{2}\sum_{v\in B} k\ell(v)+
 \begin{cases}
0,	&\text{when $A=\emptyset$};\\
kl(A\cup B),	&\text{when $A\neq \emptyset$}.
\end {cases}$$
}\end{enumerate}
then $G$ has a spanning subgraph $H$ containing a packing of  a spanning $l$-partition-connected  subgraph  and a spanning $\ell$-rigid  subgraph  such that for each vertex $v$,
$$d_H(v)\le \big\lceil \frac{d_G(v)  -2 \rho(v)}{k}\big\rceil+\rho(v)+l(v)+\ell(v).$$
}\end{thm}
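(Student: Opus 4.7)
My plan is to emulate Corollary~\ref{cor:partition-connected-rigid:degrees}: introduce an auxiliary intersecting supermodular, subadditive, nonnegative integer-valued set function $l_0$ supported on singletons whose partition-connectedness carries the ``extra'' edges of $G$ into a third spanning subgraph, leaving the packing $H$ with the required degree bound. Concretely I would set
\[
l_0(v) = d_G(v) - \lceil (d_G(v)-2\rho(v))/k \rceil - \rho(v) - l(v) - \ell(v)
\]
on singletons and $l_0(A)=0$ for $|A|\ge 2$; the assumption $d_G(v) \ge k(\ell(v)+l(v))$ together with $k>2$ ensures $l_0(v) \ge 0$, and the chosen extension makes $l_0$ intersecting supermodular and subadditive.

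Next I would decompose $G$ into a spanning $(l+l_0)$-partition-connected subgraph and a spanning $\ell$-rigid subgraph by a version of Theorem~\ref{thm:partition-connected-rigid}, and then split the first piece via Theorem~\ref{thm:main:partition-connected} into an $l$-partition-connected subgraph $H_1$ and an $l_0$-partition-connected subgraph $H_0$. Setting $H=H_1\cup H_2$ (with $H_2$ the $\ell$-rigid piece), the standard bound $d_{H_0}(v)\ge l_0(v)$ yields
\[
d_H(v) = d_G(v) - d_{H_0}(v) \le \lceil (d_G(v)-2\rho(v))/k\rceil + \rho(v) + l(v) + \ell(v),
\]
as required.

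The main obstacle is that with $k>2$ the inflated function $l+l_0$ violates the singleton condition $d_G(v) \ge 2\ell(v)+2(l+l_0)(v)$ of Theorem~\ref{thm:partition-connected-rigid}, so one cannot apply that theorem as a black box. I would instead rerun its proof, choosing the extremal edge-disjoint pair $(F,\mathcal{F})$ of spanning subgraphs of $G$ with $F$ being $l$-sparse and $\mathcal{F}$ being $\ell$-sparse, subject to the additional degree constraint $d_{F\cup\mathcal{F}}(v)\le \lceil (d_G(v)-2\rho(v))/k\rceil+\rho(v)+l(v)+\ell(v)$ at every vertex; such a pair exists trivially (take $F=\mathcal{F}=\emptyset$). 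If the maximum value of $|E(F)|+|E(\mathcal{F})|$ were strictly below $\sum_v(l(v)+\ell(v))-l(G)-\ell(G)$, I would apply Theorem~\ref{thm:generalized:D} to produce the partition $P$ and the collections $\mathcal{A},\mathcal{A}_0,\mathcal{P}$ as in the proof of Theorem~\ref{thm:partition-connected-rigid}. The enhanced $k$-factor in condition 2 gives a strengthened lower bound on $\sum_X d_{G-X_B}(X_A)$, while condition 1, applied to each vertex set $X\in\mathcal{P}$, certifies that the edges forbidden from $F\cup\mathcal{F}$ by the degree cap contribute at most $\sum_{v\in X}\rho(v)+\frac{k}{k-2}(l(G)+\ell(G))$. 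Careful bookkeeping, precisely analogous to the inequality chain leading to the final bound in the proof of Theorem~\ref{thm:partition-connected-rigid} but with the $k/(k-2)$ slack in condition 1 exactly matched against the $(k-2)/2$ surplus in condition 2, then yields $|E(F)|+|E(\mathcal{F})|\ge \sum_v(l(v)+\ell(v))-l(G)-\ell(G)$, a contradiction. Hence $F$ is $l$-partition-connected and $\mathcal{F}$ is $\ell$-rigid, and $H=F\cup\mathcal{F}$ is the sought packing.
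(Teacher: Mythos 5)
Your reduction is the right one and is the same as the paper's: the paper works with $l'(v)=\lfloor\frac{k-1}{k}d_G(v)-\frac{k-2}{k}\rho(v)\rfloor-\ell(v)$ on singletons and $l'(A)=l(A)$ for $|A|\ge2$, which is exactly your $l+l_0$, and it recovers the degree bound from $d_{H_0}(v)\ge l_0(v)$ as in Corollary~\ref{cor:partition-connected-rigid:degrees}. You also correctly spot that Theorem~\ref{thm:partition-connected-rigid} cannot be invoked as a black box for $l+l_0$. The gap is in your fix. At that point you abandon the inflated function and instead take $(F,\mathcal{F})$ extremal among $l$-sparse/$\ell$-sparse pairs \emph{subject to a degree cap} on $F\cup\mathcal{F}$. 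But Theorem~\ref{thm:generalized:D} (via Propositions~\ref{prop:xGy-exchange} and~\ref{prop:Q1Q2e}) needs unconstrained maximality: its proof repeatedly adds an edge $e\notin E(F\cup\mathcal{F})$ to one of the two subgraphs, or performs an exchange $\mathcal{F}_i-e'+e$, and both moves can violate your cap. A cap-constrained extremal pair therefore need not admit the partition $P$ with properties (1)--(3), and your inequality chain has no starting point. Moreover, the ``careful bookkeeping'' you defer is precisely where the work lies, and your description of it is off: condition 1 is not applied to each $X\in\mathcal{P}$ to control ``edges forbidden by the degree cap'' (there are no such edges in the paper's argument); it is applied once, to the set $S$ of vertices that form singleton classes of $\mathcal{P}$.

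What the paper actually does is keep $F$ $l'$-sparse (inflated function, no degree cap) in the extremal choice, so Theorem~\ref{thm:generalized:D} applies verbatim with $l'$ in place of $l$. The failure of the hypothesis $d_G(v)\ge 2\ell(v)+2l'(v)$ is then absorbed in the counting step: with $S=\{v:\{v\}\in\mathcal{P}\}$ and $\mathcal{P}'=\mathcal{P}\setminus\{\{v\}:v\in S\}$ one has $e_G(\mathcal{P})=\sum_{v\in S}d_G(v)-e_G(S)+e_{G\setminus S}(\mathcal{P}')$, which rearranges to $e_G(\mathcal{P})\ge \frac{1}{k}\sum_{X\in\mathcal{P}'}d_{G-X_B}(X_A)+\sum_{v\in S}\frac{k-1}{k}d_G(v)-\frac{k-2}{k}e_G(S)$; condition 1 applied to $S$ converts the last term into $-\frac{k-2}{k}\sum_{v\in S}\rho(v)-l(G)-\ell(G)$, and $\frac{k-1}{k}d_G(v)-\frac{k-2}{k}\rho(v)\ge l'(v)+\ell(v)$ supplies exactly the contribution each singleton must make to the final count. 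To repair your write-up, replace the degree-capped extremal pair by the $l'$-sparse/$\ell$-sparse extremal pair and carry out this singleton accounting.
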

\begin{proof}
{We repeat the proof of  Theorem~\ref{thm:partition-connected-rigid} with some  modifications. 
By an argument  similar to the proof of Corollary~\ref{cor:partition-connected-rigid:degrees},  it is enough to show that $G$ has a packing of a spanning $l^\prime$-partition-connected  subgraph  and a spanning $\ell$-rigid  subgraph, where
 $l^\prime(v)= l(v)+\lfloor \frac{k-1}{k}d_G(v) -\frac{k-2}{k}\rho(v)\rfloor-l(v)-\ell(v)$  
for each vertex $v$,
and  $l^\prime(A)=l(A)$ for every  vertex set $A$ with $|A|\ge2$.
Let $F$ and $\mathcal{F}$ be two edge-disjoint spanning subgraphs of $G$
 with the maximum $|E(F)|+|E( \mathcal{F})|$ such that $F$ is $l^\prime$-sparse and $\mathcal{F}$ is $\ell$-sparse.

Let $P$  be a partition of $V(G)$ with the properties described in Theorem~\ref{thm:generalized:D}.
Define  $\mathcal{A}$ to be the collection  of all vertex sets of the  maximal $\ell$-rigid subgraphs of $\mathcal{F}[A]$, 
where $A\in P$.
We may assume that $V(G)\notin \mathcal{A}$.
Let  $\mathcal{A}_0$  be the collection  of all vertex sets  $X$ in $\mathcal{A}$
with $e_G(X)=\sum_{v\in X}\ell(v)-\ell(X)$.
Define $\mathcal{P}$ be the collection of all vertex sets  in $\mathcal{A}\setminus \mathcal{A}_0$
along with the vertex sets $\{v\}$ with  $v\in V(G)\setminus \cup_{X\in \mathcal{A}\setminus \mathcal{A}_0}X$.
For every $X\in \mathcal{A}_0$, we have
$e_{G}(X) =e_\mathcal{F}(X) $, and so
 for every $xy\in E(F)$ with $x,y\in A\in P$,
there must be an $\ell$-rigid subgraph of $\mathcal{F}$  including $x$ and $y$  whose vertex set lie in $\mathcal{P}$.
Thus  items (2) and (3) of Theorem~\ref{thm:generalized:D} can imply that
\begin{equation}\label{eq:simplegraph-application:res}
 e_{G}(\mathcal{P})= 
e_{\mathcal{F}}(\mathcal{P}) + e_F(P).
\end{equation}
Take $S$ to be the set of all vertices $v$ such that $\{v\}\in \mathcal{P}$, and  put 
$\mathcal{P}^\prime=\mathcal{P}\setminus \{\{v\}:v\in S\}$.
For any $X\in \mathcal{P}$, 
define $X_B$ to be the set of all vertices $v$ which appears in at least two vertex sets of $\mathcal{P}$,
 and set $X_A=X\setminus X_B$.
It is not hard to check that
$$\sum_{v\in S}d_G(v)-e_G(S)+e_{G\setminus S}(\mathcal{P}^\prime) 
=e_{G}(\mathcal{P})\ge
 \sum_{X\in \mathcal{P}^\prime}d_{G- X_B}(X_A)-e_{G\setminus S}(\mathcal{P}^\prime)+e_G(S),$$
which implies that
$$\frac{2}{k}e_{G\setminus S}(\mathcal{P}^\prime) \ge 
 \frac{1}{k}\sum_{X\in \mathcal{P}^\prime}d_{G- X_B}(X_A)-\frac{1}{k}  \sum_{v\in S} d_G(v)+\frac{2}{k}e_G(S).$$
Thus
$$
e_{G}(\mathcal{P}) =\sum_{v\in S}d_G(v)-e_G(S)+e_{G\setminus S}(\mathcal{P}^\prime)\ge 
 \frac{1}{k}\sum_{X\in \mathcal{P}^\prime}d_{G- X_B}(X_A)+\sum_{v\in S}\frac{k-1}{k}d_G(v)-\frac{k-2}{k}e_G(S).
$$
Since $e_G(S)\le \sum_{v\in S} \rho(v)+\frac{k}{k-2}(l(G)+\ell(G))$, we must have 
\begin{equation}\label{eq:I:R:1}
e_{G}(\mathcal{P}) \ge 
 \frac{1}{k}\sum_{X\in \mathcal{P}^\prime}d_{G- X_B}(X_A)+
\sum_{v\in S}(\frac{k-1}{k}d_G(v)-\frac{k-2}{k}\rho(v))-l(G)-\ell(G).
\end{equation}
By the assumption,
$$\frac{1}{k}\sum_{X\in \mathcal{P}^\prime}d_{G- X_B}(X_A)
\ge   \sum_{X\in \mathcal{P}^\prime}\big(\ell(X)-\frac{1}{2}\sum_{v\in X_B}\ell(v)\big)+
\sum_{ X\in \mathcal{P}^\prime, X_A\neq \emptyset}l(X),$$
which can  imply that
\begin{equation}\label{eq:I:R:2}
\frac{1}{k}\sum_{X\in \mathcal{P}^\prime}d_{G- X_B}(X_A))
\ge \sum_{X\in  \mathcal{P}}\ell(X)
-\frac{1}{2}\sum_{X\in \mathcal{P}}\sum_{v\in X_B} \ell(v)-\sum_{v\in S}\ell(v)+
\sum_{ X\in \mathcal{P}^\prime, X_A\neq \emptyset}l(X).
\end{equation}
Hence Relations~(\ref{eq:I:R:1}) and~(\ref{eq:I:R:2}) can deduce that
$$e_{G}(\mathcal{P})\ge 
  \sum_{X\in  \mathcal{P}}\ell(X)-\frac{1}{2}\sum_{X\in \mathcal{P}}\sum_{v\in X_B} \ell(v)
+\sum_{v \in S}l^\prime (v)+\sum_{ X\in \mathcal{P}^\prime, X_A\neq \emptyset}l^\prime(X)-l^\prime(G)-\ell(G).$$
Similar to the proof of Theorem~\ref{thm:partition-connected-rigid},   one can prove that that for any $Q\in P$, 
there is a vertex set $X$ in $\mathcal{P}$  with $X_A\neq \emptyset$.
Since $l^\prime$ is nonincreasing and nonnegative, we must have
$$\sum_{v \in S}l^\prime(v)+\sum_{X\in \mathcal{P}^\prime, X_A\neq \emptyset}l^\prime(X)=\sum_{X\in \mathcal{P}, X_A\neq \emptyset}l^\prime(X)
\ge \sum_{Q\in P}l^\prime(Q),$$
which can  imply that
\begin{equation}\label{eq:I:R:3}
e_{G}(\mathcal{P})\ge 
  \sum_{X\in  \mathcal{P}}\ell(X)-\frac{1}{2}\sum_{X\in \mathcal{P}}\sum_{v\in X_B} \ell(v)+
 \sum_{Q\in P}l^\prime(Q)-l^\prime(G)-\ell(G).
\end{equation}
On the other hand,
\begin{equation}\label{eq:I:R:4}
|E(\mathcal{F})|=
e_{\mathcal{F}}(P) +
\sum_{X\in \mathcal{P}}e_{\mathcal{F}}(X) \ge 
e_G(\mathcal{P})-e_{F }(P)+
\sum_{X\in \mathcal{P}}(\sum_{v\in X}\ell(v)-\ell(X)).
\end{equation}
Also, 
\begin{equation}\label{eq:I:R:5}
|E(F)|=e_{F}(P)+
\sum_{Q\in P}e_{F}(Q)= 
e_{F}(P)+
\sum_{Q\in P}(\sum_{v\in Q} l^\prime(v)-l^\prime(Q))=e_{F}(P)+\sum_{v\in V(G)}l^\prime(v)-\sum_{v\in Q}l^\prime(Q).
\end{equation}
Therefore, Relations~(\ref{eq:I:R:3}), ~(\ref{eq:I:R:4}), and~(\ref{eq:I:R:5}) can conclude that
$$|E(F)|+|E(\mathcal{F})|
\ge   \sum_{v\in V(G)} (l^\prime(v)+\ell(v))-l^\prime(G)+\ell(G).$$
Thus we must have $|E(F)|=\sum_{v\in V(G)}l^\prime(v)-l^\prime(G)$ and  $|E(\mathcal{F})|=\sum_{v\in V(G)}\ell(v)-\ell(G)$. 
Hence  $F$ is $l^\prime$-partition-connected and $\mathcal{F}$ is $\ell$-rigid and the proof is completed.
}\end{proof}
\begin{cor}
{Let $G$ be a bipartite graph with one partite set $A$ and let $k$ be a real number with $k\ge 1$.
If $G$ is $6k$-connected, then it has a spanning $2$-rigid subgraph $H$ such that for each $v\in A$,
$$d_H(v)\le \lceil \frac{d_G(v)}{k}\rceil +2.$$
}\end{cor}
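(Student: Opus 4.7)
The plan is to derive this corollary as a direct specialization of Theorem~\ref{thm:partition-connected-rigid:independent}. I will apply that theorem with $\ell = \ell_{2,3}$ (so that $\ell$-rigid coincides with $2$-rigid), with $l \equiv 0$ (making the partition-connectedness half of the packing vacuous), with the theorem's parameter equal to the corollary's $k$, and with $\rho$ defined by $\rho(v) = 0$ for $v \in A$ and $\rho(v) = d_G(v)$ for $v \in V(G) \setminus A$. Under these choices the theorem's degree conclusion specializes, for each $v \in A$, to
$$d_H(v) \le \left\lceil \frac{d_G(v) - 2\rho(v)}{k} \right\rceil + \rho(v) + l(v) + \ell(v) = \left\lceil \frac{d_G(v)}{k} \right\rceil + 2,$$
which is exactly the bound asserted in the corollary.

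First I would check hypothesis~(1). Bipartiteness ensures that every edge of $G[S]$ has one end in $S \setminus A$, so
$$e_G(S) \le \sum_{v \in S \setminus A} d_G(v) = \sum_{v \in S} \rho(v),$$
and the residual term $\tfrac{k}{k-2}(l(G)+\ell(G))$ only adds further slack.

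Next I would verify hypothesis~(2). The pointwise bound $d_G(v) \ge 2k = k\ell(v)+kl(v)$ is immediate from $6k$-connectivity. The cut inequality (with the $l$-term vanishing) demands $d_{G-B}(A') \ge 3k - k|B|$ for all disjoint $A', B$ with $A' \cup B \subsetneq V(G)$ and $e_G(A' \cup B) > 2|A' \cup B|-3$; note that the cases $|A' \cup B|\le 1$ never trigger the hypothesis, since $G$ has no loops. I would split on $|B|$. If $A' \neq \emptyset$ and $|B| \le 3$, then $|B| < 6k$ and $6k$-connectivity gives $d_{G-B}(A') \ge 6k-|B| \ge 3k-k|B|$, the last inequality reducing to $3k+(k-1)|B|\ge 0$, which holds for $k \ge 1$; if $|B| \ge 4$, the right-hand side $3k-k|B|$ is nonpositive and the inequality is automatic. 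If $A' = \emptyset$, then simplicity of $G$ together with bipartiteness forces $e_G(B) \le 1$ whenever $|B| \le 2$, so the triggering condition requires $|B| \ge 3$, after which $3k - k|B| \le 0 = d_{G-B}(\emptyset)$.

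With both hypotheses established, Theorem~\ref{thm:partition-connected-rigid:independent} produces a spanning subgraph whose $2$-rigid component is the desired $H$. The main technical burden lies in the cut-inequality case analysis, and in particular the $A' = \emptyset$ branch, where bipartiteness together with absence of parallel edges is essential to eliminate the small-$|B|$ cases that would otherwise violate the bound.
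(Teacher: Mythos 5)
Your argument is essentially identical to the paper's: the paper also sets $\rho(v)=0$ on $A$ and $\rho(v)=d_G(v)$ off $A$ and invokes Theorem~\ref{thm:partition-connected-rigid:independent} with $\ell=\ell_{2,3}$ (the partition-connected part being vacuous); what you add is the hypothesis verification that the paper leaves implicit, and your case analysis of condition (2) is sound under the paper's reading of ``$6k$-connected'' as $1$-weakly $6k$-connected. Two caveats, both of which you should flag rather than assert away. First, Theorem~\ref{thm:partition-connected-rigid:independent} requires $k>2$, so your claim that the term $\frac{k}{k-2}\bigl(l(G)+\ell(G)\bigr)$ ``only adds further slack'' is false for $1\le k<2$ (there $\frac{k}{k-2}<0$, and with $\ell(G)=3$ condition (1) fails already at $S=V(G)$) and is meaningless at $k=2$; hence the theorem cannot be applied with parameter $k$ over the full range $k\ge 1$ claimed by the corollary. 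This defect is inherited from the paper, whose own one-line proof silently does the same thing, so it is not specific to your write-up, but it is a genuine gap in the stated generality. Second, your $A'=\emptyset$, $|B|=2$ case genuinely needs $G$ to be simple (bipartiteness alone does not rule out a pair of parallel edges giving $e_G(B)=2>2|B|-3$), and the corollary as stated does not assume simplicity, so you are quietly adding a hypothesis there.
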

\begin{proof}
{For each $v\in A$, define $\rho(v)=0$, and for each $v\in V(G)\setminus A$, define $\rho(v)=d_G(v)$. 
Now, it is enough to  apply
 Theorem~\ref{thm:partition-connected-rigid:independent} with $\ell=\ell_{2,3}$ and use the fact that every $2$-rigid graph is $2$-connected.
}\end{proof}
%
%
%
%
%
%
%
%
%
%
%
%
%
\section{Hypergraph versions}
 Let $\mathcal{H}$ be a hypergraph (possibly with repetition of hyperedges).
The vertex set and  the hyperedge set  of $\mathcal{H}$ are denoted by $V(\mathcal{H})$ and $E(\mathcal{H})$, respectively. 
The (co-rank) rank  of  $\mathcal{H}$  is the (minimum) maximum   size of its hyperedges.
The degree $d_\mathcal{H}(v)$ of a vertex $v$ is the number of hyperedges of $\mathcal{H}$ including  $v$.
For a set $X\subseteq V(\mathcal{H})$, 
we denote by $\mathcal{H}[X]$ the induced  sub-hypergraph of $\mathcal{H}$  with the vertex set $X$  containing
precisely those hyperedges  $Z$
of $\mathcal{H}$ with $Z\subseteq X$.
A spanning sub-hypergraph $F$ is called {\bf $l$-sparse}, if for all vertex sets $A$, $e_F(A)\le \sum_{v\in A} l(v)-l(A)$,
where $e_F(A)$ denotes the number of hyperedges $Z$ of $F$  with $Z\subseteq A$.
Likewise,  the hypergraph $\mathcal{H}$ is called {\bf $l$-partition-connected}, 
if for every partition $P$ of $V(\mathcal{H})$, 
$e_\mathcal{H}(P)\ge \sum_{A\in P}l(A)-l(\mathcal{H})$, 
where $e_\mathcal{H}(P)$ denotes the number of hyperedges of $\mathcal{H}$ joining different parts of $P$.
We say that a hypergraph $\mathcal{H}$ is {\bf $l$-rigid}, if 
  it contains a spanning $l$-sparse sub-hypergraph $F$ with $|E(F)|=\sum_{v\in V(F)}l(v)-l(F)$.
We call a hypergraph $\mathcal{H}$ directed, if for every hyperedge $Z$, a head vertex $u$ in $Z$ is specified; 
 other vertices of $Z-u$ are called the tails of $Z$.
For a vertex $v$, we denoted by $d^-_{\mathcal{H}}(v)$ the number of hyperedges with head  $v$ and denote by $d^+_{\mathcal{H}}(v)$ and the number of hyperedges with tail $v$. 
We say that a directed hypergraph   $\mathcal{H}$ is {\bf $l$-arc-connected}, if for every vertex set $A$, 
$d^-_{\mathcal{H}}(A)\ge l(A)$,
where $d^-_{\mathcal{H}}(A)$ denotes the number of hyperedges  $Z$  with head vertex in $A$ and 
$Z\setminus A\neq \emptyset $.
Likewise, $\mathcal{H}$ is called {\bf $r$-rooted $l$-arc-connected},
 if for every vertex set $A$, $d_\mathcal{H}^-(A)\ge l(A)-\sum_{v\in A}r(v)$, where
 $r$ is a nonnegative integer-valued on $V(\mathcal{H})$ with $l(\mathcal{H})=\sum_{v\in V(\mathcal{H})}r(v)$.
We denote by $d_{\mathcal{H}\ominus B}(A)$ the number of hyperedges $Z$ 
with $Z\cap A\neq \emptyset$ and $Z\setminus (A\cup B)\neq\emptyset$, where $A$ and $B$ are two disjoint vertex sets.
\subsection{A necessary and sufficient orientation condition for a hypergraph to be  $\ell$-rigid}
The following theorem is a hypergraph version of Theorem~\ref{thm:Frank} which was proved by  Frank, Kir\'aly, and Kir\'aly  (2003).
\begin{thm}{\rm (\cite{MR2021108})}
{Let $\mathcal{H}$ be a   hypergraph and let $l$ 
be an intersecting  supermodular nonnegative integer-valued   function on  subsets of $V(\mathcal{H})$
 with $l(\emptyset)=l(\mathcal{H})=0$.
Then $\mathcal{H}$ is $l$-partition-connected if and only if it has an $l$-arc-connected orientation.
}\end{thm}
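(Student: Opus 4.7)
The necessity (``only if'') direction is a direct counting argument: given an $l$-arc-connected orientation of $\mathcal{H}$ and any partition $P=\{V_1,\dots,V_t\}$ of $V(\mathcal{H})$, each hyperedge that meets more than one part has its head in a unique $V_i$ and at least one vertex outside $V_i$, so it contributes $1$ to $d^-_{\mathcal{H}}(V_i)$. Summing gives $e_{\mathcal{H}}(P)=\sum_{i}d^-_{\mathcal{H}}(V_i)\ge \sum_i l(V_i)=\sum_{A\in P}l(A)-l(\mathcal{H})$, since $l(\mathcal{H})=0$.

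For the sufficiency direction, the plan is to reduce to Frank's graph theorem (Theorem~\ref{thm:Frank}) via a star-gadget construction. For each hyperedge $Z\in E(\mathcal{H})$ I would introduce a fresh auxiliary vertex $z_Z$ and form the bipartite graph $G$ on $V(G)=V(\mathcal{H})\cup\{z_Z:Z\in E(\mathcal{H})\}$ whose edges are the pairs $z_Zv$ with $v\in Z$. Any orientation of $G$ in which each $z_Z$ has exactly one outgoing arc induces an orientation of $\mathcal{H}$: the unique out-neighbour of $z_Z$ becomes the head of $Z$. A short count shows that for $A\subseteq V(\mathcal{H})$,
$$d^-_G(A)=d^-_{\mathcal{H}}(A)+e_{\mathcal{H}}(A),$$
so the aim is to apply Theorem~\ref{thm:Frank} to $G$ with an extension $l'$ of $l$ that, on subsets of $V(\mathcal{H})$, equals $l(A)+e_{\mathcal{H}}(A)$ and, on subsets containing some $z_Z$'s, is calibrated both so that $l'(V(G))=0$ and so that each singleton $\{z_Z\}$ forces $|Z|-1$ incoming arcs (and hence exactly one outgoing arc at $z_Z$). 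The intersecting supermodularity of $l'$ restricted to $V(\mathcal{H})$ follows from that of $l$ combined with the supermodularity of the hyperedge-count function $e_{\mathcal{H}}(\cdot)$, which in turn comes from a routine case analysis of how a hyperedge $Z$ sits relative to two crossing sets $A$ and $B$.

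The main obstacle is defining $l'$ consistently on subsets of $V(G)$ that mix true vertices with auxiliary vertices $z_Z$ and then verifying intersecting supermodularity under uncrossing; this requires a case split according to how each hyperedge $Z$ meets the four regions $A\cap B$, $A\setminus B$, $B\setminus A$ and $V(G)\setminus(A\cup B)$, and also requires a boundary adjustment so that $l'(V(G))=0$ in spite of the $+e_{\mathcal{H}}$ term. Once $l'$ is in hand, the $l'$-partition-connectedness of $G$ should follow from the $l$-partition-connectedness of $\mathcal{H}$: given a partition of $V(G)$, push each $z_Z$ into the part of one of its neighbours, obtain a partition $P'$ of $V(\mathcal{H})$, and compare the two partition inequalities (star edges not crossing the pushed partition carry no deficit). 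Applying Theorem~\ref{thm:Frank} yields an $l'$-arc-connected orientation of $G$, and the in-degree requirement at each $\{z_Z\}$ forces the star to have exactly one outgoing arc, delivering the desired $l$-arc-connected orientation of $\mathcal{H}$. An alternative and technically cleaner route is to encode the required in-degree system as a submodular flow and invoke the Edmonds--Giles theorem, where feasibility reduces precisely to the partition inequality of the hypothesis.
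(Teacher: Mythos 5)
First, a point of reference: the paper does not prove this statement at all --- it is quoted from Frank, Kir\'aly and Kir\'aly \cite{MR2021108} --- so there is no in-paper argument to compare yours with, and your proposal must stand on its own. Your necessity direction does: every hyperedge crossing the partition is counted exactly once in $\sum_{A\in P} d^-_{\mathcal{H}}(A)$, namely in the part containing its head, while a hyperedge inside a part contributes to no term, so $e_{\mathcal{H}}(P)=\sum_{A\in P}d^-_{\mathcal{H}}(A)\ge\sum_{A\in P}l(A)=\sum_{A\in P}l(A)-l(\mathcal{H})$. That half is complete and correct.

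The sufficiency direction, however, has a genuine gap: the auxiliary function $l'$ on subsets of the incidence graph $G$ is never defined, and that definition is precisely where the difficulty of the theorem lives. Concretely: (i) setting $l'(\{z_Z\})=|Z|-1$ caps the out-degree of $z_Z$ at one, but nothing in an $l'$-arc-connected orientation prevents $d^+_G(z_Z)=0$, in which case $Z$ acquires no head; forcing $d^+_G(z_Z)\ge 1$ needs an extra demand (on sets avoiding $z_Z$, or via a separate arc-flipping argument), and that demand must be folded into $l'$ without breaking intersecting supermodularity. (ii) On subsets of $V(\mathcal{H})$ you want $l'=l+e_{\mathcal{H}}$, which forces $l'(V(\mathcal{H}))=|E(\mathcal{H})|>0$, while Theorem~\ref{thm:Frank} requires $l'(V(G))=0$; intersecting supermodularity for a pair such as $A=V(\mathcal{H})$ and a mixed set $B$ with $A\cup B=V(G)$ then imposes real constraints on the values at mixed sets, and no assignment meeting them is exhibited. (iii) Even granting $l'$, the $l'$-partition-connectivity of $G$ is not a one-line consequence of the hypothesis: a partition of $V(G)$ may isolate many vertices $z_Z$, each carrying a demand of $|Z|-1$, and the bookkeeping that converts the hypergraph partition inequality into the graph one must absorb both these demands and the $e_{\mathcal{H}}$ terms; your device of pushing each $z_Z$ into a neighbour's part only handles partitions in which no $z_Z$ is separated from all of $Z$. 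The reduction you outline is the standard route in the literature, so the plan is sound in spirit, but as written the argument stops exactly where it would have to do the work; the alternative Edmonds--Giles route is likewise only named, not carried out.
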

Motivated by the above-mentioned theorem, we   state the following    hypergraph version of 
Theorem~\ref{thm:rigid:orientation:generalized}.
\begin{thm}
{Let $\mathcal{H}$ be a hypergraph and let $\ell$ be a  weakly  subadditive nonnegative   integer-valued function on subsets of $V(\mathcal{H})$
with  $\ell(\emptyset)=\ell(\mathcal{H})=0$.
Then  $\mathcal{H}$ is minimally $\ell$-rigid if and only if it has an $\ell$-arc-connected orientation such that for each vertex $v$, $d^-_\mathcal{H}(v)=\ell(v)$.
}\end{thm}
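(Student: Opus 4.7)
The plan is to imitate the proof of Theorem~\ref{thm:rigid:orientation:generalized} verbatim, merely replacing the graph counting identity by its hypergraph counterpart and invoking a hypergraph analogue of Hakimi's in-degree theorem in place of the graph version. The key underlying identity is that for any vertex set $A$, every hyperedge with head in $A$ either lies entirely in $A$ (contributing to $e_{\mathcal{H}}(A)$) or has some tail outside $A$ (contributing to $d^-_{\mathcal{H}}(A)$), giving
$$\sum_{v\in A} d^-_{\mathcal{H}}(v)=e_{\mathcal{H}}(A)+d^-_{\mathcal{H}}(A).$$

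For the forward direction, suppose $\mathcal{H}$ admits an $\ell$-arc-connected orientation with $d^-_{\mathcal{H}}(v)=\ell(v)$ for every vertex $v$. Summing over $V(\mathcal{H})$ yields $|E(\mathcal{H})|=\sum_{v\in V(\mathcal{H})}\ell(v)$. Applying the identity above together with the $\ell$-arc-connectedness gives $e_{\mathcal{H}}(A)=\sum_{v\in A}\ell(v)-d^-_{\mathcal{H}}(A)\le\sum_{v\in A}\ell(v)-\ell(A)$ for every vertex set $A$, so $\mathcal{H}$ is $\ell$-sparse; combined with the size equality, $\mathcal{H}$ is minimally $\ell$-rigid.

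For the converse, suppose $\mathcal{H}$ is minimally $\ell$-rigid. Then $|E(\mathcal{H})|=\sum_{v\in V(\mathcal{H})}\ell(v)$, and since $\ell$ is nonnegative, $e_{\mathcal{H}}(A)\le\sum_{v\in A}\ell(v)-\ell(A)\le\sum_{v\in A}\ell(v)$ for every vertex set $A$. At this point I would invoke the hypergraph analogue of Hakimi's theorem: a hypergraph admits an orientation with prescribed in-degrees $f$ if and only if $\sum_{v}f(v)=|E(\mathcal{H})|$ and $e_{\mathcal{H}}(A)\le\sum_{v\in A}f(v)$ for all $A\subseteq V(\mathcal{H})$. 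Applying it with $f=\ell$ produces the required orientation. The counting identity then yields $d^-_{\mathcal{H}}(A)=\sum_{v\in A}\ell(v)-e_{\mathcal{H}}(A)\ge\ell(A)$, so the orientation is $\ell$-arc-connected, and equality happens precisely when $\mathcal{H}[A]$ is $\ell$-rigid.

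The main obstacle is establishing the hypergraph Hakimi-type lemma, which in the graph case is immediate by orienting each edge independently. For hypergraphs a single hyperedge contributes to only one in-degree, so the existence of the prescribed orientation must be obtained either by a matroid-partition argument (the transversal matroid on hyperedge--head incidences) or by a flow/Hall-type construction; such a statement is implicit in the hypergraph orientation literature (for instance in~\cite{MR2021108}), and once granted, the remainder of the argument is identical to that of Theorem~\ref{thm:rigid:orientation:generalized}.
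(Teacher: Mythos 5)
Your proposal is correct and follows essentially the same route as the paper: both directions rest on the identity $\sum_{v\in A}d^-_{\mathcal{H}}(v)=e_{\mathcal{H}}(A)+d^-_{\mathcal{H}}(A)$, and the converse invokes the prescribed-in-degree orientation lemma for hypergraphs, which the paper obtains by citing Lemma~3.3 of the same reference~\cite{MR2021108} you point to.
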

\begin{proof}
{First assume that $\mathcal{H}$ has an $\ell$-arc-connected orientation such that for each vertex $v$,
 $d^-_\mathcal{H}(v)=\ell(v)$.
Obviously, $|E(\mathcal{H})|=\sum_{v\in V(\mathcal{H})}d^-_\mathcal{H}(v)
=\sum_{v\in V(\mathcal{H})}\ell(v)$.
Furthermore, for every vertex set $A$, we have 
$$e_\mathcal{H}(A) =
\sum_{v\in A}d^-_\mathcal{H}(v)-d_\mathcal{H}^-(A)=
\sum_{v\in A}\ell(v)- d_\mathcal{H}^-(A)
\le  
\sum_{v\in A}\ell(v)- \ell(A).$$
Thus $\mathcal{H}$ is $\ell$-sparse and hence  minimally $\ell$-rigid.
Now, assume that $\mathcal{H}$ is minimally $\ell$-rigid.
Since $\mathcal{H}$ is $\ell$-sparse and $\ell$ is nonnegative, for every vertex set $A$,
 $e_\mathcal{H}(A)\le \sum_{v\in A}\ell(v)-\ell(A)\le \sum_{v\in A}\ell(v)$.
Since $|E(\mathcal{H})|=\sum_{v\in V(\mathcal{H})}\ell(v)$, the hypergraph  $\mathcal{H}$ must have an orientation such that for each vertex $v$, $d^-_\mathcal{H}(v)= \ell(v)$, see~\cite[Lemma 3.3]{MR2021108}.
Therefore, for every vertex set $A$, we  must have 
$$d_\mathcal{H}^-(A)=\sum_{v\in A}d^-_\mathcal{H}(v)-e_\mathcal{H}(A)=  \sum_{v\in A}\ell(v)-e_\mathcal{H}(A) \ge \ell(A).$$
Thus the orientation of $\mathcal{H}$ is $\ell$-arc-connected.
}\end{proof}
\subsection{Generalizations}
In this subsection, we only state the hypergraphs versions of the main results of this paper, which their proofs follow with the same arguments that  stated for whose graph versions.

\begin{proposition}
{Let $\mathcal{H}$ be a hypergraph and let $\ell$ be a weakly subadditive real function on subsets of $V(\mathcal{H})$.
If $\mathcal{H}$ is $\ell$-rigid, then for any two disjoint vertex  sets $A$ and $B$,
$$d_{\mathcal{H}\ominus B}(A)
 \ge \ell(A\cup B)-\sum_{v\in B} \ell(v)\, +(\ell(\mathcal{H}\setminus A)-\ell(\mathcal{H})).$$
}\end{proposition}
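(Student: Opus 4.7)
The plan is to mirror the graph-version argument of Proposition~3.1 line by line, replacing edges by hyperedges and recovering the analogous inclusion-exclusion identity in the hypergraph setting. First I would reduce to the case where $\mathcal{H}$ itself is minimally $\ell$-rigid by passing to a minimally $\ell$-rigid spanning sub-hypergraph $F$; since removing hyperedges can only decrease $d_{\mathcal{H}\ominus B}(A)$, any lower bound established for $F$ transfers back to $\mathcal{H}$. By the definition of minimal $\ell$-rigidity, $F$ is $\ell$-sparse with $|E(F)|=\sum_{v\in V(F)}\ell(v)-\ell(F)$, which immediately yields the two sparsity bounds $e_F(A\cup B)\le\sum_{v\in A\cup B}\ell(v)-\ell(A\cup B)$ and $e_F(A^c)\le\sum_{v\in A^c}\ell(v)-\ell(A^c)$, where $A^c=V(F)\setminus A$.

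The central step is to establish the hypergraph counting identity
$$|E(F)|=e_F(A\cup B)+e_F(A^c)+d_{F\ominus B}(A)-e_F(B).$$
I would prove this by partitioning the hyperedges $Z\in E(F)$ into three classes, namely (i) $Z\cap A=\emptyset$, so $Z\subseteq A^c$; (ii) $Z\cap A\neq\emptyset$ and $Z\subseteq A\cup B$; and (iii) $Z\cap A\neq\emptyset$ and $Z\setminus(A\cup B)\neq\emptyset$. Classes (ii) and (iii) are disjoint from each other and from (i); the only overlap between the tallies $e_F(A^c)$ and $e_F(A\cup B)$ comes from hyperedges contained in $B$, which is exactly the correction term $-e_F(B)$, while class (iii) is counted precisely by $d_{F\ominus B}(A)$. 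This is the direct analogue of the identity $d_{G-B}(A)=|E(G)|-e_G(A\cup B)-e_G(A^c)+e_G(B)$ used in the graph case.

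Combining this identity with the two sparsity bounds and the formula for $|E(F)|$, and then invoking the telescoping
$$\sum_{v\in V(F)}\ell(v)-\sum_{v\in A\cup B}\ell(v)-\sum_{v\in A^c}\ell(v)=-\sum_{v\in B}\ell(v),$$
which follows since $(A\cup B)\cup A^c=V(F)$ and $(A\cup B)\cap A^c=B$, produces
$$d_{F\ominus B}(A)\ge\ell(A\cup B)-\sum_{v\in B}\ell(v)+\bigl(\ell(A^c)-\ell(F)\bigr)+e_F(B).$$
Dropping the nonnegative term $e_F(B)$ and rewriting $A^c$ as $\mathcal{H}\setminus A$ gives the claimed inequality. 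The only mildly delicate point is the inclusion-exclusion in the hypergraph setting, since a single hyperedge can straddle $A$, $B$, and $V(\mathcal{H})\setminus(A\cup B)$ simultaneously; but the three-class decomposition above resolves this cleanly, so I do not anticipate a real obstacle beyond carefully verifying that bookkeeping.
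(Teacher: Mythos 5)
Your proposal is correct and follows essentially the same route as the paper: the paper explicitly states that the hypergraph versions are proved "with the same arguments" as the graph versions, and your three-class decomposition is exactly the right hypergraph analogue of the identity $d_{G-B}(A)=|E(G)|-e_G(A\cup B)-e_G(A^c)+e_G(B)$, with the overlap between $e_F(A^c)$ and $e_F(A\cup B)$ being precisely the hyperedges contained in $B$. The reduction to a minimally $\ell$-rigid spanning sub-hypergraph, the two sparsity bounds, the telescoping of the $\ell(v)$ sums, and the final dropping of $e_F(B)\ge 0$ all match the paper's Proposition on necessary connectedness conditions.
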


\begin{thm}
{Let $\mathcal{H}$ be a hypergraph with the co-rank $c$, $c\ge 2$, let $l$ be an  intersecting supermodular   subadditive   integer-valued function on subsets of $V(\mathcal{H})$, and let $\ell$ be a $c$-intersecting supermodular  subadditive   integer-valued function on subsets of $V(\mathcal{H})$.
 If $F$ and $\mathcal{F}$ are two edge-disjoint spanning sub-hypergraphs of $\mathcal{H}$ 
with the maximum $|E(F \cup \mathcal{F})|$
such that $F$ is $l$-sparse and $\mathcal{F}$ is $\ell$-sparse,
 then there is a partition $P$ of $V(\mathcal{H})$  with the following properties:
\begin{enumerate}{
\item For any $A\in P$, the hypergraph $F[A]$ is $l$-partition-connected.
\item There is no hyperedges in $E(\mathcal{H})\setminus E(F\cup \mathcal{F})$ joining different parts of $P$.
\item   For every $Z\in E(\mathcal{F})$ with $Z\subseteq  A\in P$,
 there is a vertex set $X$ such that  $Z \subseteq X\subseteq A$ and $\mathcal{F}[X]$ is $\ell$-rigid.
}\end{enumerate}
}\end{thm}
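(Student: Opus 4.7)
The plan is to mirror the proof of Theorem~\ref{thm:generalized:D} (the graph version, which appears earlier in the excerpt) with hypergraph analogues of Propositions~\ref{prop:xGy-exchange}, \ref{prop:Q1Q2e}, and \ref{prop:Q:subadditive}. All the sparsity and rigidity notions, along with the supermodularity hypotheses, are formulated in terms of set functions on vertex sets, so the machinery transports to hypergraphs with only cosmetic changes; the co-rank assumption $c\ge 2$ plays the role that $|\{x,y\}|=2$ played implicitly in the graph proof.

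First, set $\ell_1=l$, $\ell_2=\ell$, $F_1=F$, $F_2=\mathcal{F}$, and $T=(F_1,F_2)$. Let $\mathcal{A}$ be the collection of all edge-disjoint pairs $\mathcal{T}=(\mathcal{F}_1,\mathcal{F}_2)$ of spanning sub-hypergraphs of $\mathcal{H}$ with each $\mathcal{F}_i$ being $\ell_i$-sparse and with $|E(\mathcal{F}_1\cup\mathcal{F}_2)|$ maximum. By the maximality of $|E(\mathcal{T})|$, any hyperedge $e\in E(\mathcal{H})\setminus E(\mathcal{T})$ makes both $\mathcal{F}_i+e$ fail to be $\ell_i$-sparse, so the vertex set $e$ lies inside some $\ell_i$-rigid sub-hypergraph of $\mathcal{F}_i$; let $Q_i$ be the minimal such sub-hypergraph. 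The hypergraph analogue of Proposition~\ref{prop:xGy-exchange} then guarantees that, for any $e'\in E(Q_i)$, the sub-hypergraph $\mathcal{F}_i-e'+e$ remains $\ell_i$-sparse, which defines the notion of a single edge-replacement of $\mathcal{T}$. Let $\mathcal{A}_0$ consist of all tuples in $\mathcal{A}$ reachable from $T$ by a finite sequence of such replacements, and set
\[
E(G_0)=\bigcup_{\mathcal{T}\in\mathcal{A}_0}\bigl(E(\mathcal{H})\setminus E(\mathcal{T})\bigr).
\]

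Next I would establish the central Claim: if vertices $x,y$ lie in an $\ell_i$-rigid sub-hypergraph of $\mathcal{F}'_i\cap G_0$ where $\mathcal{T}'$ is any single replacement of $\mathcal{T}\in\mathcal{A}_0$, then $x,y$ already lie in an $\ell_i$-rigid sub-hypergraph of $\mathcal{F}_i\cap G_0$. As in the graph proof, one combines the minimal rigid $Q_i\subseteq\mathcal{F}_i$ containing the swapped hyperedge $e'$ with the minimal rigid $Q'_i\subseteq\mathcal{F}'_i\cap G_0$ containing $x,y$, and shows that $(Q_i\cup Q'_i)-e'$ is $\ell_i$-rigid by the hypergraph analogue of Proposition~\ref{prop:Q1Q2e}. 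The $c$-intersecting supermodularity hypothesis is exactly what is needed at this step, since the overlap $V(Q_i)\cap V(Q'_i)\supseteq e'$ has size at least $c$ by the co-rank bound.

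Finally, define $P$ to be the partition of $V(\mathcal{H})$ into the vertex sets of the (hyper)connected components of $G_0$. Property~(2) is immediate from the definition of $G_0$, since every hyperedge of $E(\mathcal{H})\setminus E(F\cup\mathcal{F})$ belongs to $G_0$ and hence sits in a single component. Given any hyperedge $Z\in E(G_0)$ inside a component $C_0$, iterating the Claim backward through the replacement chain that excludes $Z$, starting from $T$, produces $\ell_i$-rigid sub-hypergraphs of $F_i\cap G_0$ containing any two prescribed vertices of $Z$; the hypergraph version of Proposition~\ref{prop:Q:subadditive}, together with the subadditivity of $\ell_i$, then forces the vertex set of such a minimal rigid sub-hypergraph to lie inside $V(C_0)$. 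Intersecting supermodularity of $l=\ell_1$ then allows one to glue these pairwise $l$-rigid pieces of $F\cap C_0$ into a single $l$-partition-connected spanning sub-hypergraph of $F[V(C_0)]$, yielding Property~(1); the analogous gluing in $\mathcal{F}\cap C_0$ using $c$-intersecting supermodularity of $\ell$ delivers Property~(3).

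The main obstacle will be formulating and verifying the hypergraph version of Proposition~\ref{prop:Q:subadditive}, which is what forces $V(Q_i)\subseteq V(C_0)$ at the end. Concretely, for a minimal $\ell_i$-rigid sub-hypergraph $Q_i$ containing a given pair of vertices, one must show that every proper vertex subset containing that pair is crossed by some hyperedge of $Q_i$; this is where subadditivity of $\ell_i$ enters, and it is the only place where the combinatorics of hyperedges differs substantively from edges. Once that analogue is in hand, all other steps of the graph proof transfer verbatim.
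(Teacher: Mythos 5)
Your proposal matches the paper's approach exactly: the paper gives no separate proof of the hypergraph version, stating only that it "follows with the same arguments" as the graph case (Theorem~\ref{thm:generalized:D}), and your plan is precisely that transfer, correctly isolating the one substantive point --- namely that the co-rank bound $c\ge 2$ makes the overlap of the two rigid pieces sharing a hyperedge have size at least $c$, so the $c$-intersecting supermodularity of $\ell$ applies in the analogue of Proposition~\ref{prop:Q1Q2e}, and that the analogue of Proposition~\ref{prop:Q:subadditive} is what confines each minimal rigid piece to a single component of $G_0$. I see no gap relative to what the paper itself offers.
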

\begin{thm}
{Let $\mathcal{H}$ be a hypergraph with the rank $r$ and co-rank $c$, $c \ge 2$,  let $l$ be a nonincreasing  intersecting supermodular  nonnegative   integer-valued function on subsets of $V(\mathcal{H})$, and let $\ell$ be a $c$-intersecting supermodular  subadditive nonnegative   integer-valued function on subsets of $V(\mathcal{H})$.
 If for each vertex $v$, $d_\mathcal{H}(v)\ge r\ell(v)+rl(v)$ and for any two disjoint vertex  sets $A$ and $B$ with $A\cup  B\subsetneq V(\mathcal{H})$  and $e_{\mathcal{H}}(A\cup B)> \sum_{v\in A\cup B}\ell(v)-\ell(A\cup B)$, 
$$d_{\mathcal{H}\ominus B}(A) \ge
r\ell(A\cup B)-\frac{r}{2}\sum_{v\in B} \ell(v)+
 \begin{cases}
0,	&\text{when $A=\emptyset$};\\
r l(A\cup B),	&\text{when $A\neq \emptyset$},
\end {cases}$$
then $\mathcal{H}$ can be decomposed into a spanning $l$-partition-connected  sub-hypergraph  and a spanning $\ell$-rigid  sub-hypergraph, and also a given edge set of size at most $l(\mathcal{H})+\ell(\mathcal{H})$.
}\end{thm}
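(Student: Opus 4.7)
The plan is to mimic the proof of Theorem~\ref{thm:partition-connected-rigid}, with the factor $2$ that arose from an edge having two endpoints replaced by the rank $r$ that arises from a hyperedge having at most $r$ vertices. Let $E$ be the given edge set of size at most $l(\mathcal{H})+\ell(\mathcal{H})$. I would begin by choosing two edge-disjoint spanning sub-hypergraphs $F$ and $\mathcal{F}$ of $\mathcal{H}\setminus E$ with the maximum value of $|E(F)|+|E(\mathcal{F})|$ subject to $F$ being $l$-sparse and $\mathcal{F}$ being $\ell$-sparse. Applying the hypergraph version of Theorem~\ref{thm:generalized:D} (the preceding theorem in this subsection) produces a partition $P$ of $V(\mathcal{H})$ satisfying its three listed properties. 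I would then set $\mathcal{A}$ to be the collection of vertex sets of maximal $\ell$-rigid sub-hypergraphs of the various $\mathcal{F}[A]$ with $A\in P$, let $\mathcal{A}_0=\{X\in\mathcal{A}:e_{\mathcal{H}}(X)=\sum_{v\in X}\ell(v)-\ell(X)\}$, and assemble $\mathcal{P}$ from $\mathcal{A}\setminus\mathcal{A}_0$ together with the singletons $\{v\}$ for $v\notin\bigcup_{X\in\mathcal{A}\setminus\mathcal{A}_0}X$. For each $X\in\mathcal{P}$, let $X_B$ consist of the vertices lying in at least two members of $\mathcal{P}$ and set $X_A=X\setminus X_B$. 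The identity $e_{\mathcal{H}}(\mathcal{P})=e_{\mathcal{F}}(\mathcal{P})+e_F(P)$ then follows, as in the graph case, from items (2) and (3) of the partition theorem together with the tightness $e_\mathcal{H}(X)=e_\mathcal{F}(X)$ for $X\in\mathcal{A}_0$.

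The heart of the proof is the double-counting estimate
$$e_{\mathcal{H}\setminus E}(\mathcal{P})\ge\tfrac{1}{r}\sum_{X\in\mathcal{P}}d_{\mathcal{H}\ominus X_B}(X_A)-|E|,$$
which holds because every hyperedge has at most $r$ vertices and is therefore counted at most $r$ times in the inner sum. I would feed the hypothesis with $A=X_A$ and $B=X_B$ into this bound (using $d_\mathcal{H}(v)\ge r\ell(v)+rl(v)$ to handle the singleton sets $\{v\}\in\mathcal{P}$) and divide by $r$; after this division every factor of $r$ cancels cleanly with the $r$ already built into the hypothesis, reproducing term for term the lower bound obtained in the graph proof:
$$e_{\mathcal{H}\setminus E}(\mathcal{P})\ge\sum_{X\in\mathcal{P}}\ell(X)-\tfrac{1}{2}\sum_{X\in\mathcal{P}}\sum_{v\in X_B}\ell(v)+\sum_{X\in\mathcal{P},\,X_A\ne\emptyset}l(X)-l(\mathcal{H})-\ell(\mathcal{H}).$$

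To close the argument I would reprove the claim that every $Q\in P$ contains some $X\in\mathcal{P}$ with $X_A\ne\emptyset$; the proof given in Theorem~\ref{thm:partition-connected-rigid} uses only weak subadditivity of $\ell$ (specifically $\sum_{v\in X}\ell(v)\ge2\ell(X)$) and so transfers verbatim to the hypergraph setting. Combined with the nonincreasingness of $l$ this yields $\sum_{X\in\mathcal{P},\,X_A\ne\emptyset}l(X)\ge\sum_{Q\in P}l(Q)$. Inserting this together with the baseline estimate $\sum_{X\in\mathcal{P}}\sum_{v\in X}\ell(v)-\tfrac{1}{2}\sum_{X\in\mathcal{P}}\sum_{v\in X_B}\ell(v)\ge\sum_{v\in V(\mathcal{H})}\ell(v)$ and expanding $|E(F)|$ and $|E(\mathcal{F})|$ through the partition $P$ and refinement $\mathcal{P}$ collapses the entire chain to $|E(F)|+|E(\mathcal{F})|\ge\sum_{v\in V(\mathcal{H})}(l(v)+\ell(v))-l(\mathcal{H})-\ell(\mathcal{H})$, which forces equality throughout and hence $F$ to be $l$-partition-connected and $\mathcal{F}$ to be $\ell$-rigid. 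The main obstacle is the central $1/r$ counting inequality: one must verify that it remains valid when $\mathcal{P}$ is only a covering (and not a partition) of $V(\mathcal{H})$, and that the factor of $r$ built into the hypothesis on $d_{\mathcal{H}\ominus B}(A)$ precisely compensates the overcounting so that no slack is incurred relative to the graph case.
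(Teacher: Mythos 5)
Your proposal is correct and follows essentially the same route as the paper, which gives no separate argument for the hypergraph case and simply declares that it follows by the same reasoning as Theorem~\ref{thm:partition-connected-rigid}. Your adaptation is exactly the intended one: the only genuinely new point is the double-counting inequality $e_{\mathcal{H}\setminus E}(\mathcal{P})\ge\frac{1}{r}\sum_{X\in\mathcal{P}}d_{\mathcal{H}\ominus X_B}(X_A)-|E|$, which you justify correctly (a hyperedge counted for $X$ meets $X_A$ and is not contained in $X$, hence, since the sets $X_A$ are pairwise disjoint and no member of $\mathcal{P}$ can contain it, it contributes to $e_{\mathcal{H}}(\mathcal{P})$ and is counted at most $r$ times), after which the built-in factor $r$ in the hypothesis cancels exactly as the factor $2$ does in the graph case.
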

\begin{cor}
{Let $\mathcal{H}$ be a hypergraph with the rank $r$ and co-rank $c$, $c \ge 2$,  let $l$ be a nonincreasing  intersecting supermodular  nonnegative   integer-valued function on subsets of $V(\mathcal{H})$, and let $\ell$ be a $c$-intersecting supermodular  subadditive nonnegative   integer-valued function on subsets of $V(\mathcal{H})$ with $l(\mathcal{H})=\ell(\mathcal{H})=0$.
 If  for each vertex $v$, $d_\mathcal{H}(v)\ge r\ell(v)+rl(v)$ and for any two disjoint vertex  sets $A$ and $B$ with $A\cup  B\subsetneq V(\mathcal{H})$  and $e_{\mathcal{H}}(A\cup B)> \sum_{v\in A\cup B}\ell(v)-\ell(A\cup B)$, 
$$d_{\mathcal{H}\ominus B}(A) \ge
r\ell(A\cup B)-\frac{r}{2}\sum_{v\in B} \ell(v)+
 \begin{cases}
0,	&\text{when $A=\emptyset$};\\
r l(A\cup B),	&\text{when $A\neq \emptyset$},
\end {cases}$$
then $\mathcal{H}$ has  an orientation along with two edge-disjoint  spanning sub-hypergraphs $H_1$ and $H_2$ 
such that $H_1$ is $l$-arc-connected, $H_2$ is $\ell$-arc-connected,
and  for each vertex $v$, 
$d^-_{H_1}(v)=l(v)$, $d^-_{H_2}(v)=\ell(v)$, and 
$$d^+_{\mathcal{H}}(v) \le \lceil\frac{r-1}{r}d_\mathcal{H}(v) \rceil.$$
Furthermore, for a given arbitrary  vertex $u$ the upper bound can be reduced to  $ \lfloor  \frac{r-1}{r} d_\mathcal{H}(u)\rfloor$.
}\end{cor}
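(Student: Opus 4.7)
The strategy is to imitate the graph version that appears immediately before this corollary. Introduce an auxiliary ``slack'' set function $l_0$ defined on singletons by
\[
l_0(v) = \big\lfloor d_\mathcal{H}(v)/r \big\rfloor - l(v) - \ell(v),
\]
and by $l_0(A)=0$ for $|A|\ge 2$ (also $l_0(\emptyset)=l_0(\mathcal{H})=0$). The hypothesis $d_\mathcal{H}(v)\ge r\ell(v)+rl(v)$ makes $l_0$ nonnegative, and since $l_0$ vanishes on all sets of size at least two, it is routinely nonincreasing and intersecting supermodular.

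The plan is then to produce a three-fold edge-disjoint decomposition $\mathcal{H} \supseteq H_0 \cup H_1 \cup H_2$, where $H_0$ is spanning $l_0$-partition-connected, $H_1$ spanning minimally $l$-partition-connected, and $H_2$ spanning minimally $\ell$-rigid. To that end, first apply the hypergraph version of Theorem~\ref{thm:partition-connected-rigid} with $l$ replaced by $l+l_0$: the connectivity inequality for $|A\cup B|\ge 2$ is unchanged because $l_0$ vanishes there, and at a singleton $v$ the degree requirement reduces to $d_\mathcal{H}(v)\ge r\ell(v)+r(l+l_0)(v)=r\lfloor d_\mathcal{H}(v)/r\rfloor\le d_\mathcal{H}(v)$, which holds trivially. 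This yields a spanning $(l+l_0)$-partition-connected sub-hypergraph $H^\ast$ together with a spanning minimally $\ell$-rigid sub-hypergraph $H_2$. Then apply the hypergraph analogue of Theorem~\ref{thm:main:partition-connected} to $H^\ast$ to split it into $H_0$ and $H_1$ with the desired partition-connected structure; note $H_1$ can be taken minimally $l$-partition-connected, so $|E(H_1)|=\sum_{v}l(v)$.

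Now orient each piece separately. Theorem~\ref{thm:rigid:orientation:generalized} applied to $H_2$ provides an $\ell$-arc-connected orientation with $d^-_{H_2}(v)=\ell(v)$. For $H_1$, the Frank--Kir\'aly--Kir\'aly orientation theorem stated at the start of Section~9 gives an $l$-arc-connected orientation, and because $H_1$ is minimally $l$-partition-connected we have $|E(H_1)|=\sum_v l(v)=\sum_v d^-_{H_1}(v)$, so combined with $d^-_{H_1}(v)\ge l(v)$ we must in fact have $d^-_{H_1}(v)=l(v)$. The same theorem applied to $H_0$ produces an $l_0$-arc-connected orientation with $d^-_{H_0}(v)\ge l_0(v)$. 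Orient the remaining hyperedges of $\mathcal{H}$ arbitrarily and combine; then for every $v$,
\[
d^-_\mathcal{H}(v)\ge l_0(v)+l(v)+\ell(v)=\big\lfloor d_\mathcal{H}(v)/r\big\rfloor,
\]
and since each hyperedge at $v$ contributes to exactly one of $d^-$ or $d^+$, we obtain $d^+_\mathcal{H}(v)\le d_\mathcal{H}(v)-\lfloor d_\mathcal{H}(v)/r\rfloor=\lceil(r-1)d_\mathcal{H}(v)/r\rceil$, as required.

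The main obstacle is the ``furthermore'' clause improving the bound at a distinguished vertex $u$ to $\lfloor (r-1)d_\mathcal{H}(u)/r\rfloor$. When $r\mid d_\mathcal{H}(u)$ there is nothing to do, so the difficulty is to squeeze one extra head-hyperedge into $u$ when $r\nmid d_\mathcal{H}(u)$. I would handle this by raising $l_0(u)$ by one, which forces $d^-_{H_0}(u)\ge l_0(u)+1$ in any $l_0$-arc-connected orientation of $H_0$; the delicate part is re-verifying the hypotheses of the combined decomposition theorem at the singleton $\{u\}$, where the degree condition becomes tight and needs the same kind of ``minor modification'' to the proof of Theorem~\ref{thm:partition-connected-rigid} that the author invokes in the graph case. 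I expect this to work because the connectivity inequality in Theorem~\ref{thm:partition-connected-rigid} is never triggered for singletons (co-rank $\ge 2$ excludes loops, so $e_\mathcal{H}(\{u\})=0$), leaving only the vertex-degree bound to reconcile.
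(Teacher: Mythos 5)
Your proposal is correct and follows essentially the same route as the paper: the paper gives no separate proof for this hypergraph corollary (Section 9.2 defers to the graph-version arguments), and the graph version in Section 7 is proved by exactly your device of introducing $l_0(v)=\lfloor d_G(v)/2\rfloor-\ell(v)-l(v)$ (here $\lfloor d_{\mathcal{H}}(v)/r\rfloor-\ell(v)-l(v)$), decomposing via the partition-connected--rigid theorem together with Theorem~\ref{thm:main:partition-connected}, orienting each piece with exact in-degrees, and handling the distinguished vertex $u$ by the same acknowledged ``minor modification'' of Theorem~\ref{thm:partition-connected-rigid}. The only cosmetic difference is that you orient $H_0$ and $H_1$ via the Frank--Kir\'aly--Kir\'aly theorem plus a counting argument, whereas the paper invokes Theorem~\ref{thm:rigid:orientation:generalized} for all three pieces; both yield the same exact in-degrees.
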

%
%
%
%
%
%
%
%
%
%
%
%
%
%
%
%
\section{Applications}
\label{subsec:application:degree}
The following  theorem improves  Theorem~4.1 in~\cite{MR3619513} by imposing a bound on degrees.
\begin{thm}\label{thm:app:rigid}
{Every $k$-weakly $(4kp-2p+2m)$-connected simple graph $G$ with  $k\ge 2$ has a spanning subgraph $H$ containing  a  packing  of $m$ spanning  trees and $p$ spanning $k$-rigid   subgraphs such that for each vertex $v$, 
 $$d_H(v)\le \lceil \frac{d_G(v)}{2}\rceil +kp+m.$$
}\end{thm}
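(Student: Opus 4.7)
The plan is to apply Corollary~\ref{cor:partition-connected-rigid:degrees} with the functions $l = m\cdot l_{1,1}$ (constant $m$) and $\ell = p\cdot l_{k,2k-1}$, and then to split each piece of the resulting packing via Theorem~\ref{thm:main:partition-connected} (into $m$ spanning trees) and Theorem~\ref{thm:rigid:decomposition} (into $p$ spanning $k$-rigid subgraphs).

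First I would verify that $l$ and $\ell$ satisfy the structural hypotheses of the corollary. Since $l$ is constant it is trivially nonincreasing, intersecting supermodular, nonnegative, and integer-valued. For $\ell = p\cdot l_{k,2k-1}$ a quick case analysis on $|A\cap B|$, $|A|$, $|B|$ shows that it is $2$-intersecting supermodular and subadditive; equality holds in the supermodular inequality whenever $|A\cap B|\ge 2$, and subadditivity is $(2k-1)p \le k|A|p + k|B|p$-type in the mixed cases. Both functions are obviously nonnegative integer-valued.

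Next I would translate the $k$-weakly $(4kp-2p+2m)$-connectivity of $G$ into the degree inequality required by the corollary. The hypothesis gives $d_{G-B}(A) \ge (4kp-2p+2m) - k|B|$ for all disjoint $A,B$ with $A\ne\emptyset$ and $A\cup B\subsetneq V(G)$. Taking $B=\emptyset$ and $A=\{v\}$ yields $d_G(v)\ge 4kp-2p+2m\ge 2kp+2m = 2\ell(v)+2l(v)$. For $A\ne\emptyset$, $B\ne\emptyset$ with $|A\cup B|\ge 2$, I must show
\[
4kp - 2p + 2m - k|B| \;\ge\; (4k-2)p - kp|B| + 2m,
\]
which rearranges to $k|B|(p-1)\ge 0$, valid since $p\ge 1$. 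The remaining boundary cases ($A=\emptyset$ or $|A\cup B|=1$) are either vacuous or follow from the minimum-degree bound just established. Hence the corollary applies and produces a spanning subgraph $H$, an edge-disjoint union of a spanning $l$-partition-connected subgraph $F_1$ and a spanning $\ell$-rigid subgraph $F_2$, satisfying
\[
d_H(v)\le \bigl\lceil \tfrac{d_G(v)}{2}\bigr\rceil + l(v)+\ell(v) = \bigl\lceil \tfrac{d_G(v)}{2}\bigr\rceil + m + kp.
\]

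Finally I would decompose each of $F_1$ and $F_2$. Since $F_1$ is $(m\cdot l_{1,1})$-partition-connected and $l_{1,1}$ is itself intersecting supermodular and subadditive, Theorem~\ref{thm:main:partition-connected} applied with $l_1=\cdots=l_m=l_{1,1}$ decomposes $F_1$ into $m$ edge-disjoint spanning $l_{1,1}$-partition-connected subgraphs, each of which is connected and therefore contains a spanning tree. For $F_2$, set $\ell'=l_{k,2k-1}$ so that $\ell'(u)+\ell'(v)=2k=(2k-1)+1=\ell'(\{u,v\})+1$ for any adjacent $u,v$; thus Theorem~\ref{thm:rigid:decomposition} applies to the $p\ell'$-rigid graph $F_2$ and yields $p$ edge-disjoint spanning $k$-rigid subgraphs. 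Assembling the two decompositions inside $H$ completes the proof. I do not anticipate a serious obstacle: the whole argument is a careful alignment of hypotheses, and the only delicate point is that the crucial inequality $k|B|(p-1)\ge 0$ is precisely what the factor $4kp-2p$ in the connectivity assumption was designed to supply.
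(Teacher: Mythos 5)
Your proposal is correct and follows essentially the same route as the paper: the paper's proof is exactly "apply Corollary~\ref{cor:partition-connected-rigid:degrees} with $\ell=p\ell_{k,2k-1}$ and $l=l_{m,m}$ (which is the same constant function as your $m\cdot l_{1,1}$), then apply Theorems~\ref{thm:main:partition-connected} and~\ref{thm:rigid:decomposition}." You have merely written out the hypothesis-checking (including the key inequality $k|B|(p-1)\ge 0$ and the use of simplicity to make the small-set cases vacuous) that the paper leaves implicit.
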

\begin{proof}
{Apply   Corollary~\ref{cor:partition-connected-rigid:degrees}
 with $\ell=p\ell_{k,2k-1}$ and $l=l_{m,m}$, 
and  next apply Theorems~\ref{thm:main:partition-connected} and~\ref{thm:rigid:decomposition}.
}\end{proof}
The next  result  improves  Theorem~1.4 in~\cite{MR3619513}
 by  replacing  essentially edge-connectivity by  edge-connectivity.
\begin{thm}\label{thm:application:base}
{Every $k$-weakly $(4kp-2p+2m)$-connected simple graph $G$ with  $k\ge 2$ has a spanning subgraph $H$ containing  a  packing  of $m$ spanning  trees and $p$ spanning $k$-rigid $(2k-1)$-edge-connected  subgraphs such that for each vertex $v$, 
 $$d_H(v)\le \lceil \frac{d_G(v)}{2}\rceil +2kp-p+m.$$
}\end{thm}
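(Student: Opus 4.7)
The plan is to apply Corollary~\ref{cor:partition-connected-rigid:degrees} with a rigid function that already encodes both $k$-rigidity and $(2k-1)$-edge-connectedness, namely $\ell = p\ell_{2k-1,2k-1}$ in place of the $p\ell_{k,2k-1}$ used in Theorem~\ref{thm:app:rigid}, together with $l = l_{m,m}$. Splitting the resulting $\ell$-rigid piece into $p$ spanning $(2k-1)$-partition-connected subgraphs will supply both properties at once.

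The minimum-degree hypothesis of Corollary~\ref{cor:partition-connected-rigid:degrees} reads $d_G(v) \ge 2\ell(v)+2l(v) = 2(2k-1)p + 2m = 4kp-2p+2m$, supplied by $k$-weakly $(4kp-2p+2m)$-connectedness at $B=\emptyset$. The two-set cut hypothesis, for trigger-activated $(A,B)$ with $A\neq\emptyset$ and $|A\cup B|\ge 2$, demands
\[
d_{G-B}(A) \;\ge\; 2(2k-1)p - (2k-1)p\,|B| + 2m,
\]
whereas the $k$-weakly hypothesis furnishes $d_{G-B}(A) \ge 4kp-2p+2m - k|B|$; the latter dominates the former since their difference equals $((2k-1)p - k)|B|$, which is nonnegative for every $k\ge 2$. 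Corollary~\ref{cor:partition-connected-rigid:degrees} then yields a spanning subgraph $H$ of $G$ with
\[
d_H(v) \;\le\; \Bigl\lceil \tfrac{d_G(v)}{2}\Bigr\rceil + l(v)+\ell(v) \;=\; \Bigl\lceil \tfrac{d_G(v)}{2}\Bigr\rceil + m + (2k-1)p \;=\; \Bigl\lceil \tfrac{d_G(v)}{2}\Bigr\rceil + 2kp - p + m,
\]
decomposed as $H = H^{\mathrm{pc}} \cup H^{\mathrm{rg}}$, where $H^{\mathrm{pc}}$ is $l$-partition-connected (equivalently $m$-partition-connected) and $H^{\mathrm{rg}}$ is $p\ell_{2k-1,2k-1}$-rigid, and in particular $p\ell_{2k-1,2k-1}$-partition-connected.

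Next, apply Theorem~\ref{thm:main:partition-connected} twice: once to $H^{\mathrm{pc}}$ with $l_1=\cdots=l_m=l_{1,1}$, yielding $m$ edge-disjoint spanning trees $T_1,\ldots,T_m$; and once to $H^{\mathrm{rg}}$ with $l_1=\cdots=l_p=\ell_{2k-1,2k-1}$, yielding $p$ edge-disjoint spanning $(2k-1)$-partition-connected subgraphs $H_1,\ldots,H_p$. Each $H_i$ is automatically $(2k-1)$-edge-connected, as $d_{H_i}(A)\ge 2k-1$ for every proper nonempty $A$ is the bipartition instance of $(2k-1)$-partition-connectedness; it is also $k$-rigid, because $\ell_{2k-1,2k-1}\ge \ell_{k,2k-1}$ pointwise makes every $\ell_{2k-1,2k-1}$-partition-connected graph also $\ell_{k,2k-1}$-partition-connected, whence the sparsity-matroid equivalence guarantees a spanning minimally $k$-rigid subgraph. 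The collection $\{T_1,\ldots,T_m,H_1,\ldots,H_p\}$ is therefore the desired packing inside $H$, and the degree bound $d_H(v)\le \lceil d_G(v)/2\rceil + 2kp-p+m$ is already built in. The one subtle point is the choice $\ell = p\ell_{2k-1,2k-1}$, strictly larger than the natural $p\ell_{k,2k-1}$; it is precisely this upgrade that makes the cut hypothesis verifiable from $k$-weakly $(4kp-2p+2m)$-connectedness while simultaneously baking the $(2k-1)$-edge-connectedness of each rigid piece into the decomposition.
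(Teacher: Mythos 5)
Your use of Corollary~\ref{cor:partition-connected-rigid:degrees} with $\ell=p\ell_{2k-1,2k-1}$ and $l=l_{m,m}$ is correctly verified and does produce a subgraph $H$ with the stated degree bound, decomposed into an $m$-partition-connected piece and a $(2k-1)p$-tree-connected piece, and Theorem~\ref{thm:main:partition-connected} does split the latter into $p$ spanning $(2k-1)$-partition-connected, hence $(2k-1)$-edge-connected, subgraphs $H_1,\ldots,H_p$. The gap is the final claim that each $H_i$ is $k$-rigid. You argue that $\ell_{2k-1,2k-1}$-partition-connectedness implies $\ell_{k,2k-1}$-partition-connectedness (true) and then invoke the equivalence of partition-connectivity with rigidity; but the paper states that equivalence only for \emph{intersecting} supermodular functions, and $\ell_{k,2k-1}$ is merely $2$-intersecting supermodular: for $|A\cap B|=1$ and $|A|,|B|\ge 2$ one gets $\ell(A\cap B)+\ell(A\cup B)=3k-1<4k-2=\ell(A)+\ell(B)$ when $k\ge 2$. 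Worse, the implication you need is simply false. For $k=2$, take two copies of $K_6$ sharing a single vertex: each $K_6$ decomposes into three Hamiltonian paths, and the union of corresponding paths is a spanning tree of the whole graph, so this graph decomposes into three spanning trees and is $\ell_{3,3}$-partition-connected; yet every $\ell_{2,3}$-sparse subgraph has at most $9+9=18$ edges while $2n-3=19$, so it is not $2$-rigid. Since Theorem~\ref{thm:main:partition-connected} gives no control beyond $(2k-1)$-partition-connectivity, your $H_i$ need not be $k$-rigid; by replacing $p\ell_{k,2k-1}$ with the constant function $p\ell_{2k-1,2k-1}$ you have discarded exactly the count $e(X)\le kp|X|-(2k-1)p$ that carries the rigidity.

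The paper avoids this by keeping $\ell=p\ell_{k,2k-1}$, so that Theorem~\ref{thm:rigid:decomposition} (whose hypothesis $\ell(u)+\ell(v)=\ell(\{u,v\})+1$ holds for $\ell_{k,2k-1}$ but not for $\ell_{2k-1,2k-1}$) really produces $p$ spanning $k$-rigid subgraphs $G_i$, and instead enlarging the partition-connected side to $l=l_{kp-p+m,m}$; this splits into $m$ spanning trees plus $p$ spanning $l_{k-1,0}$-partition-connected subgraphs $G_i'$ of minimum degree at least $k-1$. Since each $G_i$ is $k$-edge-connected and essentially $(2k-1)$-edge-connected by Corollary~\ref{cor:essentially}, the union $H_i=G_i\cup G_i'$ has minimum degree at least $k+(k-1)=2k-1$ and is therefore $(2k-1)$-edge-connected while still containing the $k$-rigid $G_i$; the degree budget $(kp-p+m)+kp=2kp-p+m$ is the same as yours. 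Any repair of your route must keep the $\ell_{k,2k-1}$ count through the splitting step, which is essentially what the paper does.
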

\begin{proof}
{By applying   Corollary~\ref{cor:partition-connected-rigid:degrees}
 with $\ell=p\ell_{k,2k-1}$ and $l=l_{kp-p+m,m}$, 
and  next applying Theorems~\ref{thm:main:partition-connected} and~\ref{thm:rigid:decomposition}, 
 one can conclude that $G$  has a spanning subgraph $H$ containing  a packing of 
 $p$ spanning $k$-rigid subgraphs $G_1,\ldots, G_p$, 
and $p$ spanning $l_{k-1,0}$-partition-connected subgraphs  $G'_1,\ldots, G'_p$, 
and  also $m$ spanning trees $T_1,\ldots, T_m$
such that for each vertex $v$, 
$d_H(v)\le \lceil \frac{d_G(v)}{2}\rceil +2kp-p+m$.
By Corollary~\ref{cor:essentially}, every graph $G_i$ is $k$-edge-connected and  essentially $(2k-1)$-edge-connected.
Define $H_i=G_i\cup G'_i$. 
Since 
$\delta(H_i) \ge \delta(G_i) +\delta(G'_i)\ge k+(k-1)=2k-1,$
the graph $H_i$ must be $(2k-1)$-edge-connected.
Now, it is enough to consider  the graphs $H_1,\ldots, H_p$ and $T_1,\ldots, T_m$ as the packing of $H$ with the desired properties. Note that $G$ could have multiple edges with multiplicity at most $p$.
}\end{proof}
%
%
\subsection{$2$-connected $(2k-1)$-edge-connected $\{r-3,r-1\}$-factors}

Recently, the present author~\cite{II} showed that
every $(2\lceil r/6\rceil+2k)$-edge-connected  $r$-regular graph of even order with $r\ge 4$ has a 
$k$-tree-connected $\{r-3,r-1\}$-factor.
In the following, we  improve  this result for   highly connected graphs.
Before doing so, we  recall   the following lemma.
\begin{lem}{\rm(\cite{II})}\label{lem:odd-forest}
{Every $m$-tree-connected graph $G$ has a spanning forest $F$ with odd degrees
such that  for each vertex $v$, 
 $d_F(v)\le \lceil\frac{d_G(v)}{m}\rceil$.
}\end{lem}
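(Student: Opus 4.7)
The plan is to combine the Nash--Williams--Tutte theorem with a parity-based tree recursion. My approach has two main stages: first, extract from $G$ a spanning tree whose degree at every vertex is at most the target $\lceil d_G(v)/m\rceil$; second, trim this tree to a spanning subforest in which every vertex has odd degree.

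For the first stage, since $G$ is $m$-tree-connected, the Nash--Williams--Tutte theorem furnishes $m$ edge-disjoint spanning trees $T_1,\ldots, T_m$ of $G$. Their union satisfies $\sum_{i=1}^{m} d_{T_i}(v)\le d_G(v)$ at each vertex $v$, so the average tree-degree of $v$ is at most $d_G(v)/m$. I would argue, via a matroid exchange in the graphic matroid of $G$, that among all spanning trees of $G$ one may choose a single $T$ satisfying $d_T(v)\le \lceil d_G(v)/m\rceil$ simultaneously for every $v$. Concretely, starting from any spanning tree $T$ of $G$ and any vertex $v$ violating the bound, one notes that some $T_i$ must have $d_{T_i}(v)$ strictly smaller than $d_T(v)$; one then swaps an edge of $T$ incident to $v$ with an edge of $T_i$ not incident to $v$ (using the fundamental cycle of the new edge in $T$) so as to strictly reduce the excess at $v$. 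Iterating, we reach the desired $T$. Alternatively, this degree-bounded tree can be extracted directly from Edmonds' matroid union theorem applied to the $m$ copies of the graphic matroid.

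For the second stage, root $T$ at an arbitrary vertex $r$ and process the non-root vertices in bottom-up order. For each non-root vertex $v$, place the parent edge of $v$ into $F$ precisely when the number of already-chosen child edges of $v$ is even; this forces $d_F(v)$ to be odd by construction. A parity computation --- the identity $\sum_v d_F(v)=2|E(F)|$ is even, and the other $|V(G)|-1$ vertices contribute odd values --- then shows that the root also has odd degree in $F$, provided $|V(G)|$ is even (which is implicit whenever a spanning subforest with all odd degrees can exist at all). Since $F\subseteq T$, we finally obtain $d_F(v)\le d_T(v)\le \lceil d_G(v)/m\rceil$ for every vertex $v$.

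The main technical obstacle I foresee lies in the first stage: producing a spanning tree that is simultaneously degree-bounded at every vertex via a single exchange argument is delicate, since each swap must preserve the spanning-tree property and must not re-create a violation at another vertex. Invoking matroid union cleanly bypasses this bookkeeping but transfers the effort into setting up the matroid framework; either way, this is where the real work of the lemma happens, while the odd-degree recursion in the second stage is essentially routine.
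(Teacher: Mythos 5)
The paper does not actually prove this lemma; it is quoted from~\cite{II}, so there is no internal proof to compare against. Judged on its own terms, your second stage is fine: rooting $T$, adding each parent edge exactly when the number of already-chosen child edges is even, and using the parity identity $\sum_v d_F(v)=2|E(F)|$ to handle the root is correct and standard, and you are right that even order is implicitly required. The gap is entirely in Stage 1, exactly where you locate the work. The exchange step does not do what you claim: if you add an edge $xy\in T_i$ with $x,y\neq v$ to $T$, its fundamental cycle in $T$ need not pass through $v$ at all, so no edge at $v$ can be deleted; if instead you first delete an edge of $T$ at $v$, you need an edge of $G$ crossing the resulting cut and avoiding $v$, which $m$-tree-connectivity does not guarantee; and even granting a single successful swap, you give no potential argument showing the process terminates without re-creating violations at other vertices. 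The appeal to Edmonds' matroid union is also misplaced: matroid union yields the packing $T_1,\dots,T_m$ (that is Nash--Williams--Tutte), but degree constraints are not matroidal, so it cannot produce a single spanning tree satisfying $d_T(v)\le\lceil d_G(v)/m\rceil$ at every vertex.

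More fundamentally, the intermediate statement you are reducing to is much stronger than the lemma and should not be expected to hold. The lemma asks for a spanning \emph{forest}, which may be disconnected; Stage 1 insists on a spanning \emph{tree}. For a $4$-regular $4$-edge-connected graph with $m=2$, the lemma asserts only the existence of a perfect matching (since $d_F(v)$ must be odd and at most $\lceil 4/2\rceil=2$, hence equal to $1$), which follows from classical matching theory for regular graphs of even order; your Stage 1 would instead assert that every such graph has a Hamiltonian path. A spanning tree with all degrees at most $2$ is exactly a Hamiltonian path, and there is no reason $4$-edge-connectivity should force traceability. So the reduction to a degree-bounded spanning tree is not merely unproven: it throws away precisely the freedom (disconnectedness of $F$) that makes the lemma tractable. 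A correct proof has to build the odd forest directly from the $m$ edge-disjoint spanning trees, e.g.\ via a parity-correction ($T$-join type) argument, rather than by first extracting one globally degree-bounded tree.
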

\begin{thm}
{Every $(2\lceil r/6\rceil+4k-2)$-connected  $r$-regular graph of even order with $r\ge 4$ has a $k$-rigid $\{r-3,r-1\}$-factor.
}\end{thm}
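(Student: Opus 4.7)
The plan is to reduce the statement to Theorem~\ref{thm:app:rigid} with $p=1$ and $m=\lceil r/6\rceil$, and then peel off an odd-degree spanning forest of controlled vertex degree via Lemma~\ref{lem:odd-forest}. Since an $l$-connected graph is $1$-weakly $l$-connected and hence $k$-weakly $l$-connected for every $k\ge 1$, the $(2\lceil r/6\rceil+4k-2)$-connectivity hypothesis is precisely what Theorem~\ref{thm:app:rigid} asks for with these parameters.

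With those parameters, Theorem~\ref{thm:app:rigid} yields a spanning subgraph $H_0\subseteq G$ that packs $m=\lceil r/6\rceil$ spanning trees $T_1,\ldots,T_m$ together with one spanning $k$-rigid subgraph $K$, and that satisfies $d_{H_0}(v)\le \lceil r/2\rceil+k+m$ for every $v\in V(G)$. Let $L=T_1\cup\cdots\cup T_m$; this is an $m$-tree-connected spanning subgraph. Corollary~\ref{cor:essentially} guarantees $d_K(v)\ge k$, so
\[
d_L(v)=d_{H_0}(v)-d_K(v)\le \lceil r/2\rceil+m.
\]
A short case check modulo $6$ verifies $\lceil r/2\rceil\le 3\lceil r/6\rceil=3m$, and hence $d_L(v)\le 4m$.

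Because $|V(L)|=|V(G)|$ is even, Lemma~\ref{lem:odd-forest} supplies a spanning forest $F\subseteq L$ in which every vertex has odd degree and
\[
d_F(v)\le \Bigl\lceil \tfrac{d_L(v)}{m}\Bigr\rceil\le \Bigl\lceil \tfrac{4m}{m}\Bigr\rceil=4.
\]
Oddness of $d_F(v)$ combined with $1\le d_F(v)\le 4$ forces $d_F(v)\in\{1,3\}$. Setting $H=G-E(F)$ then gives $d_H(v)=r-d_F(v)\in\{r-3,r-1\}$, so $H$ is a $\{r-3,r-1\}$-factor. Since $E(F)\subseteq E(L)$ is edge-disjoint from $E(K)$, we have $K\subseteq H$, and so $H$ inherits $k$-rigidity from $K$.

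The tight spot is the final bound on the degrees of $F$: it depends on two numerical matches working together, namely the $k$-edge-connectivity of $K$ absorbing the additive $k$ in the bound on $d_{H_0}$, and the elementary inequality $\lceil r/2\rceil\le 3\lceil r/6\rceil$. Together they deliver $d_L(v)/m\le 4$ and hence trap the odd-degree forest in $\{1,3\}$; everything else is bookkeeping.
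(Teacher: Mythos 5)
Your proof is correct and follows essentially the same route as the paper's: apply Theorem~\ref{thm:app:rigid} with $p=1$ and $m=\lceil r/6\rceil$, use the minimum degree $k$ of the $k$-rigid piece to bound $d_L(v)\le\lceil r/2\rceil+m\le 4m$, and then extract an odd-degree spanning forest with degrees in $\{1,3\}$ via Lemma~\ref{lem:odd-forest} and delete it. The only cosmetic difference is that you make explicit the inequality $\lceil r/2\rceil\le 3\lceil r/6\rceil$ and the weak-connectivity reduction, which the paper leaves implicit.
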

\begin{proof}
{Put $m=\lceil r/6\rceil$.
By Theorem~\ref{thm:app:rigid}, 
the graph $G$ contains two edge-disjoint spanning subgraphs $L'$ and $L$ such that $L'$ is $k$-rigid, 
$L$ is $m$-tree-connected, and also  for each vertex $v$, 
 $d_{L}(v)+d_{L'}(v)\le \lceil\frac{d_G(v)}{2}\rceil+k+m$.
Note that   for each vertex $v$, we must have $d_{L}(v) \le \lceil\frac{d_G(v)}{2}\rceil+m$.
By Lemma~\ref{lem:odd-forest}, the graph $L$ has a spanning forest  $F$  with odd degrees such that for each vertex $v$,
$d_F(v)\le \lceil\frac{d_{L}(v)}{m}\rceil\le \lceil\frac{d_{G}(v)}{2m}\rceil +1= 4$.
 It is not hard to check that $G\setminus E(F)$ is the desired spanning subgraph we are looking for.
}\end{proof}
%
%
%
%
\subsection{Arc-connected orientations of graphs}
Recently, Gu~\cite{MR3619513} showed that 
every  $(2k+1)$-weakly $(8k+4)$-connected simple graph has an orientation such that for each vertex $v$, 
$G-v$  remains $k$-arc-strong. 
In the following, we  strengthen  this result in the  same way by replacing   a special case of 
Theorem~\ref{thm:application:base}.
For this purpose, we first recall the following lemma due to  Kir\'aly and Szigeti (2006).
\begin{lem}{\rm (\cite{MR2236505})}\label{lem:Eulerian-Orientation}
{An Eulerian graph $G$  has a smooth orientation such that for each vertex $v$, 
the resulting directed graph $G-v$ is $k$-arc-strong, if and only if 
for each vertex $v$, the graph $G-v$ is $2k$-edge-connected.
}\end{lem}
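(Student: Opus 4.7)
The plan has two parts. Necessity is a short cut-count: if $G$ admits a smooth orientation under which each $G-v$ is $k$-arc-strong, then for any vertex $v$ and any nonempty proper $A\subsetneq V(G)-v$, both $d^-_{G-v}(A)\ge k$ and $d^+_{G-v}(A)=d^-_{G-v}(V(G)-v-A)\ge k$, so $d_{G-v}(A)=d^-_{G-v}(A)+d^+_{G-v}(A)\ge 2k$. Hence $G-v$ is $2k$-edge-connected.

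For sufficiency, the first observation will be that since $G$ is Eulerian a smooth orientation is precisely an Eulerian orientation, and in any Eulerian orientation $d^-(X)=d^+(X)=\tfrac12 d_G(X)$ for every vertex set $X$. Writing $e^+(v\!\to\!A)$ for the number of edges from $v$ to $A$ oriented out of $v$, the requirement ``$G-v$ is $k$-arc-strong'' translates, for every vertex $v\notin A$ and nonempty proper $A\subsetneq V(G)-v$, into the pair of inequalities
$$\tfrac12 d_G(A)-e^+(v\!\to\!A)\ge k \qquad\text{and}\qquad \tfrac12 d_G(A)-\bigl(e_G(v,A)-e^+(v\!\to\!A)\bigr)\ge k.$$
These are jointly feasible as bounds on $e^+(v\!\to\!A)$ if and only if $d_G(A)-e_G(v,A)=d_{G-v}(A)\ge 2k$, which is exactly the hypothesis. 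So the task reduces to producing an Eulerian orientation of $G$ that simultaneously meets these local box constraints over every pair $(v,A)$.

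To build such an orientation I would induct on $|V(G)|$ using undirected splitting-off at a chosen vertex $u$: pair the edges at $u$ and replace each pair $(ux,uy)$ by a new edge $xy$, obtaining a smaller Eulerian graph $G'$ on $V(G)-u$. If the pairing can be chosen so that $G'-v$ remains $2k$-edge-connected for every $v\in V(G)-u$, the inductive hypothesis gives $G'$ an Eulerian orientation with the desired property, which then lifts back to $G$ by orienting each pair $(ux,uy)$ consistently with the direction of the split edge $xy$ in $G'$. Checking the required cut inequalities for the lifted orientation is routine because cuts in $G$ that do not separate $x$ and $y$ agree with the corresponding cuts in $G'$, while cuts separating $x$ and $y$ gain exactly one unit in each direction.

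The main obstacle will be the existence of such a good complete splitting at $u$. Lov\'asz's classical splitting-off theorem only guarantees preservation of the global $2k$-edge-connectivity of $G-u$, whereas here one must preserve $2k$-edge-connectivity after additionally removing any $v\ne u$. Overcoming this will require a refined admissibility analysis tracking, for each $v\ne u$, the tight cuts of $G-u-v$ that threaten to become violated after a given pair is split --- essentially a strengthened splitting-off result in the spirit of Bang-Jensen--Frank--Jackson. An alternative route avoids induction altogether and instead invokes Frank's supermodular orientation theorem on a carefully designed set function encoding the family of constraints $d^-_{G-v}(A)\ge k$ uniformly in $(v,A)$; in that route the technical heart is to verify intersecting supermodularity of the chosen function.
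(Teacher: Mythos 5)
The paper does not prove this lemma at all: it is imported verbatim from Kir\'aly and Szigeti \cite{MR2236505}, so there is no internal proof to compare against. Judged on its own, your necessity argument is complete and correct (and does not even use smoothness). The sufficiency direction, however, is only a plan, and its central step is exactly the point you concede: the existence of a complete splitting-off at $u$ that preserves $2k$-edge-connectivity of $G'-v$ simultaneously for every $v\neq u$. This is not a routine strengthening of Lov\'asz's splitting-off theorem --- preserving connectivity after deleting an arbitrary additional vertex is essentially the content of the Kir\'aly--Szigeti result itself, so deferring it to ``a refined admissibility analysis'' leaves the argument circular in spirit and incomplete in substance. The alternative route has the same status: the constraints $d^-_{G-v}(A)\ge k$ quantify over a deleted vertex $v$ as well as a set $A$, so they are not of the form $d^-_G(X)\ge h(X)$ for a set function $h$ on subsets of $V(G)$, and you neither define the function you would feed to Frank's orientation theorem nor verify any supermodularity.

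There is also a structural flaw in the induction as designed. The inductive hypothesis on $G'$ (defined on $V(G)-u$) yields that $G'-w$ is $k$-arc-strong for every $w\in V(G)-u$, and your lifting argument addresses the cuts of $G-v$ only for $v\neq u$. But the lemma also requires $G-u$ to be $k$-arc-strong, and $G-u$ is $G'$ with all the split edges deleted; nothing in the inductive hypothesis controls the in-degrees of cuts of $G'$ after those edges are removed. You would need to build into the induction a statement about the orientation of $G'$ restricted to $E(G-u)$, or treat the vertex $u$ by a separate argument. As it stands, both halves of the sufficiency proof --- the simultaneous splitting step and the condition at the split vertex --- are missing, so the proposal does not constitute a proof of the lemma.
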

The following theorem improves Theorem 1.7 in~\cite{MR3619513}.
\begin{thm}\label{thm:arc-connected}
{Every  $(2k+1)$-weakly $(8k+4)$-connected simple graph $G$  has a $(2k+1)$-arc-strong smooth orientation such that for each vertex $v$, 
the resulting directed graph $G-v$  remains $k$-arc-strong.
}\end{thm}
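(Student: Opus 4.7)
The plan is to apply Theorem~\ref{thm:application:base} with the rigid parameter replaced by $2k+1$, $p=1$, and $m=1$: the required connectivity is $4(2k+1)-2+2=8k+4$ and the weakly-connectivity parameter is $2k+1$, both matching the hypothesis exactly. This yields edge-disjoint spanning subgraphs $H_1$ and $T$ of $G$ such that $H_1$ is $(2k+1)$-rigid and $(4k+1)$-edge-connected and $T$ is a spanning tree; by Corollary~\ref{cor:essentially} applied to $H_1$, the graph $H_1 - v$ is $2k$-edge-connected for every vertex $v$.

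Next I would repair the parities of $H_1$ using edges of $T$. Let $O$ be the (necessarily even-cardinality) set of vertices of odd degree in $H_1$. Since $T$ is a spanning tree, it contains an $O$-join $F \subseteq E(T)$, i.e., a subset with $d_F(v)$ odd iff $v\in O$. Then $H_1\cup F$ is Eulerian; moreover, since $H_1 \cup F \supseteq H_1$ is $(4k+1)$-edge-connected with every cut of even size, every cut of $H_1\cup F$ has size at least $4k+2$, and $(H_1 \cup F) - v \supseteq H_1 - v$ is $2k$-edge-connected for every $v$. Applying Lemma~\ref{lem:Eulerian-Orientation}, $H_1 \cup F$ admits a smooth---hence balanced, since it is Eulerian---orientation in which $(H_1 \cup F) - v$ is $k$-arc-strong for every $v$; balance forces at least $2k+1$ arcs in each direction across every cut, so the orientation is $(2k+1)$-arc-strong.

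Finally I would orient the residual subgraph $R = G - E(H_1 \cup F)$ so that $|d^+_R(v) - d^-_R(v)| \le 1$ for every vertex $v$ (a standard near-Eulerian orientation, obtained for instance by adjoining a dummy vertex to all odd-degree vertices of $R$, orienting along an Eulerian tour, and removing the dummy). Combining with the balanced orientation of $H_1 \cup F$ yields an orientation of $G$ that is smooth (the balanced part contributes $0$ to $d^+_G - d^-_G$), remains $(2k+1)$-arc-strong, and has $G-v$ being $k$-arc-strong, as both arc-strong properties are already witnessed by $H_1 \cup F$ and $(H_1 \cup F)-v$ and are monotone under adding arcs. The main obstacle is the parity-repair step: the $O$-join $F \subseteq T$ must not degrade the needed edge-connectivity, and the key observation is that Eulerianness automatically boosts any odd cut of $H_1$ up to the next even integer, supplying the $(4k+2)$-bound required for a balanced orientation to be $(2k+1)$-arc-strong.
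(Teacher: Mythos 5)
Your proposal is correct and follows essentially the same route as the paper: decompose via Theorem~\ref{thm:application:base} with $p=m=1$, repair parities of the rigid part using a forest inside the spanning tree (your $O$-join is exactly the paper's parity-matching spanning forest $F\subseteq T$), apply Lemma~\ref{lem:Eulerian-Orientation} to the resulting Eulerian $(4k+2)$-edge-connected subgraph, and finish with a smooth orientation of the residual edges.
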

\begin{proof}
{By applying   Theorem~\ref{thm:application:base} with $p= m=1$ and replacing $2k+1$ instead of $k$, the graph 
 $G$ can be decomposed into a  spanning tree $T$ and a spanning $(2k+1)$-rigid $(4k+1)$-edge-connected subgraph $G'$.
According to Corollary~\ref{cor:essentially}, for each vertex $v$ the graph $G'-v$ remains $2k$-edge-connected.
It is not hard  to check that there is a spanning forest $F$ of $T$ 
such that for each vertex $v$, $d_F(v)$ and $d_{G'}(v)$ have the same parity.
Define $H$ to be the spanning Eulerian subgraph of $G$ with $E(H)=E(G')\cup E(F)$.
Note that $H$ must automatically be $(4k+2)$-edge-connected.
By Lemma~\ref{lem:Eulerian-Orientation}, the graph $H$   has a smooth orientation such that for each vertex $v$,  the resulting directed graph $H-v$  remains $k$-arc-strong. Since  this orientation is Eulerian, it is also  $(2k+1)$-arc-strong.
Now, it enough to consider a smooth orientation for the spanning  graph $H'$    of $G$  with $E(H')=E(G)\setminus E(H)$ and 
induce whose orientation to $G$.
This can complete the proof.
}\end{proof}
%
%
%
%
%
%
%
%
%
%
%
%
%
%
%
%
%
%

\end{document}